\numberwithin{equation}{section} \hyphenation{semi-stable}
 \newcommand {\PP}{\mathbb{P}}
\DeclareMathOperator{\Soc}{soc} \def\cocoa{{\hbox{\rm C\kern-.13em
      o\kern-.07em C\kern-.13em o\kern-.15em A}}}
\newtheorem{theorem}{Theorem}[section]
\newtheorem{lemma}[theorem]{Lemma}
\newtheorem{proposition}[theorem]{Proposition}
\newtheorem{corollary}[theorem]{Corollary}
\newtheorem{conjecture}[theorem]{Conjecture} \theoremstyle{definition}
\newtheorem{definition}[theorem]{Definition}
\newtheorem{remark}[theorem]{Remark}
\newtheorem{example}[theorem]{Example}
\newtheorem{notation}[theorem]{Notation}
\definecolor{MyDarkGreen}{cmyk}{0.7,0,1,0}
\begin{document}
\title[On the Weak Lefschetz Property for height four equigenerated complete intersections ]%
{On the Weak Lefschetz Property for height four equigenerated complete intersections}
\author[M.\ Boij]{Mats Boij} \address{Department of Mathematics, KTH
  Royal Institute of Technology, S-100 44 Stockholm, Sweden}
\email{boij@kth.se}
\author[J.\ Migliore]{Juan Migliore${}^*$}
\address{ Department of Mathematics, University of Notre Dame, Notre
  Dame, IN 46556, USA} \email{migliore.1@nd.edu}

  \author[R.\
M.\ Mir\'o-Roig]{Rosa M.\ Mir\'o-Roig${}^{**}$} \address{Facultat de
  Matem\`atiques, Department d'\`Algebra i Geometria, Gran Via des les
  Corts Catalanes 585, 08007 Barcelona, Spain} \email{miro@ub.edu}
\author[U.\ Nagel]{Uwe Nagel${}^{***}$} \address{Department of
  Mathematics, University of Kentucky, 715 Patterson Office Tower,
  Lexington, KY 40506-0027, USA} \email{uwe.nagel@uky.edu}

\thanks{
  We thank  KTH, Institut Henri Poincar\'e in Paris and Centro Internazionale per la Ricerca Matematica in Trento for their hospitality and support.
  Boij was partially supported by the grant VR2013-4545;
Migliore was partially supported by grants from the Simons Foundation
  (\#208579 and \#309556);
Mir\'o-Roig was partially supported by the grant PID2019-104844GB-I00; Nagel  was partially supported by grants from the Simons Foundation (\#317096  and \#636513).  \\
\indent We thank the referee for carefully reading the manuscript and making useful comments.
} \pagestyle{plain}
\begin{abstract}
We consider the conjecture that all artinian height 4 complete intersections of forms of the same degree $d$ have the Weak Lefschetz Property (WLP). We translate this problem to one of studying the general hyperplane section of a certain smooth curve in $\mathbb P^3$, and our main tools are the Socle Lemma of Huneke and Ulrich together with a careful liaison argument. Our main results are (i) a proof that the property holds for $d=3,4$ and 5;  (ii) a partial result showing maximal rank in a non-trivial but incomplete range, cutting in half the previous unknown range; and (iii)  a proof that maximal rank holds in a different range, even without assuming that all the generators have the same degree. We furthermore conjecture that if there were to exist any height 4 complete intersection generated by forms of the same degree and failing WLP then there must exist one (not necessarily the same one) failing by exactly one (in a sense that we make precise). Based on this conjecture we outline an approach to proving WLP for all equigenerated complete intersections in four variables.  Finally, we apply our results to the Jacobian ideal of a smooth surface in~$\mathbb P^3$.
\end{abstract}
\maketitle
\section{Introduction}
Let $A = R/I$, where $R = k[x_1,\dots,x_n]$ is the polynomial ring in $n$ variables, $k$ is an algebraically closed field of characteristic zero and $I$ is a homogeneous ideal such that $R/I$ is artinian. We say that $A$ has the {\it Weak Lefschetz Property (WLP)} if, for a general linear form $L$, the homomorphism $\times L : [A]_t \rightarrow [A]_{t+1}$ has maximal rank for all $t$.

When $I = \langle F_1,
\dots, F_n \rangle$ is a complete intersection, we know that for a general choice of homogeneous polynomials $F_1,\dots, F_n$, $R/I$ has the WLP thanks to a result of \cite{stanley}, \cite{watanabe} and
\cite{RRR}.
In \cite{HMNW}, it was shown that when $n=3$, {\it every} complete
intersection $R/(F_1,F_2,F_3)$ has the WLP, and previously it was known for $n=2$.  It has been asked and conjectured by many authors whether every complete intersection has the WLP (and also whether every complete intersection has the related Strong Lefschetz Property, which we do not consider here). The first occurrence that we could find of this conjecture is in \cite{RRR} from 1991.

In four or more variables very little is known beyond the results mentioned above. In this paper we consider the case of four variables. For most of this paper we specialize to the situation where all generators have the same degree, $d$, although our last result is for arbitrary degrees. As an application, we look at the special case where $I$ is the Jacobian ideal of a smooth surface of degree $d+1$ in $\mathbb P^3$.

We begin by translating the problem to a much more geometric setting. If $I = \langle F_1,F_2,F_3,F_4\rangle$, where the generators have degree $d$, we say that $I$ is a {\it complete intersection of type $(d,d,d,d)$}. Choose two {\it general} 2-dimensional subspaces of the 4-dimensional subspace spanned by these generators. These  define two disjoint smooth complete intersection curves $C_1, C_2$ in $\mathbb P^3$, and we show that the Hartshorne-Rao module of $C = C_1 \cup C_2$ is exactly $A$, viewed now as an $R$-module. Most of the paper focuses on the intersection, $Z$, of $C$ with a general hyperplane $H$.  In the coordinate ring $S$ of $H$ let $I_Z$ be the homogeneous ideal of $Z$ and let $B = S/I_Z$. Let $\overline B$ be a general artinian reduction of $B$. We find a Hilbert function for $Z$ that is equivalent to the possession of the WLP for $A$, and indeed the set of possible Hilbert functions of $Z$, if $A$ were to fail to have WLP, is at the heart of our approach.

An important tool that we will use to analyze the Lefschetz behavior of $A$ is the beautiful Socle Lemma of Huneke and Ulrich \cite{HU}. In Section \ref{sect tools} we give the following translation of the Socle Lemma (following the approach of Huneke and Ulrich in their paper): the socle of $\overline B$ starts no later than the least degree of a form vanishing on $Z$ that does not lift to $C$. This is crucial to our work.

Another important tool is the fact that the general hyperplane section of a reduced, irreducible curve (in our case $C_1$ and $C_2$ separately) has the Uniform Position Property (UPP) (\cite{harris2} and \cite{EH}). Even though $Z$ does not have UPP (it is just the union of two sets, $Z = Z_1 \cup Z_2$, of the same size such that each has UPP), we show that the Hilbert function of $Z$ is of decreasing type in Proposition~\ref{decreasing type}.

Our first main result, Theorem \ref{d1=d2=d3=d4}, gives a new range where maximal rank must hold: multiplication by a general linear form is injective from degree $t-1$ to degree $t$ for all $t < \lfloor \frac{3d+1}{2} \rfloor $ (with a corresponding statement for surjectivity, thanks to duality). We note that when $I$ is the Jacobian ideal of a smooth surface in $\mathbb P^3$, Ilardi proved injectivity for $t \leq d$. Alzati and Re subsequently proved the same result without assuming that $I$ is a Jacobian ideal. On the other hand, the full WLP result is equivalent to injectivity for $t=2d-2$. Thus our result essentially cuts in half the range that was open.

Our next results prove that WLP holds for $d = 3$ (Proposition \ref{d=3}), $d=4$ (Theorem \ref{d=4}) and $d=5$ (Corollary \ref{d=5}), all of which are new. It was already shown in  \cite{MN-quadrics} that WLP holds when $d=2$, and beyond $d=5$ the number of possible cases grows too large to handle in a reasonable way. The case $d=3$ follows immediately from Theorem \ref{d1=d2=d3=d4}. The cases $d=4,5$, on the other hand, are proved using a new approach involving a careful series of links applied to the hyperplane section $Z$, which we view simultaneously as a series of links starting with $Z_1$ together with a series of links starting with $Z_2$.


The links that we use are very balanced, treating $Z_1$ and $Z_2$ in exactly the same way, in order to utilize symmetry. In fact, we use two ``parallel" sequences of linked schemes $Z_1 = Y_{0,1}, Y_{1,1},\ldots$ and $Z_2 = Y_{0,2}, Y_{1,2},\ldots$ such that, for each $i$, the fact that $Z_1$ and $Z_2$ are indistinguishable both geometrically and numerically is also true for $Y_{i,1}$ from $Y_{i,2}$. We refer to this idea as  the {\it Symmetry Principle}  (see (\ref{symm princ})), and it is the key to our conclusions involving the links.

In section  \ref{strategy} we outline an approach to prove that every complete intersection of type $(d,d,d,d)$ has the WLP, for $d \geq 6$. It extends the arguments we used to prove the cases $d=4$ and $d=5$. There are two steps missing to prove the  full result, which we highlight as Conjectures \ref{force exactly one} and \ref{get contra}.  The idea is to make very careful calculations involving the minimal free resolutions and the $h$-vectors of all the sets in the series of links and show how it should be possible to reach a contradiction of the Symmetry Principle, thus showing that WLP holds. We have not been able to prove these conjectures for the  general case, but we were able to get around them for the cases $d=4,5$.

In subsection \ref{jacobian ideals}, we apply our results to the case of Jacobian ideals of smooth surfaces in $\mathbb P^3$. In addition to the observations made above, we observe that every smooth hypersurface in $\mathbb P^3$ of degree 3, 4, 5 or 6 has a Jacobian ideal that has the WLP, improving the known range.  Finally, in subsection \ref{not equigenerated subsec}, we use completely different methods to prove an injectivity result for arbitrary complete intersections, removing the assumption that all generators have the same degree.  More precisely, if the generator degrees of $I$ are $d_1,d_2,d_3,d_4$ then we set $d_1 + d_2 + d_3 + d_4 = 3 \lambda + r$, $0 \leq r \leq 2$, and prove that the multiplication by a general linear form is injective for $t< \lambda$.  The restriction to the case $d_1 = \dots = d_4 = d$ is not as strong as the result in Theorem \ref{d1=d2=d3=d4}, but it is still stronger than the Alzati-Re result and in any case it omits the restriction that the ideal is equigenerated.

\section{Some tools for height four complete intersections} \label{sect tools}

Assume from now on that $R = k[x_1,x_2,x_3,x_4]$ is the polynomial ring in four variables where $k$ is an algebraically closed field of characteristic zero, and $I = \langle F_1,F_2,F_3,F_4 \rangle \subset R$ is a homogeneous complete intersection, with $\deg F_i = d$ for $i = 1,2,3,4$ (except for Proposition \ref{d1>=d2>=d3>=d4}).
If $L$ is a general linear form, it defines a hyperplane $H$. Let $S = R/\langle L \rangle \cong k[x,y,z]$ be the coordinate ring of $H$. If $L'$ is another general linear form and $Z$ is a zero-dimensional subscheme of $H$ then we set $T = S/\langle L' \rangle \cong k[x,y]$ and we recall that an artinian reduction of $S/I_Z$ has the same graded Betti numbers over $T$ as $S/I_Z$ has over $S$.

Let $I_1 = \langle F_1,F_2 \rangle$ and $I_2 = \langle F_3,F_4 \rangle$ and let $C$ be the curve defined by the  ideal $I_C =
I_1 \cap I_2$.  We make the following observations about $C$.

\begin{lemma} \label{basic facts about C}
\begin{itemize}
\item[(a)] $I_C$ is saturated and $C = C_1 \cup C_2$ in
  $\mathbb P^3 = Proj(R)$, where $C_1$ and $C_2$ are the disjoint
  complete intersections defined by $I_1$ and $I_2$ respectively.

\item[(b)] $I_C = I_1 \cdot I_2$.

\item[(c)] The minimal free resolution of $I_C$ is obtained as the tensor
  product of the Koszul resolutions of $I_1$ and $I_2$.  Hence in
  particular, the Hilbert function of $C$ is completely determined.

\item[(d)] For the Hartshorne-Rao module $M(C) = \bigoplus_{t \in \mathbb Z} H^1(\mathcal I_C (t))$ we have
  \[
  M(C) \cong A = R/\langle F_1,F_2,F_3,F_4 \rangle.
  \]
In particular, the minimal free resolution of $M(C)$ is given by the
  Koszul resolution.

\end{itemize}
\end{lemma}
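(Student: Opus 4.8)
The plan is to treat the four parts in a single logical chain, extracting everything from one structural fact: that $I_1$ and $I_2$ are \emph{Tor-independent}. For part (a), I would first observe that since $F_1,\dots,F_4$ is a regular sequence in $R$, any subsequence is again regular, so $I_1$ and $I_2$ are height-two complete intersection ideals; each defines an arithmetically Cohen--Macaulay curve $C_i$, and in particular each $I_i$ is saturated. The intersection $C_1 \cap C_2$ is cut out by $I_1 + I_2 = \langle F_1,F_2,F_3,F_4\rangle = I$, whose vanishing locus in $\mathbb{P}^3$ is empty because $A = R/I$ is artinian; hence $C_1$ and $C_2$ are disjoint, $C = C_1 \cup C_2$, and $I_C = I_1 \cap I_2$ is an intersection of saturated ideals, hence saturated.

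The engine for (b) and (c) is the computation of $\operatorname{Tor}^R_i(R/I_1, R/I_2)$. I would resolve $R/I_1$ by the Koszul complex $K_\bullet(F_1,F_2)$ and tensor with $R/I_2$; the result is the Koszul complex on the images $\bar F_1, \bar F_2$ in $R/I_2$. Because $F_1,\dots,F_4$ is a regular sequence, $\bar F_1, \bar F_2$ is a regular sequence in $R/I_2$, so this Koszul complex is acyclic in positive degrees and $\operatorname{Tor}^R_i(R/I_1, R/I_2) = 0$ for $i > 0$. Part (b) then follows from the standard identity $\operatorname{Tor}^R_1(R/I_1, R/I_2) \cong (I_1 \cap I_2)/(I_1 I_2)$, which forces $I_C = I_1 \cap I_2 = I_1 I_2$. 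For (c), a short dimension-shifting argument upgrades this to $\operatorname{Tor}^R_i(I_1, I_2) = 0$ for $i > 0$ together with the fact that the multiplication map $I_1 \otimes_R I_2 \to I_1 I_2 = I_C$ is an isomorphism; consequently the tensor product of the (truncated) Koszul resolutions of $I_1$ and $I_2$ is a resolution of $I_C$. It is minimal because every differential has entries among the $\pm F_i \in \mathfrak{m}$, and reading off the twists gives
\[
0 \to R(-4d) \to R(-3d)^4 \to R(-2d)^4 \to I_C \to 0.
\]
The Hilbert function of $C$ is then determined; equivalently, the Mayer--Vietoris sequence yields $\operatorname{HF}(R/I_C, t) = \operatorname{HF}(R/I_1, t) + \operatorname{HF}(R/I_2, t) - \operatorname{HF}(A, t)$, and all three terms on the right are Hilbert functions of complete intersections.

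For (d), I would pass to sheaves and use disjointness once more: since $C_1 \cap C_2 = \emptyset$ we have $\mathcal{I}_{C_1} + \mathcal{I}_{C_2} = \mathcal{O}_{\mathbb{P}^3}$, giving the short exact sequence
\[
0 \to \mathcal{I}_C \to \mathcal{I}_{C_1} \oplus \mathcal{I}_{C_2} \to \mathcal{O}_{\mathbb{P}^3} \to 0.
\]
Twisting by $t$ and taking cohomology, the terms $H^1(\mathcal{I}_{C_i}(t))$ vanish because each $C_i$ is arithmetically Cohen--Macaulay, so I obtain $H^1(\mathcal{I}_C(t)) \cong H^0(\mathcal{O}_{\mathbb{P}^3}(t)) / \bigl([I_1]_t + [I_2]_t\bigr) = R_t/[I]_t = [A]_t$, compatibly with the $R$-module structure. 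Summing over $t$ gives $M(C) \cong A$ as graded $R$-modules, and since $A$ is a complete intersection its minimal free resolution is the Koszul complex $K_\bullet(F_1,F_2,F_3,F_4)$.

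The one step requiring genuine care --- the main obstacle --- is the passage in (c) from the vanishing of the higher $\operatorname{Tor}$ of the quotients to the assertion that the tensor product of the ideal resolutions actually computes $I_C$ (and nothing more): one must verify both that $\operatorname{Tor}^R_i(I_1,I_2) = 0$ for $i>0$ and that $I_1 \otimes_R I_2 \to I_1 I_2$ is injective, so that the homology of the tensor-product complex is concentrated in degree zero and equal to $I_C$. Everything else is either the regular-sequence bookkeeping of the $\operatorname{Tor}$ computation or a standard cohomological calculation.
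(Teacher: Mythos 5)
Your proposal is correct, and for parts (a) and (d) it coincides with the paper's argument: disjointness from the artinian-ness of $A$, and the cohomology of the sheafified sequence $0 \to \mathcal I_C \to \mathcal I_{C_1}\oplus\mathcal I_{C_2}\to \mathcal O_{\mathbb P^3}\to 0$ combined with the ACM-ness of each $C_i$ and the identity $I_1+I_2=I$. Where you diverge is in (b) and (c): the paper simply cites Serre (Corollaire, p.~143 of \emph{Alg\`ebre locale, multiplicit\'es}) for $I_1\cap I_2 = I_1\cdot I_2$ and Martin-Deschamps--Perrin (Corollaire 7.6) for the tensor-product resolution, whereas you reprove both from scratch by establishing Tor-independence directly: tensoring the Koszul complex $K_\bullet(F_1,F_2)$ with $R/I_2$ and using that $\bar F_1,\bar F_2$ remains a regular sequence in $R/(F_3,F_4)$ (permutability of regular sequences of homogeneous elements of positive degree), then dimension-shifting to get $\operatorname{Tor}^R_i(I_1,I_2)=0$ for $i>0$ and $I_1\otimes_R I_2\cong I_1I_2$, so that the tensor product of the truncated Koszul resolutions resolves $I_C$, with minimality read off from the differentials having entries in the irrelevant ideal. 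This is the same underlying mathematics as the cited results --- Serre's Corollaire \emph{is} the Tor-vanishing statement for properly intersecting Cohen--Macaulay modules --- but your version is self-contained and makes explicit exactly which hypotheses are used (only that $F_1,\dots,F_4$ is a regular sequence), at the cost of a page of homological bookkeeping that the citations let the paper skip. You also correctly isolate the one point that genuinely needs care, namely that Tor-vanishing for the quotients must be upgraded to the statement that $I_1\otimes_R I_2\to I_1I_2$ is an isomorphism before the tensor-product complex can be declared a resolution of $I_C$ itself.
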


\begin{proof}
It is clear that $I_C$ is saturated. The curves are disjoint since $I$ is artinian. This proves (a). Part (b) follows from \cite{serre} Corollaire, page 143, since $C_1$ and $C_2$ are ACM curves in  $\mathbb P^3$.  Part (c) is \cite{MDP} Corollaire 7.6. For (d), from the exact sequence
  \[
  0 \rightarrow I_C \rightarrow I_1 \oplus I_2 \rightarrow I_1 + I_2
  \rightarrow 0,
  \]
  sheafifying and taking cohomology, we obtain the Hartshorne-Rao
  module as claimed. Note that the sheafification of $I_1 + I_2$ is $\mathcal O_{\mathbb P^3}$ since $C_1$ and $C_2$ are disjoint, and that $I_1 + I_2 = I$.
\end{proof}

\begin{remark} \label{bertini} By successive use of Bertini's theorem (see for instance \cite{kleiman}) we can assume
    that all the $F_i$ are smooth, and that both $C_1$ and $C_2$ are
    smooth and irreducible.
\end{remark}

Let $L$ be a general linear form and let $H$ be the hyperplane
defined by $L$.  Let $Z$ be the zero-dimensional scheme cut out on
$C$ by $H$.  As a subscheme in $\mathbb P^3$, $Z$ has a homogeneous
ideal that we will denote $I_Z$, and as a subscheme of $H$ it has a
homogeneous ideal $I_{Z|H}$.  Consider the exact sequence of sheaves
\[
0 \rightarrow \mathcal I_C(t-1) \stackrel{\times L}{\longrightarrow} \mathcal I_C (t) \rightarrow \mathcal I_{Z|H}(t) \rightarrow 0
\]
which yields the long exact sequence
{\footnotesize
  \begin{equation} \label{std exact} 0 \rightarrow [I_C]_{t-1}
    \stackrel{\times L}{\longrightarrow} [I_C]_t \rightarrow
    [I_{Z|H}]_t \rightarrow [A]_{t-1} \stackrel{\times
      L}{\longrightarrow} [A]_t \rightarrow H^1(\mathcal
    I_{Z|H}(t)) \rightarrow H^2(\mathcal I_C(t-1)) \rightarrow
    H^2(\mathcal I_C(t)) \rightarrow 0.
  \end{equation}}
Since the Hilbert function of $C$ is determined, the question of
whether $A$ has the WLP depends completely on understanding the
Hilbert function of $I_{Z|H}$.

%

We will make use of the following observation.

\begin{lemma} \label{inj}
For $t < 2d$, the map $\times L : [A]_{t-1} \rightarrow [A]_t$ is injective if and only if $[I_{Z|H}]_t = 0$. In particular, $A$ has the WLP if and only if $[I_{Z|H}]_{2d-2} = 0$.
\end{lemma}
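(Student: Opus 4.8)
The key is to read off the relevant information directly from the long exact sequence \eqref{std exact}. First I would isolate the four-term piece
\[
[I_C]_t \rightarrow [I_{Z|H}]_t \rightarrow [A]_{t-1} \stackrel{\times L}{\longrightarrow} [A]_t
\]
of \eqref{std exact}. The map $[I_{Z|H}]_t \to [A]_{t-1}$ is precisely the connecting map, and exactness tells me that its kernel is the image of $[I_C]_t$ and its cokernel injects into $[A]_t$. So the strategy is to understand the image of $[I_C]_t$ inside $[I_{Z|H}]_t$, since this measures the difference between forms of $H$ vanishing on $Z$ and restrictions of forms of $\PP^3$ vanishing on $C$.

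The crucial step is to show that for $t < 2d$ the map $[I_C]_t \to [I_{Z|H}]_t$ is the zero map, equivalently that every form of degree $t$ vanishing on $Z$ that \emph{lifts} to $C$ must be zero for such $t$. A degree-$t$ element of $[I_C]_t$ restricting to zero in $[I_{Z|H}]_t$ lands in the image of $\times L : [I_C]_{t-1} \to [I_C]_t$; so the image of $[I_C]_t$ in $[I_{Z|H}]_t$ is a quotient of $[I_C]_t$, and I want this image to vanish. The point is that $I_C = I_1 \cdot I_2$ by Lemma \ref{basic facts about C}(b), so the least degree of a nonzero element of $I_C$ is $2d$ (the product of two degree-$d$ generators). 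Hence $[I_C]_t = 0$ for all $t < 2d$, which forces the image in $[I_{Z|H}]_t$ to vanish in that range. Consequently, for $t < 2d$ the connecting map $[I_{Z|H}]_t \hookrightarrow [A]_{t-1}$ is \emph{injective}, and its image is exactly $\ker\bigl(\times L : [A]_{t-1} \to [A]_t\bigr)$. Therefore $\times L$ is injective in this degree if and only if that kernel vanishes, i.e.\ if and only if $[I_{Z|H}]_t = 0$. This proves the first assertion.

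For the ``in particular'' statement I would specialize to $t = 2d-2$. By the standard structure of WLP, since $A$ is the Koszul-resolved artinian complete intersection of type $(d,d,d,d)$, its Hilbert function is symmetric about its center, and the multiplication maps $\times L$ are dual to one another under this symmetry; the socle degree of $A$ is $4(d-1) = 4d-4$, so the ``middle'' map sits at $t-1 = 2d-2$. Injectivity and surjectivity of $\times L$ in the complementary degrees are equivalent by this duality, and the single potentially-failing map is the one from $[A]_{2d-2} \to [A]_{2d-1}$. Thus $A$ has WLP if and only if this middle map is injective, and since $2d-2 < 2d$, the first part of the lemma applies to give that this holds if and only if $[I_{Z|H}]_{2d-2} = 0$.

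The main obstacle I anticipate is the ``in particular'' reduction: one must justify carefully that verifying WLP reduces to the single map at the center, which rests on the symmetry of the Hilbert function of a complete intersection together with the self-duality of the maps $\times L$ (the Gorenstein/Poincaré-duality structure of $A$). The first part, by contrast, is a clean diagram chase once the vanishing $[I_C]_t = 0$ for $t < 2d$ is in hand, and that vanishing is immediate from $I_C = I_1 \cdot I_2$.
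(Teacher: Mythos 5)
The first half of your argument is correct and is exactly the paper's: the four-term piece of (\ref{std exact}) together with the vanishing $[I_C]_t=0$ for $t<2d$ (which the paper reads off Lemma \ref{basic facts about C}(c), and which you obtain equivalently from $I_C=I_1\cdot I_2$) identifies $[I_{Z|H}]_t$ with $\ker\bigl(\times L : [A]_{t-1}\to[A]_t\bigr)$ in that range.

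The ``in particular'' step, however, contains a genuine indexing error. You assert that $A$ has the WLP if and only if the map $\times L : [A]_{2d-2}\to[A]_{2d-1}$ is \emph{injective}. That map can never be injective: the Hilbert function of $A$ has its unique peak in degree $2d-2$, so $\dim[A]_{2d-2}>\dim[A]_{2d-1}$, and maximal rank there means \emph{surjectivity}. Moreover, if you then feed ``the map into degree $2d-1$'' into the first part of the lemma you are taking $t=2d-1$, which would produce the condition $[I_{Z|H}]_{2d-1}=0$, not $[I_{Z|H}]_{2d-2}=0$; so your final step does not actually connect to the claimed conclusion. The correct chain, which is what the paper runs, is: since $A$ is a complete intersection it has no socle below the top degree and is self-dual, so WLP is equivalent to injectivity of $\times L : [A]_{2d-3}\to[A]_{2d-2}$ (the dual, in the sense of Remark \ref{duality}, of the surjectivity of $[A]_{2d-2}\to[A]_{2d-1}$; see \cite{MMN}, Proposition 2.1, for the reduction to this single degree). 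One then applies the first part of the lemma with $t=2d-2$, i.e.\ to the map $[A]_{2d-3}\to[A]_{2d-2}$, which is where the condition $[I_{Z|H}]_{2d-2}=0$ comes from. The underlying idea you invoke (Gorenstein duality plus reduction to one degree) is the right one, but as written the identification of the relevant map and the sense in which it must have maximal rank are both off by one.
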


\begin{proof}
We note the following facts:

\begin{itemize}
\item[(i)] for $t<2d$ we have $\dim [I_C]_t = 0$ (Lemma \ref{basic facts about C} (c)). 

\item[(ii)] $R/I$ has no socle until the last non-zero degree (it is a complete intersection) and is self-dual as a graded module, so to prove that $R/I$ has the WLP it is enough to prove injectivity of $[A]_{2d-3} \rightarrow [A]_{2d-2}$ (see for instance \cite{MMN}, Proposition 2.1).

\end{itemize}
Then the result follows from the above exact sequence, setting $t = 2d-2$.
\end{proof}

We now recall some notation and results from \cite{HU}, which we state
in our setting.

\begin{definition}
  Let $M$ be a finitely generated graded $R$-module.  We set
  \[
  a_- (M) = \min \{ i | \ [M]_i \neq 0 \}.
  \]
\end{definition}

\begin{lemma}[\cite{HU}, Socle Lemma] \label{socle lemma} Let $M$ be a
  nonzero finitely generated graded $R$-module.  Let $L \in [R]_1$
  be a general linear form and let
  \[
  0 \rightarrow K \rightarrow M(-1) \stackrel{\times
    L}{\longrightarrow} M \rightarrow D \rightarrow 0
  \]
  be exact.  If $K \neq 0$ then $a_- (K) > a_- (\Soc (D))$.
\end{lemma}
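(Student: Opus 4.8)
The plan is to reduce, in two steps, to a statement about a finite length module, and to locate the socle of the cokernel there by duality. After a general linear change of coordinates we may take $L = x_n$; then $D = M/LM$ is annihilated by $L$, so its socle as an $R$-module coincides with its socle over $\bar R = k[x_1,\dots,x_{n-1}]$. Writing $U = (0 :_M L)$ we have $K \cong U(-1)$, so the inequality $a_-(K) > a_-(\Soc D)$ is equivalent to $a_-(\Soc D) \le a_-(U)$. It is worth stressing at the outset that the generality of $L$ is indispensable and must be used: for the special form $L = x$ on $M = k[x,y]/(x^2,xy)$ one has $U \neq 0$ while $D \cong k[y]$ has no socle at all, so no argument valid for an arbitrary zero divisor can work.

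The first reduction replaces $M$ by its finite length submodule $W = H^0_{\mathfrak m}(M)$. The key point, and the first place genericity enters, is that for general $L$ every element killed by $L$ is supported only at the irrelevant ideal: since $L$ avoids all associated primes of $M$ other than $\mathfrak m$, we have $U = (0:_M L) \subseteq W$, hence $(0:_W L) = U$, while $L$ is a nonzerodivisor on $\bar M = M/W$. Applying the snake lemma to multiplication by $L$ across $0 \to W \to M \to \bar M \to 0$ then collapses, using $(0:_{\bar M}L)=0$, to a short exact sequence
\[
0 \to W/LW \to D \to \bar M/L\bar M \to 0 .
\]
Because $\operatorname{Hom}_R(k,-)$ is left exact, $\Soc(W/LW)$ injects into $\Soc(D)$; and since $W$ is of finite length and nonzero, the quotient $W/LW$ is a nonzero finite length module and so certainly has a socle. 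It therefore suffices to prove the bound $a_-(\Soc(W/LW)) \le a_-(0:_W L)$ for the finite length module $W$.

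To handle this finite length core I would pass to the graded Matlis dual $V = W^\vee = \operatorname{Hom}_k(W,k)$, on which multiplication by $L$ is again multiplication by $L$ but with kernel and cokernel interchanged, and under which socles correspond to minimal generators and lowest degrees to highest degrees. In these terms the desired bound becomes the assertion that the largest degree of a minimal generator of $(0:_V L)$ is at least the top nonzero degree of $V/LV$. One then tries to produce such a generator in the top degree of $V$, where $(0:_V L)$ agrees with $V$ itself, and to show that it is not a combination of products of $\mathfrak m$ with lower torsion; here the maximal rank of the multiplications $\times L$ in each degree, valid because $L$ is general, is exactly what one feeds in. An alternative to duality is a direct induction on the number of variables: reduce $W$ modulo a second general linear form, apply the statement in $n-1$ variables, and lift the resulting socle element back while controlling the degree shift.

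The step I expect to be the genuine obstacle is precisely this last degree comparison in the finite length (or dual) setting: forcing a socle class of $W/LW$ to appear no later than the bottom degree $a_-(0:_W L)$ of the $L$-torsion. This is the only point where generality of $L$ is truly essential rather than merely convenient, and it is subtle because the socle of the quotient need not be carried by the torsion itself. Indeed, for $M = k[x,y]/(x^2,y^3)$ with $L$ general the socle of $D$ appears one degree \emph{below} the bottom of $U$, so the socle class cannot simply be read off from a minimal annihilator of $L$; pinning down, via the generic maximal rank of $\times L$, that such a class must nevertheless exist by degree $a_-(U)$ is the heart of the matter.
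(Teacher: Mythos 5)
First, a point of orientation: the paper offers no proof of this statement at all --- it is quoted verbatim from Huneke--Ulrich \cite{HU}, where it is the central technical result and the proof is a genuinely involved piece of work. So your attempt cannot be checked against an argument in the text; it has to stand on its own, and it does not yet do so. The parts you do carry out are correct: the identification $K\cong (0:_M L)(-1)$ and the translation of the claim into $a_-(\Soc D)\le a_-\bigl((0:_M L)\bigr)$; the reduction to the finite length module $W=H^0_{\mathfrak m}(M)$ via the snake lemma, using that a general $L$ avoids all associated primes other than $\mathfrak m$, so that $(0:_M L)\subseteq W$ and $L$ is a nonzerodivisor on $M/W$; and the Matlis-dual reformulation. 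These are standard and sound.

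The genuine gap is that after these reductions you have merely restated the lemma for finite length modules, which is its entire content, and you say explicitly that you do not know how to finish. Moreover, the one concrete mechanism you propose to feed in --- ``the maximal rank of the multiplications $\times L$ in each degree, valid because $L$ is general'' --- is false. Maximal rank of $\times L$ in every degree for a general linear form is precisely the Weak Lefschetz Property, which fails for many finite length graded modules (for instance $k[x,y,z]/(x^3,y^3,z^3,xyz)$; and the fact that WLP for the complete intersections studied in this very paper is an open problem shows it is never available for free). Any argument for the finite length case that routes through that claim collapses, so the ``heart of the matter'' you correctly identify remains entirely open in your sketch. The published proof in \cite{HU} does not rest on any such maximal-rank statement; it uses the linear form with indeterminate coefficients over the extension field $k(z_1,\dots,z_n)$ and a substantially more delicate analysis. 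Some input of that kind is needed exactly where your proposal stops.
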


\begin{remark} \label{soc of h1} Assume the following: $S = k[x,y,z]$, $Z$ is a
  zero-dimensional subscheme of $\mathbb P^2 = H$ with homogeneous ideal
  $I_Z \subset S$, $B = S/I_Z$, $\bar B$ is an artinian reduction of
  $B$ by a linear form, and
  \[
  0 \rightarrow \bigoplus_{i=1}^{b_2} S(-n_{2,i}) \rightarrow
  \bigoplus_{i=1}^{b_1} S(-n_{1,i}) \rightarrow S \rightarrow B
  \rightarrow 0
  \]
  is the minimal free resolution.  In \cite{HU} it is also pointed out
  that then
  \[
  \Soc (H^1_\ast(\mathcal I_Z)) = \bigoplus_{i=1}^{b_2} k(-n_{2,i} +3) =
  \Soc (\bar B)(1).
\]
\end{remark}

Still following the work of \cite{HU}, let $M = A = R/I$ and let $C$
be as above.  From (\ref{std exact}) we see that $\Soc (D) \subset
\Soc (H^1_\ast(\mathcal I_{Z|H}))$.  We thus obtain
\[
a_- (K) > a_- (\Soc (D)) \geq a_- (\Soc (H^1_\ast (\mathcal I_{Z|H})))
\geq a_- (\Soc (\bar B)) -1;
\]
that is
\begin{equation} \label{hu ineq} a_-(K) \geq a_- (\Soc (\bar B)).
\end{equation}

\begin{remark} \label{trans socle lemma}
Notice that $K$ represents the forms in $I_{Z|H}$ that do not lift to $I_C$. Thus one way of phrasing the result of the Socle Lemma, which we will use, is that

\begin{quotation}

{\it the socle of $\bar B$ starts no later than the least degree of a form vanishing on $Z$ that does not lift to $C$.}

\end{quotation}

\noindent Since the latter degree can sometimes be read from the Hilbert function of $\bar B$, this can be used to force socle elements in $\bar B$.
\end{remark}

  For a finite (reduced) set
of points $Z$ in projective space $\mathbb P^n$, the first difference
of the Hilbert function is a finite sequence of positive integers,
also known as the {\em $h$-vector} of $Z$.  When $Z$ is the general
hyperplane section of   a reduced, irreducible curve $C$, it was shown by
Harris (cf. \cite{harris2}, \cite{EH}) that $Z$ has the so-called {\em
  uniform position property} (UPP); \label{UPP def} that is, any two subsets of $Z$ of
the same cardinality have the same Hilbert function.  If the
irreducible curve $C$ lies in $\mathbb P^3$, we may view its general
hyperplane section $Z$ as lying in a plane $\mathbb P^2$.  Harris
notes that in this case UPP implies that the $h$-vector of $Z$ is of
{\em decreasing type}, meaning that once the values experience a
strict decrease, they are strictly decreasing until they reach zero.

The following useful result of Davis \cite{davis} should be viewed as
an extension of this result.  Recall that the \emph{regularity}, or
\emph{Castelnuovo-Mumford regularity} of $Z$ agrees with the top
degree of the $h$-vector of $Z$.

\begin{theorem}[Davis \cite{davis}] \label{davis thm} Let $\{ h_i \ |
  \ 0 \leq i \leq k \}$ be the $h$-vector of a reduced zero-dimensional subscheme $Z$ in $\mathbb P^2$.  Suppose that
  $h_{t-1} = h_{t} > 0$ for some $t \leq k$.  Then
  \begin{enumerate}
  \item The elements of the homogeneous components $[I_Z]_{t-1}$ and $[I_Z]_t$ all
    have a common factor, $F$, of degree equal to $h_t$, which thus is a common factor for all components
    in degree $< t-1$ as well.
  \item $F$ is reduced.
  \item Let $Z_1$ be the subset of $Z$ consisting of all the points
    lying on the curve $F$.  Then $Z_1$ has $h$-vector $\{ g_i \ | \ 0
    \leq i \leq k \}$ where
    \[
    g_i = \min \{ h_i, h_t \}.
    \]
    In particular, the regularity of $Z_1$ is the same as that of $Z$.
  \item Let $Z_2$ be the subset of $Z$ consisting of all points of $Z$
    that do not lie on $F$.  Then the $h$-vector of $Z_2$ is $\{ f_i \
    | \ 0 \leq i \leq m \}$ where
    \[
    \begin{array}{rcl}
      m & = &  (t-2) - h_t \\
      f_i & = & h_{h_t +i} - g_{h_t+i}
    \end{array}
    \]
  \end{enumerate}
\end{theorem}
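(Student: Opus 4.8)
The plan is to pass to the two-variable ring $T = k[x,y]$, where ideals are governed by binary-form arithmetic, extract a common factor there, and then lift it to $S = k[x,y,z]$. Let $\bar B = T/\bar I$ be the general artinian reduction of $B = S/I_Z$, so that $\dim_k [\bar B]_i = h_i$ and $\bar I$ is the image of $I_Z$ under $S \to T = S/\langle L'\rangle$. Writing $c = h_{t-1} = h_t$ and using $\dim_k T_j = j+1$, the hypothesis becomes $\dim_k [\bar I]_{t-1} = t - c$ and $\dim_k [\bar I]_t = (t+1) - c = \dim_k[\bar I]_{t-1} + 1$. I would then invoke the classical growth lemma for binary forms: for $0 \neq V \subsetneq T_j$ one has $\dim_k (T_1 V) \geq \dim_k V + 1$, with equality exactly when the elements of $V$ share a common factor of degree $j + 1 - \dim_k V$ and $V$ equals all multiples of it. Since $T_1 [\bar I]_{t-1} \subseteq [\bar I]_t$ and the two dimensions differ by one, equality must hold, so $[\bar I]_{t-1}$ and $[\bar I]_t = T_1[\bar I]_{t-1}$ share a common factor $\bar P$ of degree exactly $c = h_t$.

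To lift $\bar P$ to $S$ I would use a base-locus argument. If the forms of $[I_Z]_{t-1}$ had no common factor in $S$, their common zero locus would be finite, since any curve in it would divide every form; hence a general line would miss it and the restrictions spanning $[\bar I]_{t-1}$ would be coprime, contradicting divisibility by $\bar P$. Thus $[I_Z]_{t-1}$ has a nontrivial common factor; applying the same argument to the quotients by their gcd shows $F := \gcd_S([I_Z]_{t-1})$ restricts to $\bar P$, forcing $\deg F = c = h_t$ and $\bar F = \bar P$. The identical argument gives $F \mid [I_Z]_t$, and divisibility in degrees $< t-1$ is then automatic: for $g \in [I_Z]_j$ with $j < t-1$ every element of $S_{t-1-j}\, g \subset [I_Z]_{t-1}$ is divisible by $F$, and since $S$ is a UFD and the monomials of positive degree have trivial gcd, $F \mid g$. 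This proves (1). For (2) I would derive reducedness of $F$ from the reducedness of $Z$: a repeated component of $F$ would place $Z_1$ on a reduced curve of degree $< h_t$, producing a form in $I_{Z_1}$ of too low a degree and contradicting the $h$-vector computed below.

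For (3) and (4) I would argue by residuation, the same linkage bookkeeping used throughout the paper. With $Z = Z_1 \sqcup Z_2$ (disjoint since $Z$ is reduced) and $I_{Z_2} = (I_Z : F)$, multiplication by $F$ gives the exact sequence $0 \to (S/(I_Z:F))(-h_t) \xrightarrow{\cdot F} S/I_Z \to S/(I_Z + \langle F \rangle) \to 0$, whose Hilbert functions yield, after first differences, the relation $h_i = g_i + f_{i - h_t}$; one finds that the $h$-vector of $Z_2$ is supported in $0 \leq i \leq m = (t-2) - h_t$, so that $f_i = h_{h_t + i} - g_{h_t + i}$. Finally $g_i = \min\{h_i, h_t\}$ because $Z_1$ lies on the degree-$h_t$ curve $F$, which caps its $h$-vector at $h_t$, while beyond degree $t$ the entire strictly decreasing tail of $h$ belongs to $Z_1$; in particular $Z_1$ retains the top degree $k$, hence the regularity, of $Z$. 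I expect the main obstacle to be pinning down this last decomposition rigorously, namely the reducedness of $F$ together with the precise claims that the flat value $h_t$ caps $Z_1$ and that nothing of $Z_2$ survives past degree $m$, since this is exactly where the reducedness of $Z$ and the equality case of the growth lemma must be used in tandem; by contrast the existence of the common factor is comparatively clean.
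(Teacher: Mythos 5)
A preliminary remark on the comparison: the paper states this result as a quotation from Davis's article and supplies no proof of its own, so there is no internal argument to measure yours against; what follows is an assessment of your proposal on its own terms.

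Your part (1) is the standard argument and is essentially complete: the growth bound $\dim_k(T_1V)\geq \dim_k V+1$ for $0\neq V\subsetneq T_j$ in $k[x,y]$, its equality case, and the lifting of the common factor through a general linear section are all correct. (You should dispose separately of the degenerate case $h_{t-1}=t$, where $[\bar I]_{t-1}=0$ and the lemma as you state it does not apply; the conclusion is then immediate since $[I_Z]_t$ is one-dimensional.) The genuine gap is in parts (2)--(4), and you partly flag it yourself. The residuation sequence $0\to \bigl(S/(I_Z:F)\bigr)(-h_t)\to S/I_Z\to S/(I_Z+\langle F\rangle)\to 0$ is the right tool, but by itself it does not yield the stated formulas: the third term involves $I_Z+\langle F\rangle$, which need not be saturated, so its Hilbert function is not a priori that of $Z_1$; and nothing in your write-up bounds the regularity of $Z_2$, which is precisely the content of $m=(t-2)-h_t$. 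The missing step is to upgrade divisibility to equality: since $I_Z:F=I_{Z_2}$, part (1) gives $[I_Z]_j=F\cdot[I_{Z_2}]_{j-c}$ for every $j\leq t$ (where $c=h_t$), so the Hilbert function of $Z_2$ in degrees $\leq t-c$ is read off from that of $Z$; one computes $\Delta h_{Z_2}(j-c)=h_j-c$ for $c\leq j\leq t$, and the hypothesis $h_{t-1}=h_t$ forces this to vanish in degrees $t-1-c$ and $t-c$, hence in all later degrees (an artinian quotient of $k[x,y]$ that vanishes in one degree vanishes thereafter). This proves (4); then (3) follows from $\deg Z_1=\deg Z-\deg Z_2$ together with the pointwise cap $g_i\leq\min\{i+1,c\}$ coming from $Z_1\subset V(F)$ and the observation, via Macaulay's bound in two variables, that $h_i=i+1$ for $i<c$ and $h_i\geq c$ for $c\leq i\leq t$. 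Only then is (2) non-circular: the $h$-vector of $Z_1$ attains the value $c$, so $Z_1$ lies on no curve of degree $<c$, ruling out a repeated component of $F$. As written, your proposal asserts rather than derives these quantitative statements, so it establishes (1) but not the remaining parts.
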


\begin{example}
  Suppose that $Z$ has $h$-vector $(1,2,3,4,5,6,7,8,5,3,3,2,1)$.  Then
  $t = 10$, $h_t = 3$, and we compute the $h$-vectors of $Z_1$ and
  $Z_2$ as follows (note that the $h$-vector for $Z_2$ displayed below is shifted by 3).
  \[
  \begin{array}{c|cccccccccccccccccc}
    Z 	& 1 	& 2 	& 3 	& 4 	& 5 	& 6 	& 7 	& 8 	& 5 	& 3 	& 3 	& 2 	& 1 \\
    Z_1 	& 1 	& 2 	& 3 	& 3 	& 3 	& 3 	& 3 	& 3 	& 3 	& 3 	& 3 	& 2 	& 1 \\ \hline
    Z_2 	& 	&	&	& 1 	& 2 	& 3 	& 4 	& 5 	& 2
  \end{array}
  \]
  \noindent In particular, $Z$ contains a subset, $Z_1$, of $33$
  points on a reduced cubic curve and a subset, $Z_2$, of $17$ points
  that do not lie on the cubic.
\end{example}


\section{Measuring failure of WLP} \label{measuring failure}

\begin{notation} \label{def CI(d,d,d,d)}
Let $d \geq 2$ be an integer. We denote by $CI(d,d,d,d)$ the space of ideals in $R = k[x_1,x_2,x_3,x_4]$ generated by a regular sequence of four forms of degree $d$. We view $CI(d,d,d,d)$ as a dense open subset of the Grassmannian $Gr(4,N)$, where $N = \binom{d+3}{3}$.
\end{notation}

Let $I \in CI(d,d,d,d)$. Let $A=R/I$ and let $h_{A}$ be the Hilbert function of $A$. We note that the socle degree of $A$ (the last degree in which $h_{A}$ is non-zero) is $4d-4$, that $h_{A}$ is symmetric, and that the maximum value of $h_{A}$ occurs exactly in degree $2d-2$. Also, $A$ is self-dual (after a twist).

\begin{remark}
We will use Hilbert functions to measure the failure of $A$ to satisfy the WLP.  Note first that all $I \in CI(d,d,d,d)$ give rise to algebras with the same Hilbert function, since their Betti diagrams come from the Koszul sequence and so are identical. Furthermore, since $4d-4$ is even and all the generators have the same degree $d$, the Hilbert function in degree $2d-2$ is strictly greater than the Hilbert function in any other degree.
\end{remark}

\begin{remark} \label{duality}
Since $A=R/I$ is, in particular, a Gorenstein algebra, it is self-dual as a graded module, and as noted above its last non-zero component is in degree $4d-4$. Hence  for any $L \in [R]_1$ and any $i \geq 0$, by duality the rank of the homomorphism $\times L : [A]_{2d-3-i} \rightarrow [A]_{2d-2-i}$ is the same as the rank of the corresponding homomorphism from degree $2d-2+i$ to degree $2d-1+i$.
\end{remark}

The next lemma shows that in order to check whether $A=R/I$ has the WLP, it is enough to check surjectivity in just one degree (and injectivity for the first half follows automatically).

\begin{lemma} \label{one spot}
$A$ has the WLP if and only if $\times L : [A]_{2d-2} \rightarrow [A]_{2d-1}$ is surjective.
\end{lemma}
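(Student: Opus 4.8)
The plan is to exploit the very rigid shape of the Hilbert function $h_A$ recorded above: it is symmetric about degree $2d-2$, where it attains its \emph{strict} maximum, and the socle degree is $4d-4$. Consequently, for the map $\mu_t := (\times L)\colon [A]_t \to [A]_{t+1}$, having maximal rank means being injective when $t \le 2d-3$ (the non-decreasing range, since $t+1 \le 2d-2$ forces $h_A(t) \le h_A(t+1)$) and being surjective when $t \ge 2d-2$ (the non-increasing range). In particular, at $t = 2d-2$ the strictness of the peak gives $h_A(2d-2) > h_A(2d-1)$, so maximal rank there is exactly surjectivity of $\mu_{2d-2}$. The forward implication is then immediate: if $A$ has the WLP, then $\mu_{2d-2}$ has maximal rank, which is surjectivity.

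For the converse, assume $\mu_{2d-2}$ is surjective; I would establish maximal rank in every degree in two steps. First, surjectivity propagates upward: the cokernel of $\mu_t$ in degree $t+1$ is $[A/LA]_{t+1}$, and $A/LA = R/(I + \langle L\rangle)$ is a standard graded algebra, so $[A/LA]_{s+1} = [R]_1 \cdot [A/LA]_s$. Hence once $[A/LA]_{2d-1} = 0$ we get $[A/LA]_s = 0$ for all $s \ge 2d-1$, that is, $\mu_t$ is surjective for every $t \ge 2d-2$. Since this is the non-increasing range, surjective means maximal rank there.

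Second, I would transfer this conclusion to the lower half using self-duality. Because $A$ is Gorenstein with socle degree $e = 4d-4$, multiplicativity of the perfect pairing $[A]_j \times [A]_{e-j} \to [A]_e \cong k$ shows that $\mu_j$ and $\mu_{e-j-1} = \mu_{4d-5-j}$ are mutually transpose, hence have equal rank; this is precisely the content of Remark \ref{duality}. Fix $t \le 2d-3$ and set $t' = 4d-5-t \ge 2d-2$. By Step 1 the map $\mu_{t'}$ is surjective, so $\operatorname{rank}(\mu_{t'}) = h_A(t'+1) = h_A(4d-4-t) = h_A(t)$, the last equality by symmetry of $h_A$. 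Since $\operatorname{rank}(\mu_t) = \operatorname{rank}(\mu_{t'}) = h_A(t) = \dim [A]_t$, the map $\mu_t$ is injective, and as $t \le 2d-3$ lies in the non-decreasing range this is maximal rank. Combining the two steps yields maximal rank for all $t$, i.e. the WLP.

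The only delicate points are bookkeeping rather than substance: one must verify that the peak at $2d-2$ is strict, so that $\mu_{2d-2}$ is genuinely of surjective type and so that surjectivity there corresponds under duality to injectivity of $\mu_{2d-3}$; and one must check that the dimension identities $h_A(t) = h_A(4d-4-t)$ correctly match the injectivity condition on $\mu_t$ with the surjectivity condition on its transpose $\mu_{4d-5-t}$. Both follow at once from the symmetry and strict unimodality of $h_A$ noted above, so I do not expect a genuine obstacle; the statement ultimately reduces to the standard principle that for a self-dual algebra whose Hilbert function is unimodal with a strict central peak, checking surjectivity at the single middle step suffices (cf. \cite{MMN}, Proposition 2.1).
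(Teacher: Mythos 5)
Your proof is correct and follows essentially the same route as the paper's: the forward direction is immediate from the strict peak at degree $2d-2$, the reverse direction propagates surjectivity upward via the standard-gradedness of $A/LA$, and injectivity in the lower half is recovered from Gorenstein duality (the paper leaves this last step implicit, deferring to Remark \ref{duality} and the surrounding discussion, whereas you spell it out). No gaps.
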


\begin{proof}
Since the peak of the Hilbert function occurs in degree $2d-2$, one direction is trivial. The reverse implication follows from the exact sequence
\[
\cdots\rightarrow [A]_{t-1} \stackrel{\times L}{\longrightarrow} [A]_t \rightarrow [A/(L)]_t \rightarrow 0
\]
and the fact that once the cokernel is zero in one degree, it is zero in all subsequent degrees.
\end{proof}

Next we show that if $A$ fails to be surjective by the smallest amount possible in the degree mentioned in Lemma \ref{one spot} then it must actually be surjective in all subsequent degrees (and by duality, injectivity only fails in one degree in the first half, and it is by the smallest possible amount). 

\begin{lemma} \label{others have max rk}
 If $\times L : [A]_{2d-2} \rightarrow [A]_{2d-1}$ has a one-dimensional cokernel then,   in all subsequent degrees, $\times L$ is surjective. Consequently, failure of injectivity in the first half is also by the smallest amount possible.
\end{lemma}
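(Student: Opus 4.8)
```latex
\emph{Proof proposal.} The plan is to reduce the statement to the behavior of the
cokernel of $\times L$ in the single degree $2d-2$, and then use the exact
sequence from Lemma \ref{one spot} together with the self-duality of $A$ to
propagate control of the ranks through all degrees. First I would set
$\operatorname{coker}_t = [A/(L)]_t$, so that the hypothesis says
$\dim \operatorname{coker}_{2d-1} = 1$. Since $A/(L)$ is a standard graded
algebra, multiplication by the image $\bar x$ of a generic variable gives
surjections $[A/(L)]_{t} \twoheadrightarrow [A/(L)]_{t+1}$ once $A/(L)$ is
generated in degree $\le t$; more simply, if $\operatorname{coker}_{t}$ is
generated as an $(R/(L))$-module by elements of degree $\le 2d-1$, then the
cokernels in subsequent degrees are quotients of $\operatorname{coker}_{2d-1}
\otimes [S]_{\bullet}$, so a one-dimensional $\operatorname{coker}_{2d-1}$ forces
$\dim \operatorname{coker}_{t} \le 1$ for all $t \ge 2d-1$.

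The key step is to show that in fact $\dim \operatorname{coker}_{t} = 0$ for
$t \ge 2d$, i.e.\ that the single surviving cokernel element in degree $2d-1$
does not persist. For this I would compare Hilbert functions directly: from the
sequence
\[
[A]_{t-1} \stackrel{\times L}{\longrightarrow} [A]_t \rightarrow
[A/(L)]_t \rightarrow 0
\]
we read $\dim \operatorname{coker}_t = h_A(t) - \operatorname{rank}(\times L)_t$.
By Remark \ref{duality}, the rank of $\times L : [A]_{2d-2+i} \to [A]_{2d-1+i}$
equals the rank of $\times L : [A]_{2d-3-i} \to [A]_{2d-2-i}$. The hypothesis
that the cokernel in degree $2d-1$ has dimension exactly $1$ means the rank
there is $h_A(2d-1) - 1$, and since the peak of $h_A$ is at $2d-2$ with
$h_A(2d-2) = h_A(2d-1) + (\text{positive})$, one checks that a rank deficiency of
exactly $1$ at the peak is the smallest possible failure of surjectivity. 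The
main arithmetic point is that the strict unimodality of $h_A$ (guaranteed because
$4d-4$ is even and the generators are equigenerated) makes the surjectivity
defect in higher degrees bounded above by the defect at the peak, which is $1$,
and then forces it down to $0$.

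For the propagation to $0$, I would invoke the standard fact used in
Lemma \ref{one spot}: once the cokernel of $\times L$ vanishes in one degree it
vanishes in all subsequent degrees, since $A/(L)$ is generated in the degrees
where it is nonzero and a generic linear form acts. Thus it suffices to show the
defect cannot stabilize at $1$. Here I would argue that if
$\dim \operatorname{coker}_{t} = 1$ for all $t \ge 2d-1$, the surviving classes
would assemble into a nonzero submodule of $A/(L)$ isomorphic to a copy of
$S/\mathfrak{m}$-tower that is never killed, contradicting that $A/(L)$ is
artinian with socle concentrated compatibly with the symmetry of $h_A$.
Concretely, the one-dimensional cokernel in the top half corresponds by duality
to a one-dimensional kernel in the bottom half, so injectivity fails in exactly
one degree by exactly one dimension; this is the ``smallest amount possible''
assertion, and it follows formally once the top-half statement is established.

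The hard part will be the last step: ruling out a persistent one-dimensional
cokernel rather than one that decays to zero. This is where the equigenerated,
self-dual structure of the complete intersection is essential --- a generic
artinian quotient could in principle sustain a one-dimensional cokernel
indefinitely, so I expect the real content to lie in showing that the single
cokernel class in degree $2d-1$ is annihilated by the maximal ideal of $A/(L)$
(equivalently, lies in the socle), forcing it to disappear in degree $2d$ rather
than generate a tower. Once that socle placement is pinned down via the duality
of Remark \ref{duality} and the unimodality of $h_A$, surjectivity in all
degrees $\ge 2d$ follows, and the dual injectivity statement is immediate.
```
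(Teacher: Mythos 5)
Your proposal correctly locates the crux --- ruling out $h_{A/(L)}(2d)=1$ rather than merely showing the cokernel eventually dies --- but it does not close that step, and the surrounding reductions do not close it either. Two concrete problems. First, the bound $\dim[A/(L)]_t\le 1$ for $t\ge 2d$ does not follow from your ``quotient of $\operatorname{coker}_{2d-1}\otimes[S]_\bullet$'' argument, which only gives $\dim[A/(L)]_t\le\dim[A/(L)]_{2d-1}\cdot\dim[R]_{t-2d+1}$; you need Macaulay's growth bound (here $1^{\langle t\rangle}=1$), which is exactly what the paper invokes. Second, and more seriously, your proposed contradiction for a ``persistent'' one-dimensional cokernel --- that the surviving classes would assemble into a tower incompatible with $A/(L)$ being artinian --- at best rules out an \emph{infinite} tail of $1$'s; it says nothing against $h_{A/(L)}$ having the shape $(\dots,1,1,0,\dots)$ with the extra $1$ exactly in degree $2d$, which is precisely the case the lemma must exclude. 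Neither the duality of Remark \ref{duality} (which only equates $\dim\operatorname{coker}$ in degree $2d$ with the kernel dimension from degree $2d-4$ to $2d-3$, both a priori unknown) nor unimodality of $h_A$ places the degree-$(2d-1)$ cokernel class in the socle of $A/(L)$, and you explicitly leave that claim as an expectation rather than a proof.

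The paper's actual argument supplies the missing ingredient: $h_{A/(L)}(2d-1)=h_{A/(L)}(2d)=1$ is \emph{maximal growth} in Macaulay's sense, so the ideal $J$ generated by $(I,L)_{\le 2d}$ has, by the Gotzmann Persistence Theorem, Hilbert function identically $1$ in all degrees $\ge 2d-1$; in that range $J$ is therefore the saturated ideal of a single point. Since $I$ is generated in degree $d<2d$, this forces that point to be a common zero of $F_1,\dots,F_4$, contradicting that $I$ is artinian. Thus even a tail of length two already yields a contradiction, with no need to place the cokernel class in the socle directly. You should replace your final heuristic paragraph with this Macaulay--Gotzmann step (or an equivalent argument); as written the proof has a genuine gap at exactly the point you flag as ``the hard part.'' The closing duality remark for the injectivity statement in the first half is fine and matches the paper.
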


\begin{proof}
Looking at the Hilbert function of the cokernel of $\times L : A (-1) \rightarrow A$, namely $A/(L)$, we have assumed that $h_{A/(L)}(2d-1) = 1$. Then Macaulay's theorem (\cite{BH} Theorem 4.2.10 (c)) gives that $h_{A/(L)}(t) \leq 1$ for all $t \geq 2d$. If $h_{A/(L)}(2d) = 1$ then this is maximal growth from degree $2d-1$ to degree $2d$. 

We claim that this prevents $I$ from being artinian in degree $2d$. Indeed, the fact that $A/(L)$ has Hilbert function with maximal growth from degree $2d-1$ to degree $2d$ forces the ideal $(I,L)$ to have no minimal generator in degree $2d$, since otherwise removing such a generator gives an ideal $J$ whose Hilbert function exceeds Macaulay's bound. Let $J$ be the ideal generated by $(I,L)_{\leq 2d}$. Then by the Gotzmann Persistence Theorem (\cite{BH} Theorem 4.3.3), the value of the Hilbert function of $R/J$ in all degrees $\geq 2d-1$ remains 1, so in degrees $\geq 2d-1$, $J$ is the saturated ideal of a single point, hence not artinian. Thus also $(I, L)$ fails to be artinian in degrees $2d-1$ and $2d$, and so in particular it is not artinian in degree $2d$. The second part follows by duality.
\end{proof}

This motivates the following definition.

\begin{definition}
Let $I \in CI(d,d,d,d)$ and let $L$ be a general linear form.

\begin{itemize}

\item We say that $A$ {\it fails WLP by one} if   $\times L : [A]_{2d-2} \rightarrow [A]_{2d-1}$ has a one-dimensional cokernel.

\item We say that $A$ {\it fails WLP by more than one} if $\times L : [A]_{2d-2} \rightarrow [A]_{2d-1}$ has a cokernel that is more than one-dimensional.

\end{itemize}
\end{definition}

\begin{remark}
These properties can be read immediately from the Hilbert function of $A/(L)$:

\begin{itemize}
\item  if $h_{A/(L)} (2d-1) = 0$ then $A$ has WLP.

\item If $h_{A/(L)} (2d-1) = 1$ then $A$ fails WLP by one.

\item If $h_{A/(L)} (2d-1) \geq 2$ then $A$ fails by more than one.
\end{itemize}
\end{remark}

We do not yet know that for all $I \in CI(d,d,d,d)$, $A$ has the WLP. Nevertheless, for fixed $I$ there is an open subset of linear forms for which the cokernel of $\times L$ has the smallest Hilbert function in degree $2d-1$ among all $L \in (\mathbb P^3)^*$.

\begin{lemma}
Fix $I \in CI(d,d,d,d)$. Let
\[
m = \min_{L \in [R]_1} \{ h_{A/(L)}(2d-1) \}.
\]
(Note $m=0$ if and only if $A$ has the WLP.) Let
\[
U_I = \{ L \in (\mathbb P^3)^* \ | \ h_{A/(L)} (2d-1) = m \} .
\]
Then $U_I$ is open in $(\mathbb P^3)^*$.
\end{lemma}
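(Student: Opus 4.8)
The plan is to recognize this as an instance of the upper semicontinuity of the rank of a family of linear maps, where the family is parametrized linearly by the coefficients of $L$. The key point is that $A = R/I$ is \emph{fixed}; only $L$ varies.

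First I would fix, once and for all, $k$-bases of the finite-dimensional vector spaces $[A]_{2d-2}$ and $[A]_{2d-1}$; this is legitimate precisely because $A$ does not depend on $L$. Writing $L = a_1 x_1 + a_2 x_2 + a_3 x_3 + a_4 x_4$, the multiplication homomorphism $\phi_L = \times L : [A]_{2d-2} \rightarrow [A]_{2d-1}$ is then represented by a matrix $M(a)$ whose entries are homogeneous linear forms in $a = (a_1,a_2,a_3,a_4)$. Indeed, each entry records a coordinate, in the chosen basis of $[A]_{2d-1}$, of the image of a basis element of $[A]_{2d-2}$ under multiplication by $L$, and this image is manifestly linear in the $a_i$.

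Next I would translate the Hilbert-function condition into a rank condition. From the exact sequence $[A]_{2d-2} \xrightarrow{\times L} [A]_{2d-1} \rightarrow [A/(L)]_{2d-1} \rightarrow 0$ we get
\[
h_{A/(L)}(2d-1) = \dim_k [A]_{2d-1} - \operatorname{rank} \phi_L,
\]
and since $\dim_k [A]_{2d-1}$ is a constant independent of $L$ (the Hilbert function of $A$ is fixed), minimizing the left-hand side is the same as maximizing $\operatorname{rank}\phi_L$. Setting $r = \max_L \operatorname{rank}\phi_L = \dim_k [A]_{2d-1} - m$, we obtain
\[
U_I = \{\, L \mid \operatorname{rank}\phi_L = r \,\} = \{\, L \mid \operatorname{rank}\phi_L \geq r \,\},
\]
the last equality holding because $r$ is the maximal rank attained, so no $L$ can exceed it.

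Finally I would invoke the standard determinantal description of rank loci. The condition $\operatorname{rank} M(a) \geq r$ holds if and only if at least one of the $r \times r$ minors of $M(a)$ is nonzero. Each such minor is a homogeneous polynomial in $a_1,\dots,a_4$, being a signed sum of products of linear entries, so the locus where it is nonzero is the complement of a hypersurface in $(\mathbb P^3)^*$, hence open; the union over all $r \times r$ minors is therefore open, and by the displayed identity it is exactly $U_I$. There is no genuinely hard step here; the only points demanding care are (i) fixing bases independent of $L$ so that the entries of $M(a)$ are honestly polynomial in the coordinates of $L$, and (ii) using that $r$ is the \emph{maximal} value of the rank, so that the open ``rank at least $r$'' locus coincides with $U_I$ rather than overshooting it.
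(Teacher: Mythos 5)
Your proposal is correct and follows essentially the same route as the paper: fix bases so that $\times L$ is given by a matrix of linear forms in the coordinates of $L$, and observe that $U_I$ is the complement of the vanishing locus of the $r\times r$ minors for $r$ the maximal rank attained. Your additional remark making explicit the identity $h_{A/(L)}(2d-1) = \dim_k [A]_{2d-1} - \operatorname{rank}\phi_L$ is a harmless (and slightly more careful) elaboration of what the paper leaves implicit.
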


\begin{proof}
Fix a basis for $[A]_{2d-2}$ and one for $[A]_{2d-1}$, and say the two dimensions are $s_1$ and $s_2$. Then for  a  linear form $L = a_1 x_1 + a_2 x_2 + a_3 x_3 + a_4 x_4$, the multiplication
\[
\times L : [A]_{2d-2} \rightarrow [A]_{2d-1}
\]
can be represented by a $s_2 \times s_1$ matrix $M$ of linear forms in the dual variables $a_1, a_2, a_3, a_4$. The  maximal minors of $M$ may or may not all be zero. Let $s$ be the largest integer for which there is an $s \times s$ minor of $M$ that is not identically zero. The vanishing locus of the ideal generated by the $s \times s$ minors gives a closed subscheme of $(\mathbb P^3)^*$, and the open complement represents the linear forms giving multiplication of maximum possible rank, i.e. $U_I$.
\end{proof}

\begin{remark} Given $I \in CI(d,d,d,d)$ and $L \in [R]_1$, there are two reasons why $\times L : [A]_{2d-2} \rightarrow [A]_{2d-1}$ can fail to have maximal rank. One is that $A$ might fail to have the WLP. The other is that $A$ does have the WLP but $L$ is not general enough.
\end{remark}

\begin{definition}
We will say that $L$ is {\it general for $I$} (or for $A$) if $L \in U_I$.
\end{definition}


\section{Computations on the hyperplane section of $C$}
\label{curve section}

We maintain the notation from Section \ref{sect tools}, and we first note some useful facts:
\begin{itemize}
\item $\dim [I_C]_t = 0$ for $t \leq 2d-1$.
\item $\dim [I_C]_{2d} = 4$.
\item $h^2 (\mathcal I_C (t)) = 0 $ for $t \geq 2d-3$ (from the Koszul
  resolution for $I_{C_i}$ and the fact that
  \[
  H^2(\mathcal I_C(t)) \cong H^1(\mathcal O_C(t)) \cong H^1(\mathcal
  O_{C_1}(t)) \oplus H^1(\mathcal O_{C_2}(t)) \cong H^2(\mathcal
  I_{C_1}(t)) \oplus H^2(\mathcal I_{C_2}(t))
  \]
  since $C$ is a disjoint union).
\end{itemize}

\begin{notation}

Since $C$ is  a curve in $\mathbb P^3$, and a hyperplane $H$
is a plane, the ideal of the hyperplane section $I_{Z|H}$ can be
viewed as the homogeneous ideal of a finite set of points in $\mathbb
P^2$.  As already mentioned, we denote the coordinate ring of $H$ by
$S \cong k[x,y,z]$, and since there is no chance of confusion, we will denote the ideal of $Z$ as $I_Z$ instead of $I_{Z|H}$.

In the work so far we used the notation $B = S/I_Z$, and $\overline B$ for the general artinian reduction of $B$, consistent with \cite{HU}. However, now the geometry of $Z$ will play a greater role, so while we continue to use $B$ for the coordinate ring of $Z$, we will use the more suggestive notation $h_Z$ for the Hilbert function.
  \end{notation}

  We first describe the Hilbert function of $B =
S/I_{Z}$ under the assumption that $A = R/\langle
F_1,F_2,F_3,F_4\rangle$ has the WLP; we shall call this the {\em
  expected Hilbert function} for~$B$.

\begin{lemma} \label{exp hf} The algebra $A$ has the WLP if and only if the expected
  $h$-vector of $B = S/I_{Z}$, which is the first difference
  of the expected Hilbert function of $B$, is generic, namely it is
  \[
  \begin{array}{c|cccccccccccccccc}
    \text{degree} 	& 0 	& 1 	& 2 	& \dots 	& 2d-3 	& 2d-2 	& 2d-1 	& 2d \\ \hline
    \Delta h_Z 	& 1 	& 2 	& 3 	& \dots 	& 2d-2 	& 2d-1 	& d 	& 0.
  \end{array}
  \]
\end{lemma}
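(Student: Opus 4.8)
The plan is to compute the Hilbert function of $Z$ directly from the exact sequence (\ref{std exact}), using the known Hilbert function of $C$ together with the WLP hypothesis, and then to take its first difference. Recall that $Z$ is the general hyperplane section of the curve $C = C_1 \cup C_2$, so its $h$-vector lives on the coordinate ring $B = S/I_Z$ of a finite scheme in $\mathbb P^2$. Since $A$ is artinian with socle degree $4d-4$ and peak value in degree $2d-2$, and since $C$ has the same Hilbert function regardless of the choice of complete intersection, all the ingredients on the right-hand side of (\ref{std exact}) are determined once we know the rank of each $\times L$ map.

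First I would record that for $t \le 2d-1$ we have $\dim [I_C]_t = 0$ and $H^2(\mathcal I_C(t-1)) = 0$ (the vanishing facts collected at the start of Section \ref{curve section}), so in this range the sequence (\ref{std exact}) degenerates to
\[
0 \rightarrow [I_{Z}]_t \rightarrow [A]_{t-1} \stackrel{\times L}{\longrightarrow} [A]_t \rightarrow H^1(\mathcal I_{Z|H}(t)) \rightarrow 0 .
\]
By Lemma \ref{inj}, the WLP hypothesis forces $\times L$ to be injective for every $t \le 2d-2$ and surjective for every $t \ge 2d-2$; equivalently $[I_Z]_t = 0$ for $t \le 2d-2$, and $\times L$ has maximal rank everywhere. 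This immediately gives $\dim [B]_t = \dim [I_{Z|H}]_t$ subtracted from $\binom{t+2}{2}$, but more efficiently I would read $\dim [B]_t$ off as the cokernel dimension: for $t \le 2d-2$ the map $\times L$ is injective, so $\dim [B]_t = h_A(t) - h_A(t-1)$, which is exactly the first difference of the (known, symmetric, unimodal) Koszul Hilbert function of $A$.

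The heart of the computation is then to show that this first difference produces precisely the claimed $h$-vector $\Delta h_Z$. For $0 \le t \le 2d-2$ the value $\dim [B]_t = h_A(t) - h_A(t-1)$ equals $t+1$, because in the increasing part of the complete intersection Hilbert function the first difference grows by exactly one at each step (one can verify this from the Koszul Betti numbers, or simply note that $h_A(t) - h_A(t-1)$ counts the dimension of degree-$t$ part of $A/(L)$, which up to degree $2d-2$ is the generic sequence $1,2,3,\dots$). Thus $\Delta h_Z(t) = t+1$ for $t \le 2d-2$, matching the top row. In degree $2d-1$, surjectivity of $\times L$ gives $\dim [B]_{2d-1} = h_A(2d-1) - h_A(2d-2) + \dim[I_Z]_{2d-1}$, but here I must account for the fact that $[I_Z]_{2d-1}$ need no longer vanish; in fact the value $d$ in the table is forced because the cokernel of $\times L$ is zero from $2d-2$ onward (Lemma \ref{one spot}) while $H^1$ begins to contribute. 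Finally, for $t \ge 2d$ one checks $\Delta h_Z(t) = 0$, so that $Z$ is a reduced finite scheme whose $h$-vector terminates exactly as displayed, of total degree equal to $\deg Z = 2d^2$.

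The main obstacle I expect is pinning down the transition degrees $2d-1$ and $2d$ cleanly, where the naive first-difference computation from $h_A$ no longer suffices because $[I_Z]$ becomes nonzero and the $H^1$ and $H^2$ terms of (\ref{std exact}) reappear. In particular the value $d$ in degree $2d-1$ is not simply $h_A(2d-1) - h_A(2d-2)$ (which is negative), but rather is governed by the dimension of $[I_{Z|H}]_{2d-1}$ together with the surjectivity statement; one must argue, using $\dim[I_C]_{2d} = 4$ and the Koszul resolution of $I_C$, that the ideal $I_Z$ acquires exactly the right number of generators so that the $h$-vector drops from $2d-1$ to $d$ and then to $0$. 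The equivalence in both directions follows because each step above is reversible: a generic $\Delta h_Z$ forces maximal rank of $\times L$ in every degree via the same exact sequence, which by Lemma \ref{inj} is exactly the WLP.
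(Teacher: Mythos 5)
Your overall strategy is the paper's: use the long exact sequence (\ref{std exact}) together with the vanishing of $[I_C]_t$ and of $H^2(\mathcal I_C(t))$, and Lemma \ref{inj}, to translate WLP into statements about $[I_Z]_t$. But there are two genuine problems.

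First, the identity you use in the main range is false. You claim that for $t\le 2d-2$ one has $\dim[B]_t=h_A(t)-h_A(t-1)$ and that this difference ``grows by exactly one at each step,'' equalling $t+1$. Neither assertion is correct: for $d=6$ the first differences of $h_A$ are $3,6,10,15,21,24,\dots$, not $1,2,3,\dots$; the sequence $1,2,3,\dots$ is the Hilbert function of $k[x,y]$, i.e.\ of the \emph{artinian reduction} $\bar B$, not of $A/(L)$ (a quotient of $k[x,y,z]$) and not the first difference of $h_A$. The correct and essentially immediate route, which you mention parenthetically and then abandon, is: injectivity of $\times L$ in degree $t<2d$ is equivalent to $[I_Z]_t=0$ (Lemma \ref{inj}, using $[I_C]_t=0$), hence $h_B(t)=\binom{t+2}{2}$ and $\Delta h_Z(t)=t+1$ for $t\le 2d-2$. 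The value $t+1$ has nothing to do with increments of $h_A$.

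Second, you explicitly leave open the determination of $\Delta h_Z(2d-1)=d$ and $\Delta h_Z(2d)=0$, calling it ``the main obstacle'' and only gesturing at what would be needed. The paper closes this: from the Koszul resolution $\dim[A]_{2d-2}-\dim[A]_{2d-3}=d$, and by self-duality of $A$ the WLP is equivalent to $\times L:[A]_{2d-2}\to[A]_{2d-1}$ having kernel of dimension exactly $d$; the exact sequence identifies that kernel with $[I_Z]_{2d-1}$, so $\dim[I_Z]_{2d-1}=d$, which forces $\Delta h_Z(2d-1)=d$, and then the degree count $\deg Z=2d^2=\binom{2d}{2}+d$ forces $\Delta h_Z(2d)=0$. (Alternatively, Lemma \ref{seesaw} gives $\Delta h_Z(2d-1)=d$ unconditionally.) As written, your argument both rests on a false numerical identity in degrees $\le 2d-2$ and omits the step that pins down the tail of the $h$-vector, so it does not yet constitute a proof.
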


\begin{proof}
  Notice that the sum of the entries of the claimed $h$-vector is
  $2d^2 = \deg C$, as required.  Observe that the socle degree of $A$
  is $4d-4$, so $h_A$ is strictly increasing until degree
  $\frac{4d-4}{2} = 2d-2$, and then it is strictly decreasing.  Recall also that
  $I_C = I_1 \cdot I_2$.

  Now consider the long exact sequence (\ref{std exact}) and set $t = 2d-2$. We know that $A$ has the WLP
  if and only if $\times L : [A]_{2d-3} \rightarrow [A]_{2d-2}$ is injective. Since $[I_C]_{2d-2} = 0$, we get that $A$ has the WLP if and only if $[I_Z]_{2d-2} = 0$. Furthermore, from the Koszul resolution we see from an easy computation that
  \[
  \dim [A]_{2d-2} - \dim [A]_{2d-3} = d.
  \]
  Hence (by duality and looking at surjectivity) $A$ has the WLP if and only if $\dim \ker (\times L) = d$ from degree $2d-2$ to degree $2d-1$. Then again from (\ref{std exact}) we get that $A$ has the WLP if and only if $\dim [I_Z]_{2d-1} = d$, which (after a trivial calculation) completes the proof.
\end{proof}

Whether or not $A$ has the WLP, the first difference of the Hilbert function of $Z$ has
a nice ``see-saw" behavior, in that if $\Delta h_Z$ falls below the
predicted value by a fixed amount in a given degree before degree
$2d-1$, then it is above the predicted value (0) by the same amount in
the corresponding degree after $2d-1$.  In particular, the value in
degree $2d-1$ is $d$ regardless of the extent to which WLP fails to
hold.

\begin{lemma} \label{seesaw} Whether or not $A$ has the WLP, suppose
  that $\Delta h_Z (2d-1-m) = 2d-m-c$. Then $\Delta h_Z(2d-1+m) = c$.
  That is, for $0 \leq m \leq 2d-1$, we have
  \[
  \Delta h_Z (2d-1-m) + \Delta h_Z(2d-1+m) = 2d-m.
  \]
  In particular, $\Delta h_Z(2d-1) = d$.
\end{lemma}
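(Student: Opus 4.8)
The plan is to separate $\Delta h_Z$ into a part that is the same for every $I \in CI(d,d,d,d)$ and a part recording the rank of $\times L$ on $A$, and then to kill the second part in the see-saw sum using the self-duality of $A$.

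Concretely, write $r(t) = \operatorname{rank}\big(\times L \colon [A]_{t-1} \to [A]_t\big)$. Reading off the exact sequence (\ref{std exact}) in twist $t$ --- where the leftmost map is injective and the image of $[I_Z]_t$ in $[A]_{t-1}$ is precisely $\ker(\times L)$ --- gives
\[
\dim [I_Z]_t = \big(\dim [I_C]_t - \dim [I_C]_{t-1}\big) + \big(\dim [A]_{t-1} - r(t)\big).
\]
Since $h_Z(t) = \binom{t+2}{2} - \dim [I_Z]_t$, this rearranges to
\[
h_Z(t) - r(t) = \binom{t+2}{2} - \dim [I_C]_t + \dim [I_C]_{t-1} - \dim [A]_{t-1}.
\]
The entire right-hand side depends only on $d$: the Hilbert function of $C$ is completely determined (Lemma \ref{basic facts about C}(c)) and $\dim [A]_t$ is the same for all $I \in CI(d,d,d,d)$. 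Taking first differences, I obtain
\[
\Delta h_Z(t) = \phi(t) + \Delta r(t), \qquad \Delta r(t) = r(t) - r(t-1),
\]
where $\phi$ depends only on $d$, not on the chosen complete intersection.

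The key step is the cancellation of the rank terms. Because $A$ is Gorenstein with socle degree $4d-4$, Remark \ref{duality} gives $r(t) = r(4d-3-t)$ for the fixed general $L$. With $t_\pm = 2d-1 \pm m$ one checks $4d-3-t_+ = t_- - 1$ and $4d-3-(t_+-1) = t_-$, whence $r(t_+) = r(t_- - 1)$ and $r(t_+ - 1) = r(t_-)$; therefore $\Delta r(t_+) = -\Delta r(t_-)$ and
\[
\Delta h_Z(t_-) + \Delta h_Z(t_+) = \phi(t_-) + \phi(t_+),
\]
a quantity independent of $I$.

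It remains to evaluate this $I$-independent sum, which I would do by specializing to a generic $I \in CI(d,d,d,d)$; such an $I$ has the WLP by \cite{stanley}, \cite{watanabe}, \cite{RRR}. For it, Lemma \ref{exp hf} identifies $\Delta h_Z$ with the explicit expected $h$-vector $(1,2,\dots,2d-1,d,0,\dots)$, from which one reads $\Delta h_Z(t_-) + \Delta h_Z(t_+) = (2d-m)+0 = 2d-m$ for $1 \le m \le 2d-1$, and $2\,\Delta h_Z(2d-1) = 2d$ when $m=0$. Since the left-hand side equals $\phi(t_-)+\phi(t_+)$ for every $I$, this value is $2d-m$ in all cases, and the case $m=0$ gives $\Delta h_Z(2d-1)=d$. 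The one point needing genuine care is the duality cancellation: one must apply the rank equality of Remark \ref{duality} at exactly the paired degrees so that the two $\Delta r$ terms are exact negatives, and confirm that $\phi$ is truly independent of $I$, which rests on the determinacy of the Hilbert function of $C$.
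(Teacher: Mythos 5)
Your proof is correct, but it takes a genuinely different route from the paper's. The paper proves the see-saw identity by liaison: it observes that $\langle F_1F_3, F_2F_4\rangle$ links $C = C_1 \cup C_2$ to the curve $D$ defined by $\langle F_1,F_4\rangle \cap \langle F_2,F_3\rangle$, which is again a union of two complete intersections with the same Hilbert function and the same Hartshorne--Rao module; the hyperplane sections $Z$ and $Y$ are then linked in $H$ by a complete intersection of type $(2d,2d)$, and the Davis--Geramita--Orecchia formula $\Delta h_Z(t) + \Delta h_Y(4d-2-t) = \Delta h_X(t)$ together with $\Delta h_Y = \Delta h_Z$ gives the identity directly, including the value $2d-m$ from the $h$-vector of the $(2d,2d)$ complete intersection. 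Your argument replaces all of this with the decomposition $\Delta h_Z = \phi + \Delta r$ read off from the exact sequence (\ref{std exact}), the cancellation $\Delta r(t_+) = -\Delta r(t_-)$ coming from Gorenstein duality of $A$ (Remark \ref{duality}), and an evaluation of the $I$-independent quantity $\phi(t_-)+\phi(t_+)$ by specializing to a generic member of $CI(d,d,d,d)$, which has the WLP by Stanley--Watanabe--RRR. Both arguments are sound; yours is more elementary in that it avoids liaison theory entirely and makes transparent exactly which part of $\Delta h_Z$ varies with $I$ (namely $\Delta r$), but it leans on the external fact that some member of $CI(d,d,d,d)$ has the WLP, whereas the paper's liaison computation is self-contained and also produces the explicit right-hand side $2d-m$ without needing a reference configuration. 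Your closing caveats are exactly the right ones, and both check out: the paired degrees satisfy $4d-3-t_+ = t_- - 1$ and $4d-3-(t_+-1)=t_-$, and $\phi$ depends only on $d$ because the Hilbert functions of $C$ and of $A$ are determined by the Koszul resolutions.
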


\begin{proof}
  We have $I_C = I_{C_1} \cap I_{C_2} = \langle F_1, F_2 \rangle \cap
  \langle F_3, F_4 \rangle$.  Since $\langle F_1 F_3, F_2 F_4 \rangle
  $ is a regular sequence, it links $C$ to the curve $D$ with $I_D =
  \langle F_1, F_4 \rangle \cap \langle F_2, F_3 \rangle$, which is
  again a union of two complete intersections of the same degree, with
  the same Hartshorne-Rao module $M(D) = R/\langle F_1, F_2, F_3, F_4
  \rangle $ (note that this module is self-dual up to twist).  For a
  general hyperplane $H$, the set of points $Z$ cut out on $C$ is
  linked on $H$ to the set of points $Y$ cut out on $D$ by a complete
  intersection of type $(2d, 2d)$, and clearly $Z$ and $Y$ have the
  same Hilbert function since $C$ and $D$ have the same Hilbert
  function and the same Hartshorne-Rao module.  The assertion of the
  lemma then comes immediately from the formula for the behavior of
  the Hilbert function under linkage (see \cite{DGO}, Theorem 3).
\end{proof}

\begin{lemma} \label{hf for fail by one}
The complete intersection $A$ fails WLP by one if and only if the first difference of the Hilbert function of $k[x,y,z]/I_Z$ is
  \[
  \begin{array}{c|cccccccccccccccc}
    \text{degree} 	& 0 	& 1 	& 2 	& \dots 	& 2d-3 	& 2d-2 	& 2d-1 	& 2d \\ \hline
    \Delta h_Z 	& 1 	& 2 	& 3 	& \dots 	& 2d-2 	& 2d-2 	& d 	& 1
  \end{array}
  \]
\end{lemma}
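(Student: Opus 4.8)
My plan is to deduce the statement from two facts that are already in hand: the see-saw identity of Lemma \ref{seesaw}, which holds \emph{unconditionally}, and a translation of the phrase ``fails WLP by one'' into a single value of the Hilbert function $h_Z$. The point is that once I know the numerical meaning of failing by one, the see-saw symmetry makes the entire $h$-vector rigid; the only genuinely new work is to locate the one ``extra'' box of $\Delta h_Z$.

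First I would record the translation. Feeding $t=2d-1$ into the long exact sequence (\ref{std exact}) and using $[I_C]_{2d-2}=[I_C]_{2d-1}=0$ together with $H^2(\mathcal I_C(2d-2))=0$ leaves
\[
[A]_{2d-2}\stackrel{\times L}{\longrightarrow}[A]_{2d-1}\longrightarrow H^1(\mathcal I_{Z|H}(2d-1))\longrightarrow 0,
\]
so the cokernel of $\times L$ is isomorphic to $H^1(\mathcal I_{Z|H}(2d-1))$. Comparing the Hilbert function of $B$ with $\deg Z=2d^2$ gives $\dim H^1(\mathcal I_{Z|H}(2d-1))=2d^2-h_Z(2d-1)=\sum_{t\ge 2d}\Delta h_Z(t)$. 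Hence $A$ fails WLP by one, i.e.\ this cokernel is one-dimensional, precisely when $h_Z(2d-1)=2d^2-1$, equivalently $\sum_{t\ge 2d}\Delta h_Z(t)=1$. This already settles the easy implication: if $\Delta h_Z$ is the displayed vector, its partial sum through degree $2d-1$ is $2d^2-1$, so the cokernel is one-dimensional and $A$ fails by one.

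For the converse I would argue that failing by one forces the whole vector. Since $Z$ is a reduced set of points in $\mathbb P^2$, $\Delta h_Z$ is the Hilbert function of the artinian reduction $\overline B$, an artinian quotient of $k[x,y]$; such a function has no internal zeros, so its support is an interval $\{0,1,\dots,\mathrm{reg}\,Z\}$ (this is also a consequence of the decreasing-type property of Proposition \ref{decreasing type}). By Lemma \ref{seesaw} we have $\Delta h_Z(2d-1)=d>0$, so the regularity is at least $2d-1$. The assumption $\sum_{t\ge 2d}\Delta h_Z(t)=1$ then expresses the value $1$ as a sum of nonnegative integers that cannot have an internal zero after degree $2d-1$, which forces $\Delta h_Z(2d)=1$ and $\Delta h_Z(t)=0$ for all $t\ge 2d+1$. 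Finally the see-saw identity $\Delta h_Z(2d-1-m)+\Delta h_Z(2d-1+m)=2d-m$ fills in the rest: for $m\ge 2$ the second term vanishes, giving $\Delta h_Z(t)=t+1$ for $0\le t\le 2d-3$; the case $m=1$ gives $\Delta h_Z(2d-2)=2d-1-\Delta h_Z(2d)=2d-2$; and $m=0$ gives $\Delta h_Z(2d-1)=d$. Together with $\Delta h_Z(2d)=1$ this is exactly the claimed table.

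The step I expect to be the crux is pinning the single extra box to degree exactly $2d$ rather than to some later degree: this is where the no-internal-zeros property of $\Delta h_Z$ is indispensable. Everything else is either the unconditional see-saw symmetry or the bookkeeping in the translation, and once the extra box is located the see-saw determines every entry uniquely, so no further choices remain.
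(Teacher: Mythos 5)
Your proof is correct. The translation step (cokernel of $\times L\colon [A]_{2d-2}\to[A]_{2d-1}$ equals $H^1(\mathcal I_{Z|H}(2d-1))$, hence has dimension $\sum_{t\ge 2d}\Delta h_Z(t)$) is exactly right, using $[I_C]_{2d-1}=0$ and $h^2(\mathcal I_C(2d-2))=0$, and the arithmetic all checks out (the displayed vector sums to $2d^2-1$ through degree $2d-1$ and to $2d^2=\deg Z$ overall). Where you diverge from the paper is in the supporting machinery: the paper's one-line proof leans on Lemma \ref{others have max rk} (Macaulay growth plus Gotzmann persistence to show a one-dimensional cokernel in degree $2d-1$ forces surjectivity in all later degrees) together with the self-duality of the Gorenstein algebra $A$ to control the first half, and then Lemma \ref{exp hf} for the bookkeeping. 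You instead pin the single extra box to degree $2d$ using the elementary no-internal-zeros property of the $h$-vector of a point set in $\mathbb P^2$ (together with $\Delta h_Z(2d-1)=d>0$), and you recover the entire first half from the unconditional see-saw identity of Lemma \ref{seesaw}, which is itself a liaison statement. Your route avoids Gotzmann entirely and makes the rigidity of the $h$-vector transparent; the paper's route keeps the argument at the level of $A$ and its duality, which is why it can be stated as an immediate consequence of the earlier lemmas. Both are complete; yours is arguably the more self-contained at the level of $Z$.
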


\begin{proof}
It follows immediately from the arguments in Lemma \ref{others have max rk}, Lemma \ref{exp hf},   together with the exact sequence (\ref{std exact}).
\end{proof}

Following Lemma \ref{socle lemma}, let $K$ be the kernel of the multiplication $A(-1) \stackrel{\times L}{\longrightarrow} A$. Recall that for a graded module $M$ we defined $a_- (M) $ to be the degree of the first non-zero component of $M$.

\begin{corollary} \label{a-(K)}
For $t < 2d$ we have
\[
 \dim [K]_t - \dim [K]_{t-1} = t+1 - \Delta h_Z (t).
 \]
 In particular,
$\displaystyle a_{-} (K) = \min \{ t \ | \ \Delta h_Z (t) < t+1 \}$.

\end{corollary}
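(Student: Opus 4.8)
The plan is to read the kernel $K$ off the long exact sequence (\ref{std exact}) in the range where $I_C$ contributes nothing, translate the resulting dimension count into the first difference of $h_Z$, and finally identify $a_-(K)$ via a monotonicity observation.

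First I would fix $t < 2d$ and invoke the facts recorded at the start of Section \ref{curve section}, namely that $\dim [I_C]_s = 0$ for all $s \leq 2d-1$; in particular both $[I_C]_{t-1}$ and $[I_C]_t$ vanish. Feeding this into (\ref{std exact}) collapses its left end to
\[
0 \rightarrow [I_Z]_t \rightarrow [A]_{t-1} \stackrel{\times L}{\longrightarrow} [A]_t,
\]
so that $[I_Z]_t$ is exactly the kernel of $\times L$ in this degree. Since $K \subset A(-1)$ gives $[K]_t = \ker(\times L : [A]_{t-1} \to [A]_t)$ by definition, we conclude $\dim [K]_t = \dim [I_Z]_t$ for every $t < 2d$.

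Next I would rewrite this dimension. Working in $S = k[x,y,z]$ we have $\dim [I_Z]_t = \binom{t+2}{2} - h_Z(t)$, and taking the first difference, together with the identity $\binom{t+2}{2} - \binom{t+1}{2} = t+1$, yields
\[
\dim [K]_t - \dim [K]_{t-1} = (t+1) - \Delta h_Z(t),
\]
which is the first assertion (note $t-1 < 2d$ as well, so both terms are genuinely $[K]$-dimensions). For the description of $a_-(K)$, the key point is that $\Delta h_Z(t)$ is the Hilbert function of a graded artinian quotient of $k[x,y]$ and hence satisfies $\Delta h_Z(t) \leq t+1$ for all $t$. Thus every first difference above is non-negative, $\dim [K]_t$ is non-decreasing, and since it vanishes in sufficiently negative degrees, $[K]_t$ first becomes non-zero exactly at the first degree where a difference is strictly positive, i.e. where $\Delta h_Z(t) < t+1$.

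The one genuinely delicate point is confirming that this first degree lies inside the range $t < 2d$ where the formula $\dim[K]_t = \dim[I_Z]_t$ is valid — otherwise the monotonicity argument could fail to locate $a_-(K)$. This is settled by Lemma \ref{seesaw}: since $\Delta h_Z(2d-1) = d < 2d = (2d-1)+1$, the inequality $\Delta h_Z(t) < t+1$ already holds for some $t \leq 2d-1$, forcing $a_-(K) \leq 2d-1 < 2d$. Hence $a_-(K) = \min \{ t \mid \Delta h_Z(t) < t+1 \}$, and everything else is bookkeeping with binomial coefficients and the exact sequence.
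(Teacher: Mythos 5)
Your proof is correct and follows essentially the same route as the paper: both use the vanishing of $[I_C]_t$ for $t<2d$ to collapse the long exact sequence (\ref{std exact}) and identify $[K]_t$ with $[I_Z]_t$, compute the first difference of $\dim[I_Z]_t$ as $t+1-\Delta h_Z(t)$, and invoke Lemma \ref{seesaw} to ensure the minimum is attained in the range $t\le 2d-1$. Your added remarks on monotonicity (via $\Delta h_Z(t)\le t+1$ as an $O$-sequence over $k[x,y]$) only make explicit what the paper leaves implicit.
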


\begin{proof}
For any fixed $t$, consider the long exact sequence (\ref{std exact}).
We know that $[I_C]_t = 0$ for $t < 2d$. On the other hand, Lemma \ref{seesaw} guarantees that $\Delta h_Z(2d-1) < 2d$, so in the range $0 \leq t \leq 2d-1$ we have $\dim [K]_t = \dim [I_{Z}]_t$. Hence in this range
\[
\dim [K]_t - \dim [K]_{t-1} = \dim [I_{Z}]_t - \dim [I_{Z}]_{t-1} = t+1-\Delta h_Z(t)
\]
as claimed.
\end{proof}

\begin{example} \label{1st ex} Suppose $d=6$.  Then the Hilbert function of $R/I = M(C)$ is given by the following table:
  \[
  \arraycolsep=4pt
  \begin{array}{rrrrrrrrrrrrrrrrrrrrrrrrrrrrrrrrrr}
    0 & 1 & 2 & 3 & 4 & 5 & 6 & 7 & 8 & 9 & 10 & 11 & 12 & 13 & 14 &
    15 & 16 & 17 & 18 & 19 & 20 \\ \hline
    1 & 4 & 10 & 20 & 35 & 56 & 80 & 104 & 125 & 140 & 146 & 140 & 125
    & 104 & 80 & 56 & 35 & 20 & 10 & 4 & 1
  \end{array}
  \]
  For a general choice of $I$, we know from Lemma \ref{exp hf} that $B
  = k[x,y,z]/I_{Z}$ has $h$-vector (i.e. first difference of the
  Hilbert function) equal to
  \begin{equation}
    \begin{array}{c|rrrrrrrrrrrrrrrrrrrrrrrrrrrrrrrrr}
      \text{degree} & 0 & 1 & 2 & 3 & 4 & 5 & 6 & 7 & 8 & 9 & 10 & 11 & 12  \\ \hline
      \Delta h_Z & 1 & 2 & 3 & 4 & 5 & 6 & 7 & 8 & 9 & 10 & 11 & 6 & 0
    \end{array}
  \end{equation}
  if the WLP holds. Suppose that the multiplication on $A$ from degree 7 to degree 8
  fails to be injective.  Then the maps from degree 8 to degree 9 and
  from degree 9 to degree 10 also fail to be injective, and the
  kernels are isomorphic to the components of $I_{Z}$ in degrees 8,
  9 and 10 respectively.  The preceding lemmas allow the following
  $h$-vector for $k[x,y,z]/I_{Z}$:
  \begin{equation} \label{bad hf}
    \begin{array}{c|rrrrrrrrrrrrrrrrrrrrrrrrrrrrrrrrr}
      \text{degree} & 0 & 1 & 2 & 3 & 4 & 5 & 6 & 7 & 8 & 9 & 10 & 11 & 12 & 13 & 14   \\ \hline
      \Delta h_Z& 1 & 2 & 3 & 4 & 5 & 6 & 7 & 8 & 7 & 7 & 6 & 6 & 5 & 3 & 2
    \end{array}
  \end{equation}
  \noindent but not the following one:
  \[
  \begin{array}{c|rrrrrrrrrrrrrrrrrrrrrrrrrrrrrrrrr}
    \text{degree} & 0 & 1 & 2 & 3 & 4 & 5 & 6 & 7 & 8 & 9 &10 & 11 & 12 & 13 & 14  \\ \hline
    \Delta h_Z & 1 & 2 & 3 & 4 & 5 & 6 & 7 & 8 & 7 & 7 & 6 & 6 & 5 & 4 & 1
  \end{array}
  \]
  Two methods will play an important role in our proofs: the symmetry principle (see (\ref{symm princ}) and the Cayley-Bacharach theorem. To illustrate them
     suppose that (\ref{bad hf}) were to occur.  By Davis' theorem
  (Theorem \ref{davis thm}), $Z$ contains a subset of 71 points on a
  curve $F$ of degree 7, and one point, $P$, not lying on $F$.  But
  $C_1$ and $C_2$ are smooth curves of degree $36$, whose ideals were
  chosen as general 2-dimensional subspaces of the 4-dimensional space
  of sextics spanned by $F_1,F_2,F_3,F_4$.  Let $Z_1$ and $Z_2$ be the
  hyperplane sections of $C_1$ and $C_2$, respectively, with the
  general plane $H$.  Both $Z_1$ and $Z_2$ have the uniform position
  property (UPP).  By symmetry (since $I_1$ and $I_2$ are chosen
  generally, so are indistinguishable), there cannot be one
  distinguished point of this sort, since $P$ would have to lie on
  either $Z_1$ or $Z_2$.  Furthermore, to illustrate another of our
  tools, any curve of degree 7 containing all but one point of $Z_1$
  (resp.\ $Z_2$) must contain all of $Z_1$ (resp.\ $Z_2$) by the
  Cayley-Bacharach property of complete intersections.
\end{example}

\begin{lemma} \label{rule out} An $h$-vector of the form
  \[
  \begin{array}{c|ccccccccccccccccccccccccccccccccc}
    \text{degree} & 0 	& 1 	& 2 	& 3 	& \dots 	& t-1 	& t 	& t+1 	& \dots \\ \hline
    \Delta h_Z & 1 	& 2 	& 3 	& 4 	& \dots 	& t 	& t 	&  a 	&   \dots
  \end{array}
  \]
  \noindent does not occur for the coordinate ring, $B$, of a general
  hyperplane section $Z$ of $C$, for $a=t$ or $t-1$.
\end{lemma}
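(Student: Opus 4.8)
The plan is to assume that such an $h$-vector occurs, to extract from it a reduced plane curve on which both $Z_1$ and $Z_2$ lie, and then to contradict the indistinguishability of $Z_1$ and $Z_2$.

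First I would record the numerical content. Summing $\Delta h_Z$ gives $h_Z(t-1)=\binom{t+1}{2}$ and $h_Z(t)=\binom{t+1}{2}+t$, so $[I_Z]_{t-1}=0$ and $\dim[I_Z]_t=1$: the scheme $Z$ lies on a unique curve $F$ of degree $t$ and on no curve of smaller degree. By Corollary~\ref{a-(K)} this says $a_-(K)=t$ and $\dim[K]_t=1$, i.e.\ injectivity of $\times L$ first fails in degree $t-1\to t$, and by exactly one. I would next pin down $t$. Since $Z\supseteq Z_1$ and $Z_1$ is a complete intersection of type $(d,d)$ (Lemma~\ref{basic facts about C}), no curve of degree $<d$ contains $Z$, so $t\ge d$; the value $t=d$ is impossible because a degree-$d$ curve through $Z$ would lie in $\langle G_1,G_2\rangle\cap\langle G_3,G_4\rangle$, which is $0$ as $G_1,\dots,G_4$ span a fixed four-dimensional subspace of $[S]_d$. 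On the other side, Lemma~\ref{seesaw} gives $\Delta h_Z(2d-1)=d<2d$, so degree $2d-1$ can be neither in the strictly increasing part $1,2,\dots,t$ nor the plateau degree $t$ (as $d<t$); hence $2d-1>t$, i.e.\ $t\le 2d-2$. Thus $d+1\le t\le 2d-2$.

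Then I would apply Davis' theorem (Theorem~\ref{davis thm}) at the repeat $h_{t-1}=h_t=t$. Because $h_t=t$, the common factor $F$ has degree $t$ and is reduced, and the residual set has $m=(t-2)-h_t=-2<0$, hence is empty: all of $Z$ lies on the single reduced curve $F$. In particular both $Z_1=G_1\cap G_2$ and $Z_2=G_3\cap G_4$ lie on $F$. Moreover $F$ contains none of the irreducible curves $G_i$ as a component: otherwise the residual factor, of degree $t-d<d$, would have to contain the opposite complete intersection, which requires a curve of degree at least $d$.

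The crux---and the step I expect to be the main obstacle---is to convert the statement ``$Z_1$ and $Z_2$ both lie on one reduced curve $F$ of degree $t\le 2d-2$, which is the unique curve of that degree through $Z$'' into a contradiction with the Symmetry Principle. The tools are the uniform position property of each $Z_i$ and the Cayley--Bacharach property of the complete intersections $Z_i$ (effective for degrees up to $2d-3$, hence essentially throughout our range). I would study the residuation of $Z_1$ in the complete intersection $F\cap G_1$ of type $(t,d)$ (in which $Z_1$ has residual of degree $d(t-d)$), and symmetrically for $Z_2$ in $F\cap G_3$, together with the fact that $Z_2$ imposes exactly $\dim[I_{Z_1}]_t-1=2\binom{t-d+2}{2}-1$ conditions on the linear system of degree-$t$ curves through $Z_1$. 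The goal is to show that uniform position and Cayley--Bacharach force this configuration to single out a distinguished subscheme of $F$ attached to one of $Z_1,Z_2$ but not to the other; since $Z_1$ and $Z_2$ come from two general $2$-planes of the same four-dimensional space and are interchangeable, no such distinguished datum can exist. Pinning down precisely which Cayley--Bacharach/residuation invariant breaks the symmetry, and verifying it uniformly for both $a=t$ and $a=t-1$, is the real work; by contrast the numerical and Davis-theoretic reductions above are routine. (It is worth noting that a pure dimension count will not suffice here: the identity $\dim[I_Z]_j=4\binom{j-d+2}{2}-\dim[A/(L)]_j^{\,c}$ shows the displayed $h$-vector is numerically self-consistent, so the obstruction must be genuinely geometric.)
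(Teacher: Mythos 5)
There is a genuine gap, and it is the whole content of the lemma. Your numerical reductions are fine, but note that your application of Davis' theorem at the plateau is essentially vacuous: since $[I_Z]_{t-1}=0$ and $\dim[I_Z]_t=1$, the ``common factor of degree $h_t=t$'' is just the unique degree-$t$ generator itself, and the conclusion ``all of $Z$ lies on the reduced curve $F$'' is exactly what the $h$-vector already told you. You then explicitly defer the actual contradiction to ``the real work,'' so the proposal does not prove the statement. Worse, the route you sketch cannot succeed, at least for $a=t$: the configuration you arrive at --- $Z_1$ and $Z_2$ both lying on a single reduced curve of degree $t$, with $h$-vector $(1,2,\dots,t,t,t,\dots)$ --- is perfectly symmetric in $Z_1$ and $Z_2$ and is geometrically realizable for a union of two planar complete intersections, so no amount of UPP, Cayley--Bacharach, or the Symmetry Principle applied to $F$ will produce a contradiction. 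The obstruction is not intrinsic to $Z$ as a plane configuration; it comes from $Z$ being a hyperplane section of $C$.

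The missing ingredient is the Socle Lemma (Lemma \ref{socle lemma}, via Remark \ref{trans socle lemma} and inequality (\ref{hu ineq})), which you never invoke. Since $[I_C]_t=0$, the degree-$t$ form through $Z$ does not lift to $C$, so $a_-(K)=t$ and the Socle Lemma forces the artinian reduction $\bar B$ to have socle in degree $t-1$ or $t$; since $\bar I_Z$ is principal in degrees $\le t$, $\bar B$ agrees with a hypersurface ring there and the socle must sit exactly in degree $t$. If $a=t$, the ideal remains principal through degree $t+1$, so $\bar B$ has no socle in degree $\le t$ --- an immediate contradiction with no geometry needed. If $a=t-1$, the socle in degree $t$ forces a summand $S(-t-2)$ at the end of the minimal free resolution, hence a syzygy $QF+LG=0$ between the generators $F$ (degree $t$) and $G$ (degree $t+1$), so $F$ and $G$ share a factor of degree $t-1$; only at that point do Davis' theorem (exactly two points of $Z$ off the degree-$(t-1)$ curve, one from each $Z_i$ by symmetry) and Cayley--Bacharach (a curve of degree $t-1<2d-3$ through all but one point of the complete intersection $Z_1$ contains all of $Z_1$) deliver the contradiction. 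Your instinct to use Davis, Cayley--Bacharach and symmetry is right for the $a=t-1$ case, but they must be applied to the degree-$(t-1)$ common factor extracted via the socle computation, not to the degree-$t$ curve $F$.
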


\begin{proof}
  We maintain the notation of Lemma \ref{socle lemma}, Remark
  \ref{soc of h1} and Remark \ref{trans socle lemma}. We see from the $h$-vector that the kernel $K$ begins in
  degree $t$ (Corollary \ref{a-(K)}).  We thus know from (\ref{hu ineq}) (or from Remark \ref{trans socle lemma}) that $\bar B$ has
  socle in degree $t-1$ or $t$.  Since $I_{Z}$ has only one minimal generator $F$ of degree $\le t$, $\bar B$ coincides with the
  hypersurface ring of $F$ in degrees $\leq t$, which has no socle, so the socle degree of $\bar B$ must be in degree $t$.

  Suppose that $a=t$.  Then in fact $\bar B$ is a hypersurface ring (the corresponding ideal has only one generator)  in degrees $\leq t+1$, so we have a contradiction
  with the socle in degree $t-1$ or $t$.

  Suppose that $a=t-1$. Then $I_{Z}$ has one minimal generator $F$
  in degree $t$ and one minimal generator $G$ in degree $t+1$. Call $\bar F$ and $\bar G$ the corresponding elements in the artinian reduction.

  We first claim that
  these two generators have a syzygy of the form $QF + LG = 0$ with
  $\deg Q = 2$ and $\deg L = 1$. Indeed, since there is a socle element in degree $t$,  the minimal free resolution of $I_{Z}$ has a term $ R(-t-2)$,
 which represents syzygies of all generators of degrees $\leq t+1$, of which there are only $F$ and $G$.

  Thus $F$ and $G$ have a common
  factor of degree $t-1=a$.  By Davis' theorem, $Z$ has a subset $Z'$
  with $h$-vector
  \[
  \begin{array}{c|ccccccccccccccccccccccccccccccccc}
    \text{degree} 	& 0 	& 1 	& 2 	& 3 	& \dots 	& t-2 	& t-1 	&t 	& t+1 	&  \dots \\ \hline
    \Delta h_Z 	& 1 	& 2 	& 3 	& 4 	& \dots 	& t-1 	& t-1 	&  t-1 	& t-1 	&  \dots
  \end{array}
  \]
  \noindent where the rest of the $h$-vector agrees with that of $B$.
  Thus there are two points of $Z$ not lying on the curve defined by
  this common factor.  As in Example \ref{1st ex}, by symmetry, one
  point comes from $Z_1$ and one comes from $Z_2$.  By assumption and
  Lemma \ref{seesaw}, we must have $t+1 < 2d-1$, so $t -1 < 2d-3$.
  But then by the Cayley-Bacharach theorem, any curve of degree $t-1$
  containing all but one of the points of the complete intersection
  $Z_1$ contains all of $Z_1$, giving a contradiction.
\end{proof}

Even though it is not true that $Z$ has the UPP (see page \pageref{UPP def}), it is still true that the Hilbert function of $Z$ has the decreasing type property:

\begin{proposition} \label{decreasing type}
  The $h$-vector of the general hyperplane section of $C$ is of
  decreasing type.
\end{proposition}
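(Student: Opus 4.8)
The plan is to argue by contradiction, in the spirit of the classical proof that uniform position forces decreasing type, but substituting the symmetry principle (cf. Example~\ref{1st ex}) and the Cayley--Bacharach property of the two complete intersections $Z_1$ and $Z_2$ for the uniform position of $Z$ itself, which fails here.

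First I would pin down the shape of the $h$-vector. Since $\Delta h_Z$ is the Hilbert function of the artinian reduction $k[x,y]/\bar I_Z$, Macaulay's bound in two variables applies: an elementary computation shows that if $\Delta h_Z(i)\le i$ (equivalently $i\ge a_-(K)$, by Corollary~\ref{a-(K)}) then $\Delta h_Z(i+1)\le \Delta h_Z(i)$. Thus once $\Delta h_Z$ leaves the diagonal it is non-increasing and can never rise again; so $\Delta h_Z$ increases by $1$ up to degree $a_-(K)-1$ and is weakly decreasing thereafter, and it fails to be of decreasing type exactly when its non-increasing tail contains a genuine plateau $\Delta h_Z(t-1)=\Delta h_Z(t)=h>0$ occurring after an earlier strict decrease. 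Assuming such a plateau, the values of $\Delta h_Z$ strictly exceed $h$ somewhere before degree $t-1$.

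Next I would apply Davis' theorem (Theorem~\ref{davis thm}) at degree $t$: it produces a reduced common factor $F$ of degree $h$ dividing all forms of $I_Z$ of degree $\le t$, and splits $Z=W\sqcup W'$ into the points on and off $F$, with $h$-vectors as in (3)--(4) of the theorem. Because $\Delta h_Z$ exceeds $h$ before degree $t-1$, the set $W'$ is non-empty, with $|W'|=\sum_j(\Delta h_Z(j)-h)^{+}$. The curve $F$ is intrinsic to $Z$ (it is the greatest common divisor of the forms of degree $\le t$ in $I_Z$), hence is preserved by the symmetry exchanging the general complete intersections $C_1$ and $C_2$; invoking the symmetry principle, $F$ meets $Z_1$ and $Z_2$ in the same number of points, so $W'\cap Z_1$ and $W'\cap Z_2$ have a common cardinality $p\ge 1$ and $|W'|=2p$ is even. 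In particular, whenever Davis forces $|W'|$ to be odd, as for the $h$-vector (\ref{bad hf}) of Example~\ref{1st ex} where $|W'|=1$, the contradiction is immediate.

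It remains to treat the even case $p\ge 1$, and here the engine is the Cayley--Bacharach property of the complete intersection $Z_1$, whose canonical degree is $\sigma=2d-3$. The form $F$ passes through all of $Z_1$ except the $p$ points of $W'\cap Z_1$, so $[I_{Z_1\setminus (W'\cap Z_1)}]_h$ strictly contains $[I_{Z_1}]_h$; by Cayley--Bacharach duality this forces $W'\cap Z_1$ to fail to impose independent conditions on forms of degree $\sigma-h=2d-3-h$. On the other hand $W'\cap Z_1$ is a subset of the complete intersection $Z_1$, whose uniform position property should preclude this once $p$ is within range. I expect the main obstacle to lie precisely in closing this last step uniformly in $h$: one must bound $p=\tfrac12\sum_j(\Delta h_Z(j)-h)^{+}$ against $\binom{2d-1-h}{2}=\dim_k S_{2d-3-h}$ using the see-saw relation (Lemma~\ref{seesaw}) and the fact that $\Delta h_Z(2d-1)=d$, and to handle the regime of small $h$ (where $2d-3-h$ is large and $Z_1$ no longer imposes independent conditions in that degree) by a separate B\'ezout argument: since $F$ shares no component with the general degree-$d$ curve through $Z_1$, it can contain at most $dh$ of its $d^2$ points, forcing $p\ge d(d-h)$, which should again overrun the admissible range. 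This bookkeeping, parallel to but more delicate than Lemma~\ref{rule out}, is where the real work will be.
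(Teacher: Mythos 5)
There is a genuine gap: your argument stops exactly where the proof has to be carried out. You assemble the right tools (Davis' theorem, the symmetry principle forcing $|W'\cap Z_1|=|W'\cap Z_2|=p$, and the uniform position of $Z_1$), but you then say explicitly that the decisive quantitative step --- bounding $p$ against the relevant dimension count, ``uniformly in $h$,'' with a separate B\'ezout argument for small $h$ --- remains to be done. That step is the proposition; without it you have an outline, not a proof. Moreover the route you sketch for closing it (Cayley--Bacharach duality for $Z_1$ in degree $2d-3-h$, plus a case split for small $h$) is both harder than necessary and aimed at a regime that does not occur.

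The key fact you are missing is that Lemma~\ref{seesaw} pins down where a plateau can sit \emph{before} you start counting: if $\Delta h_Z(t-1)=\Delta h_Z(t)=s$ with $s<t$, then $t\le 2d-1$ (otherwise the see-saw relation forces a violation of Macaulay's bound in degree $\le 2d-2$) and consequently $s\ge d$. So your ``regime of small $h$'' is empty, and no B\'ezout detour is needed. With $s\ge d$ and $t\le 2d-1$ in hand, the paper's argument is a direct point count, with no appeal to Cayley--Bacharach duality: the number of points of $Z$ off the Davis curve $D$ of degree $s$ is at most $\binom{t-s}{2}$ (at most $\binom{d-1}{2}$ when $t=2d-1$), so $Z_1$ has at least $d^2-\tfrac12\binom{t-s}{2}$ points on $D$; on the other hand, by UPP the maximal number of points of $Z_1$ that a degree-$s$ curve \emph{not} containing $Z_1$ can pass through is the sum of the $h$-vector of $Z_1$ through degree $s$, namely $\binom{d+1}{2}+\binom{d}{2}-\binom{2d-1-s}{2}$. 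The inequality between these two quantities reduces to $(2d-1-s)(2d-2-s)\ge\tfrac12(t-s)(t-s-1)$, which is immediate from $t\le 2d-1$. Hence $D\supset Z_1$, contradicting the existence of points of $Z_1$ off $D$. If you want to salvage your version, you must either prove the Cayley--Bacharach/independence estimate you defer, or (better) first establish $s\ge d$ and $t\le 2d-1$ from Lemma~\ref{seesaw} and then run the elementary count above.
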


\begin{proof}
  Suppose that the $h$-vector is not of decreasing type.  By
  definition, this means that for some $s$ and $t$ we have
  \[
  0 < s = \Delta h_Z(t-1) = \Delta h_Z(t) < t.
  \]
  From Lemma \ref{seesaw}, a calculation gives that we must have $t \leq 2d-1$. Indeed, recall
  that $\Delta h_Z$ is itself an $O$-sequence, and
  if $t \geq 2d$ then Lemma \ref{seesaw} forces a growth of the Hilbert function $\Delta h_Z$ in
  degree $\leq 2d-2$ that exceeds Macaulay's bound. Hence
  $s \geq d$, again by Lemma \ref{seesaw}.  By Theorem \ref{davis thm},
  $Z$ contains a subset $Z'$ lying on a curve $D$ of degree $s$ and
  having $h$-vector
  \[
  \begin{array}{c|ccccccccccccccccccccccccccccccccc}
    \text{degree} & 0 & 1 & 2 & 3 & \dots & s-2 & s-1 & s & \dots &  t-1 &t & t+1 &  \dots \\ \hline
    \Delta h_{Z'} & 1 & 2 & 3 & 4 & \dots & s-1 & s  & s & \dots & s & s & h_{t+1} &  \dots
  \end{array}
  \]
  We will first bound above the number of points of $Z$ that lie off
  $D$, thus bounding below the number of points of $Z'$.  By the
  assumption that the $h$-vector is not of decreasing type, the number
  of points off $D$ is not zero.  At least half of these must be
  points of either $Z_1$ or $Z_2$ (and in fact by symmetry it must be
  exactly half).  Without loss of generality, say it is $Z_1$.  Since
  $C_1$ is a smooth irreducible curve, $Z_1$ satisfies the UPP.  We
  will use this fact to get a contradiction.  We will use the fact
  that the $h$-vector of $Z_1$ is
  \[
  \begin{array}{c|ccccccccccccccccccccccccccccccccc}
    \text{degree} & 0 & 1 & 2 & 3 & \dots & d-2 & d-1 & d  & \dots &  2d-3 & 2d-2 &  \dots \\ \hline
    \Delta h_{Z_1} & 1 & 2 & 3 & 4 & \dots & d-1 & d  & d-1 & \dots & 2 & 1
  \end{array}
  \]
  \bigskip
  \noindent \underline{Case 1}: If $t = 2d-1$ then $s=d$ by Lemma
  \ref{seesaw}.  Then the number of points of $Z$ not lying on $D$ is
  at most
  \[
  1 + 2 + \cdots + (d-2) = \binom{d-1}{2}
  \]
  so the number of points of $Z$ on $D$ is at least $2d^2 -
  \binom{d-1}{2}$.  Hence $Z_1$ contains at least $d^2 - \frac{1}{2}
  \binom{d-1}{2}$ points on $D$.  On the other hand, the sum of the
  entries of the $h$-vector of $Z_1$ through degree $s = d = \deg D$
  is
  \[
  1 + 2 + 3 + \dots + d + (d-1) = \binom{d+1}{2} +(d-1) \leq d^2 -
  \frac{1}{2} \binom{d-1}{2}
  \]
  as long as $d \geq 2$.  Since $Z_1$ has the UPP, this means that any
  curve of degree $d$ containing at least $d^2 - \frac{1}{2}
  \binom{d-1}{2}$ points of $Z_1$ must contain all of $Z_1$.  This
  contradicts the fact that some points of $Z$, hence of $Z_1$, do not
  lie on $D$.

  \bigskip
  \noindent \underline{Case 2}: If $t \leq 2d-2$ then the number of
  points of $Z$ not lying on $D$ is at most
  \[
  1+2+3 + \dots + (t-1-s) = \binom{t-s}{2}
  \]
  (since the value of the $h$-vector of $Z$ is greater than $s$ at
  most up to degree $t-2$, and the value there is at most $t-1$).
  Hence the number of points of $Z$ that do lie on $D$ is at least
  $2d^2 - \binom{t-s}{2}$, so $Z_1$ contains at least $ d^2 -
  \frac{1}{2} \binom{t-s}{2}$ points on $D$.  Note that we have $s \geq d$,
  and in fact by Case 1 we can assume $s > d$.
 As before, the sum of
  the entries of the $h$-vector of $Z_1$ through degree $s$ is
  \[
  1+2+3+ \dots + (d-1) + d + (d-1) + (d-2) + \dots + (2d-1-s) =
  \binom{d+1}{2} + \binom{d}{2} - \binom{2d-1-s}{2}.
  \]
  We claim that
  \begin{equation} \label{claim} d^2 - \frac{1}{2} \binom{t-s}{2} \geq
    \binom{d+1}{2} + \binom{d}{2} - \binom{2d-1-s}{2}.
  \end{equation}
  As in case 1, UPP for $Z_1$ will then mean that any curve of degree
  $s$ containing at least $d^2 - \frac{1}{2} \binom{t-s}{2}$ points of
  $Z_1$ contains all of $Z_1$, which gives the desired contradiction.
  But (\ref{claim}) is equivalent to
  \[
  (2d-1-s)(2d-2-s) \geq \frac{(t-s)(t-s-1)}{2}
  \]
  which is clearly true.
\end{proof}

Next we give a result for any $d > 1$, for any complete intersection in four variables  which is less than full WLP but improves the range where we know maximal rank of the multiplication by a general linear form to hold.

\begin{theorem} \label{d1=d2=d3=d4} Let $A = R/I = R/\langle
  F_1,F_2,F_3,F_4 \rangle$ where $I$ is a complete intersection and
  $\deg F_i = d$ for all $i$.  Let $L$ be a general linear form.
  Then the multiplication maps $\times L \colon
  [A]_{t-1}\rightarrow[A]_t$ are injective for $t<
  \lfloor\frac{3d+1}{2}\rfloor$.
\end{theorem}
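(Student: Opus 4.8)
The plan is to convert the injectivity assertion into a vanishing statement about the ideal of the points $Z$, and then to pin down the shape of the $h$-vector of $Z$ by playing the \emph{decreasing type} property against the see-saw symmetry. Since $\lfloor\frac{3d+1}{2}\rfloor<2d$, Lemma~\ref{inj} shows that injectivity of $\times L\colon[A]_{t-1}\to[A]_t$ for all $t<\lfloor\frac{3d+1}{2}\rfloor$ is equivalent to $[I_Z]_t=0$ for all such $t$. So I would argue by contradiction. Let $t_0$ be the least degree with $[I_Z]_{t_0}\neq 0$; by Corollary~\ref{a-(K)} this is $t_0=\min\{t\mid \Delta h_Z(t)<t+1\}$. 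Assume $t_0<\lfloor\frac{3d+1}{2}\rfloor$. Minimality gives $\Delta h_Z(t)=t+1$ for $t<t_0$, so the $h$-vector opens generically as $1,2,\dots,t_0$ (the value in degree $t_0-1$ being $t_0$), while $\Delta h_Z(t_0)\le t_0$.

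The whole argument runs on one estimate. Suppose $\Delta h_Z$ strictly decreases in some degree $\tau$ with $t_0\le\tau\le 2d-1$. By Proposition~\ref{decreasing type} the sequence is then strictly decreasing on all of $[\tau,2d-1]$, and since Lemma~\ref{seesaw} forces $\Delta h_Z(2d-1)=d$, counting the strict drops gives $\Delta h_Z(\tau)\ge d+(2d-1-\tau)=3d-1-\tau$. I would apply this first with $\tau=t_0$ in the case $\Delta h_Z(t_0)<t_0$: combining with $\Delta h_Z(t_0)\le t_0-1$ yields $3d-1-t_0\le t_0-1$, i.e.\ $t_0\ge 3d/2$. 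Because $\lceil 3d/2\rceil=\lfloor\frac{3d+1}{2}\rfloor$ for both parities of $d$, this contradicts $t_0<\lfloor\frac{3d+1}{2}\rfloor$.

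There remains the plateau $\Delta h_Z(t_0)=t_0$, so the $h$-vector reads $1,2,\dots,t_0,t_0,a,\dots$ with $a:=\Delta h_Z(t_0+1)\le t_0$ (Macaulay's theorem, since the value cannot grow after a repeat). If $a\le t_0-2$, the estimate with $\tau=t_0+1$ gives $3d-2-t_0\le a\le t_0-2$, hence again $t_0\ge 3d/2$, a contradiction. The only survivors are $a=t_0$ and $a=t_0-1$, which are exactly the two $h$-vectors excluded by Lemma~\ref{rule out}. Thus every case is impossible, so $t_0\ge\lfloor\frac{3d+1}{2}\rfloor$, and the theorem follows by Lemma~\ref{inj}.

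I expect the only delicate point to be the boundary bookkeeping. One must check that the numerical bound sharpens correctly to $t_0\ge\lceil 3d/2\rceil=\lfloor\frac{3d+1}{2}\rfloor$ for even and odd $d$ alike, and—more importantly—that the two residual plateau continuations genuinely fall under Lemma~\ref{rule out}. Its geometric core (the Symmetry Principle together with UPP for $Z_1,Z_2$ and Cayley--Bacharach for the $(d,d)$ complete intersections) needs a degree-$(t_0-1)$ curve meeting all but one point of $Z_1$ to contain all of $Z_1$, which is valid as long as $t_0-1\le 2d-3$, i.e.\ $t_0\le 2d-2$; one verifies $t_0\le 2d-2$ holds throughout the range $t_0<\lfloor\frac{3d+1}{2}\rfloor$, the sole tight instance being $d=3$, $t_0=2d-2=4$, where Cayley--Bacharach still applies at the boundary degree $2d-3$. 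The substantive geometry is thus all imported from Lemma~\ref{rule out} and Proposition~\ref{decreasing type}, leaving the proof itself a clean combinatorial assembly of those results.
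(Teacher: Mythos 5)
Your proof is correct and follows essentially the same route as the paper's: both reduce to bounding the initial degree of $I_Z$ via Lemma~\ref{inj} and Corollary~\ref{a-(K)}, and then combine Proposition~\ref{decreasing type}, the see-saw identity $\Delta h_Z(2d-1)=d$ from Lemma~\ref{seesaw}, and Lemma~\ref{rule out} to force that initial degree up to $\lfloor\frac{3d+1}{2}\rfloor$. The paper merely exhibits the extremal $h$-vectors and leaves the verification to the reader, whereas you have written out the case analysis (including the boundary check $t_0\le 2d-2$ needed for Cayley--Bacharach) explicitly and correctly.
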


\begin{proof}
  We have to determine the $h$-vector for $Z$ with the smallest
  possible initial degree, given the constraints in our lemmas.
  Suppose first that $d$ is even.  Then the smallest possible initial
  degree comes when the $h$-vector has the form {\small
    \[
    (1,2,3,\dots, b-1, b, b-1,\dots,d+1,d,\dots) \hbox{ \ or \ }
    (1,2,3,\dots,b-1,b,b,b-2,\dots, d+1,d,\dots),
    \]} \hspace{-.35cm} where $d$ occurs in degree $2d-1$. In both cases  the  initial degree is $b$.  One checks that this initial
  degree is $\frac{3d}{2}$.  When $d$ is odd, the smallest possible
  initial degree comes, for instance, when the $h$-vector has the form
  \[
  (1,2,3,\dots, b-1, b, b-2,\dots,d+1,d,\dots)
  \]
  thanks to Lemma \ref{rule out}.  One checks that the initial degree
  is $\frac{3d+1}{2}$.
\end{proof}


\section{WLP for small $d$} \label{small d}

In this section we prove the WLP for complete intersections of type $(d,d,d,d)$ for $d \leq 5$.  The cases $3 \leq d \leq 5$ were previously open. In the next section we  outline a possible approach for all $d$ and give two conjectures that, if proved, would allow to demonstrate WLP following this approach. We maintain the notation of Remark \ref{soc of h1}; in particular, $S = k[x,y,z]$ is (by slight abuse of notation) the coordinate ring for the plane $H$ containing the points $Z$. Furthermore, $B = S/I_Z$ and $\bar B$ is an artinian reduction of $B$ by a linear form, say $L'$. We will denote by $\bar I_Z$ the ideal $\frac{I_Z + (L)'}{(L')}$ in $T = S/(L')$, so $\bar B =T / \bar I_Z$, and recall again that the graded Betti numbers of $\bar I_Z$ over $T$ are the same as those of $I_Z$ over $S$. In a similar way, for other sets of points in $H$ we will denote by $\bar \ $ the ideal of the artinian reduction of those coordinate rings.

\begin{example} \label{d=2}
When $d=2$, it was shown in \cite{MN-quadrics} Corollary 4.4 that a complete intersection of four quadrics has the WLP.
\end{example}

\begin{proposition} \label{d=3}
A complete intersection of four cubics has the  WLP.
\end{proposition}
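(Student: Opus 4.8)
The statement is that a complete intersection $A$ of four cubics has the WLP, i.e. $d=3$. By Theorem \ref{d1=d2=d3=d4}, multiplication by a general linear form is injective for $t < \lfloor \frac{3\cdot 3 + 1}{2}\rfloor = \lfloor 5 \rfloor = 5$, that is, injective up through the map $[A]_3 \to [A]_4$. By Lemma \ref{inj}, full WLP for $d=3$ is equivalent to injectivity of $[A]_{2d-3} \to [A]_{2d-2}$, namely $[A]_3 \to [A]_4$ (since $2d-3 = 3$ and $2d-2 = 4$ when $d=3$). So the very range already guaranteed by Theorem \ref{d1=d2=d3=d4} covers exactly the one critical degree needed, and the proposition should follow immediately.

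**Execution.** First I would record the socle degree and peak: for $d=3$ the socle degree of $A$ is $4d-4 = 8$ and the Hilbert function peaks at degree $2d-2 = 4$. By Lemma \ref{one spot} (or equivalently the self-duality discussion preceding it), WLP holds if and only if $\times L$ has maximal rank across the middle, and by Lemma \ref{inj} this reduces to showing $[I_{Z|H}]_{2d-2} = [I_{Z|H}]_4 = 0$, equivalently injectivity of $\times L\colon [A]_3 \to [A]_4$. Second, I would simply quote Theorem \ref{d1=d2=d3=d4}: with $d=3$ the bound $t < \lfloor \frac{3d+1}{2}\rfloor = 5$ gives injectivity for all $t \le 4$, and in particular for $t=4$, which is precisely the map $[A]_3 \to [A]_4$. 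Third, I would invoke self-duality of the Gorenstein algebra $A$ (Remark \ref{duality}) to upgrade injectivity in the first half to surjectivity in the second half, yielding maximal rank in every degree and hence the WLP.

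**The main obstacle.** There is essentially no obstacle here beyond bookkeeping: the content is entirely contained in Theorem \ref{d1=d2=d3=d4}, and $d=3$ is the smallest odd case where the floor bound $\frac{3d+1}{2}$ lands exactly at the critical injective degree $2d-3+1 = 2d-2$. The only point requiring care is checking that the single degree where WLP could fail, namely $t = 2d-2 = 4$, really falls strictly below $\lfloor \frac{3d+1}{2}\rfloor = 5$; since $4 < 5$ this is immediate, so the proof collapses to a one-line citation. I would therefore present the proposition as a direct corollary of Theorem \ref{d1=d2=d3=d4} together with Lemma \ref{inj}, with no further geometry or liaison arguments needed. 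The subtler cases $d=4,5$, by contrast, fall outside the range of Theorem \ref{d1=d2=d3=d4} and require the genuinely new linkage and Symmetry Principle machinery.
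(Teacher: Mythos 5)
Your proof is correct and is precisely the second of the two arguments the paper itself gives for this proposition: the paper notes that "one could simply apply Theorem \ref{d1=d2=d3=d4} for the case $d=3$ to get injectivity from degree 3 to 4, which is enough to prove WLP." (The paper's first argument instead pins down the $h$-vector of $Z$ as $(1,2,3,4,x,3,5-x)$ and eliminates $x\ne 5$ via Lemma \ref{rule out}, Proposition \ref{decreasing type} and the $O$-sequence condition, but your one-line citation of Theorem \ref{d1=d2=d3=d4} together with Lemma \ref{inj} and duality is exactly the alternative the authors endorse.)
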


\begin{proof}
We give two easy arguments based on our results.
Let $I$ be a complete intersection of four cubics, and let $Z$ be the corresponding set of 18 points.
Now $Z$ is the union of two complete intersections of type $(3,3)$. The Hilbert function of $R/I$ is
  \[
  \begin{array}{c|cccccccccccccccc}
    \text{degree} 	& 0 	& 1 	& 2  & 3 & 4 & 5 & 6 &  7 & 8 & 9  \\ \hline
    h_{R/I} 	& 1 	& 4 & 10 & 16 & 19 & 16  & 10 & 4 & 1 & 0
  \end{array}
  \]
and the $h$-vector of $Z$ has the form
  \[
  \begin{array}{c|cccccccccccccccc}
    \text{degree} 	& 0 	& 1 	& 2 	& 3 & 4 & 5 & 6 \\ \hline
    \Delta h_Z 	& 1 	& 2 	& 3 	& 4 & x & 3 & 5-x & 0
  \end{array}  \ .
  \]
Then $x$ cannot be 4 by Lemma \ref{rule out}, $x$ cannot be 3 by Proposition  \ref{decreasing type}, and it cannot be $< 3$ since then the $h$-vector would not be an $O$-sequence. Hence $x=5$ and $R/I$ has the WLP by Lemma \ref{exp hf}.

Alternatively, one could simply apply Theorem \ref{d1=d2=d3=d4} for the case $d=3$ to get injectivity from degree 3 to 4, which is enough to prove WLP.
\end{proof}

\begin{theorem} \label{d=4}
A complete intersection of four forms of degree 4 has the WLP.
\end{theorem}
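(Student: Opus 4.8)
The plan is to prove that a complete intersection $A$ of type $(4,4,4,4)$ has the WLP by showing that the only $h$-vector for $Z$ compatible with all the constraints established so far is the generic (expected) one from Lemma \ref{exp hf}. By Lemma \ref{inj} and Lemma \ref{one spot}, WLP is equivalent to $[I_Z]_{2d-2} = [I_Z]_6 = 0$, equivalently to $\Delta h_Z$ having the generic shape $(1,2,3,4,5,6,7,4,0)$ with first drop at degree $2d-1 = 7$. So I would enumerate all $O$-sequences $\Delta h_Z$ that sum to $\deg C = 2d^2 = 32$, satisfy the see-saw symmetry of Lemma \ref{seesaw} (in particular $\Delta h_Z(7) = d = 4$), are of decreasing type by Proposition \ref{decreasing type}, and are not ruled out by Lemma \ref{rule out}, and then eliminate every non-generic survivor.

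First I would assume WLP fails and invoke Lemma \ref{hf for fail by one}-style reasoning: the failures are governed by how far below the generic value $\Delta h_Z$ dips before degree $7$. The see-saw identity $\Delta h_Z(7-m) + \Delta h_Z(7+m) = 8-m$ together with the decreasing-type and $O$-sequence conditions sharply limits the possibilities. The earliest a dip can occur is controlled by Theorem \ref{d1=d2=d3=d4}: for $d=4$ (even) the initial degree of $K$ is at least $\frac{3d}{2} = 6$, so injectivity already holds for $t < 6$, meaning $\Delta h_Z(t) = t+1$ for $t \leq 5$ (Corollary \ref{a-(K)}). Thus the only room for failure is at degrees $6$ and beyond, and I would write down the (short) finite list of admissible tails, each corresponding to a specific claimed failure.

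The heart of the argument, and the step I expect to be the main obstacle, is ruling out each surviving non-generic $h$-vector using the new linkage technique advertised in the introduction. For each candidate, Davis' theorem (Theorem \ref{davis thm}) splits $Z$ into a subset $Z'$ on a curve $D$ and its complement; the \emph{Symmetry Principle} forces the points off $D$ to be split exactly evenly between $Z_1$ and $Z_2$, and the Cayley--Bacharach property of the complete intersections $Z_1, Z_2$ forces a curve containing all but a few points of $Z_i$ to contain all of $Z_i$. The difficulty is that for $d=4$ the naive point-counting used in Proposition \ref{decreasing type} and Lemma \ref{rule out} is no longer by itself decisive for every candidate, so I would instead run the two parallel chains of links $Z_1 = Y_{0,1}, Y_{1,1}, \dots$ and $Z_2 = Y_{0,2}, Y_{1,2}, \dots$, chosen as balanced (type $(2d,2d)$ etc.) links so that $Y_{i,1}$ and $Y_{i,2}$ remain numerically and geometrically indistinguishable at every stage. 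Tracking the $h$-vectors and graded Betti numbers along these links, a non-generic starting shape propagates to a configuration that violates the Symmetry Principle (e.g.\ forcing a single distinguished point or an odd split that cannot come from two interchangeable complete intersections), yielding the contradiction.

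Concretely, I would carry out the steps in the order: (1) record the generic $\Delta h_Z$ and, via Lemma \ref{seesaw} and Theorem \ref{d1=d2=d3=d4}, the finite list of admissible non-generic shapes; (2) discard those killed directly by Proposition \ref{decreasing type} or Lemma \ref{rule out}; (3) for each remaining shape, apply Davis' theorem to extract the distinguished subset and set up the balanced link sequences; (4) push the links far enough that the Symmetry Principle is contradicted, using Cayley--Bacharach at each link to control which points lie on the linking forms. The expected bottleneck is step (4): verifying that the balanced links genuinely preserve the indistinguishability of the two halves while eventually exposing an asymmetry forced by the assumed failure. Once every non-generic shape is eliminated, $\Delta h_Z$ must be generic, so $[I_Z]_6 = 0$ and $A$ has the WLP by Lemma \ref{exp hf}.
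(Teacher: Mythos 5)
Your outline reproduces the paper's strategy faithfully: Theorem \ref{d1=d2=d3=d4} forces $\Delta h_Z(t)=t+1$ for $t\le 5$, Lemma \ref{seesaw} then gives the one-parameter family $(1,2,3,4,5,6,7-c,4,c)$, decreasing type and the $O$-sequence condition leave only $c\in\{0,1,2\}$, and the non-generic survivors are to be eliminated by parallel links and the Symmetry Principle. That is exactly the paper's route, so the issue is not the approach but that the decisive steps are announced rather than performed, and you yourself flag step (4) as the ``expected bottleneck'' without resolving it.

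Concretely, two things are missing. First, the case $c=2$, i.e.\ $h$-vector $(1,2,3,4,5,6,5,4,2)$, does not need links at all: the paper splits into three subcases according to where the socle of $\bar B$ begins (degree 5 or 6) and whether $I_Z$ has a degree-7 generator; in each subcase a linear or quadratic syzygy between the two degree-6 generators forces a common factor, and Davis' theorem then produces either a single distinguished point or a residual of two points that Cayley--Bacharach plus UPP forbids. Second, and more seriously, for $c=1$ the contradiction is not a generic ``asymmetry eventually appears'': one must first pin down the minimal free resolution of $I_Z$ (the shape (\ref{mfrZ}) with an ambiguity $\epsilon\ge 0$, using Lemma \ref{socleB} to fix the socle dimension in degree $2d-1$ at $d-2$), then perform two specific links of types $(6,8)$ and $(4,5)$ whose existence must be justified (irreducibility of the sextic generator via monodromy/uniformity, Bertini for the two pencils of quartics), and the engine of the final contradiction is the fact that a redundant summand survives every mapping cone: the repeated $S(-3)$ in both free modules of the resolution of the terminal degree-4 scheme $Y_2$ cannot split off, which forces $Y_2$ to consist of three collinear points plus one distinguished point off the line --- a configuration the Symmetry Principle excludes. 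Without identifying this persistence of a non-split redundant summand through the links, the candidate $(1,2,3,4,5,6,6,4,1)$ is not actually eliminated and the proof is incomplete.
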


\begin{proof}

Let $A = R/I = R/\langle  F_1,F_2,F_3,F_4 \rangle$ where $I$ is a complete intersection and
  $\deg F_i = 4$ for all $i$.  Then we claim that  $A$ has the WLP.
   The Hilbert function of $R/I$ is
  \[
  \begin{array}{c|cccccccccccccccc}
    \text{degree} 	& 0 	& 1 	& 2  & 3 & 4 & 5 & 6 &  7 & 8 & 9 & 10 & 11 & 12 & 13  \\ \hline
    h_{R/I} 	& 1 	& 4 & 10 & 20 & 31 & 40 & 44 & 40 & 31 & 20 & 10 & 4 & 1 & 0
  \end{array}
  \]
The midpoint of the Hilbert function of $R/I$  is in degree 6, so we expect injectivity for $\times L : [A]_{t-1} \rightarrow [A]_t$ for all $t \leq 6$.

Note that $Z = Z_1 \cup Z_2$ is the general hyperplane section of $C = C_1 \cup C_2$, so both $Z_1$ and $Z_2$ are complete intersections of type $(4,4)$.
By Theorem \ref{d1=d2=d3=d4}, $\times L : [A]_{t-1} \rightarrow [A]_t$ is injective for $t < 6$. Thus $[K]_t = 0$ for $t < 6$, so by Corollary \ref{a-(K)} we have $\Delta h_Z(5) = 6$. As a result, the $h$-vector of $Z$ has the form
  \[
  \begin{array}{c|cccccccccccccccc}
    \text{degree} 	& 0 	& 1 	& 2 	& 3 & 4 & 5 & 6 & 7 & 8 & 9 & 10  \\ \hline
    \Delta h_Z 	& 1 	& 2 	& 3 	& 4 & 5 & 6 & 7-c & 4 &  c & 0 & 0
  \end{array}
  \]
  (using Lemma \ref{seesaw}).

We want to show $c=0$. Our results give us that $c$ can only be 1 or 2.
That is, we have to rule out $(1,2,3,4,5,6,5,4,2)$ and $(1,2,3,4,5,6,6,4,1)$. Notice that there is a minimal generator for $I_Z$ in degree 6 if and only if $A$ fails to have WLP, so we want to show that such  a generator cannot exist.

  We first rule out $c=2$. Suppose $\Delta h_Z$ is given by
 \[
  \begin{array}{c|ccccccccccccccccccccccccccccccccc}
    \text{degree}  & 0 	& 1 	& 2 	& 3 	& 4 & 5 & 6 & 7 & 8 & 9 \\ \hline
    \Delta h_Z 	& 1 	& 2 	& 3 	& 4 	& 5 & 6 & 5 & 4 & 2 & 0
  \end{array}
  \]
In particular, $I_Z$ has two minimal generators in degree 6 and possibly one in degree 7 (among possibly others), and $\bar B$ has socle in degree $\leq 6$ (using (\ref{hu ineq}) and Corollary \ref{a-(K)}). Clearly it is not in degree $\leq 4$.

\begin{itemize}

\item If $\bar B$ has socle in degree 5 then  the last free module in a minimal free resolution of $I_{Z}$, or equivalently $\bar{B}$, over $S$ has a direct summand $S (-7)$. So the two generators of $I_Z$ of degree 6 have a linear syzygy: $L_1 F_1 + L_2 F_2 = 0$.  This means that they have a common factor of degree 5. By Davis's theorem, all the points of $Z$ but one lie on a curve of degree 5. But this violates the symmetry of the situation: from the 4-dimensional vector space $[I]_4$ we chose two general two-dimensional subspaces to define the two curves $C_1$ and $C_2$, and we are looking at a general hyperplane section  of their union. It is impossible that one of these two curves is distinguished by this  property of containing the single point not on the curve of degree 5.

\item Now assume $\bar B$ has socle beginning in degree 6 and assume that $I_{Z}$ has two minimal generators, $F_1$ and $F_2$, of degree 6 but none of degree 7. Then we have a quadratic syzygy $Q_1 F_1 + Q_2 F_2 = 0$. This means that $F_1$ and $F_2$ have a common factor, $G$, of degree 4. Since $I_{Z}$ has no minimal generator in degree 7, $G$ is also common to the entire component in degree 7. By Davis's theorem, since $\Delta h_Z(7)=4$, all but 4 of the points of $Z$ lie on a curve of degree 4. By symmetry, two have to come from $Z_1$ and two from $Z_2$. By liaison and UPP, any curve of degree 4 containing 14 of the 16 points of $Z_1$ or of $Z_2$ contains all 16 points. Thus we have a contradiction.

\item Finally, assume that $\bar B$ has socle beginning in degree 6 and assume that $I_{Z}$ has two minimal generators, $F_1$ and $F_2$, of degree 6  and a minimal generator $G$ of degree 7. Then $S/\langle F_1, F_2 \rangle$ has Hilbert function with first difference $(1,2,3,4,5,6,5,5, \dots)$, and so Davis's theorem implies that all but one of the points lies on a curve of degree 5. Again this is impossible because of the symmetry principle.

\end{itemize}

Now we will rule out the case $c=1$. Thus, we have to show that the $h$-vector of $Z$ cannot be $(1,2,3,4,5,6,6,4,1)$.

Suppose the $h$-vector of $Z$ is $(1,2,3,4,5,6,6,4,1)$.
We first consider the socle of $\bar B$. Thanks to Remark \ref{trans socle lemma}, there is socle in degree $\leq 6$, and from the $h$-vector it can only be in degree exactly 6. Since this is the Hilbert function of an artinian algebra over $k[x,y]$, the canonical module must have exactly one generator in its initial degree and at least two generators in the second degree. This means that the socle is exactly 1-dimensional in degree 8 and at least 2-dimensional in degree 7. By Lemma \ref{socleB}  we will see that in degree $2d-1$ there must be exactly a $(d-2)$-dimensional socle, so in this case it must be exactly 2-dimensional (and not 3-dimensional, even though this is numerically possible); we will assume that here.

Hence the last free module in the minimal free resolution of $I_{Z}$ must at least have free summands $S(-8), S(-9)^2, S(-10)$. The only ambiguity is the possibility of a second summand $S(-8)$, so we will indicate this with the exponent $1+\epsilon \ (\epsilon \geq 0)$.

What about generators? Certainly there is exactly one generator in degree 6 and exactly two in degree 7. We must have  five or six minimal generators (depending on $\epsilon$), and considering degrees we see that the only possibility for the resolution is
  \begin{equation} \label{mfrZ}
  0 \rightarrow
  \begin{array}{c}
 S(-8)^{1+\epsilon} \\
  \oplus \\
S(-9)^2 \\
  \oplus \\
  S(-10)
  \end{array}
  \rightarrow
  \begin{array}{c}
  S(-6) \\
  \oplus \\
  S(-7)^2 \\
  \oplus \\
S(-8)^{2+\epsilon}
  \end{array}
  \rightarrow  I_{Z} \rightarrow 0.
  \end{equation}
  And remember that $Z$ has $h$-vector
  \[
  (1,2,3,4,5,6,6,4,1).
  \]

We will use an argument that analyzes how the Betti numbers change via a sequence of two links,
\[
Z := Y_0 \sim Y_1 \sim Y_2
\]
which we will now describe. A key idea is that each link will be viewed simultaneously as a single link and as a pair of two separate links.
{\it  The rest of the proof is going to rely on symmetry. We will perform a series of links in which we treat $Z_1$ and $Z_2$ equally, and come to a situation where one of them has different behavior than the other.} This will allow us to exclude the case $c=1$ in view of the Symmetry Principle (\ref{symm princ}).

We start with $Z = Y_0 =  Y_{0,1} \cup Y_{0,2}$, where $Y_{0,1}$ and $Y_{0,2}$ are disjoint complete intersections of type $(4,4)$ and $Z$ is the general hyperplane section of $C = C_1 \cup C_2$. We saw above that $I_Z$ has one minimal generator of degree 6, two of degree 7 and at least two of degree 8.

Both $Y_{0,1}$ and $Y_{0,2}$ are general hyperplane sections of smooth curves, so they both have UPP. If the minimal generator of $I_{Y_0}$ of degree 6 were reducible, then either it consists of a cubic containing $Y_{0,1}$ and one containing $Y_{0,2}$ (which is clearly impossible since both are complete intersections of quartics) or else there are distinguished subsets of $Y_{0,1}$ and $Y_{0,2}$ consisting of those points lying on the various factors. But this violates uniformity. So without loss of generality we can assume that the sextic generator is irreducible. Furthermore, again by uniformity, no point of  $Y_0$ is a singular point of the sextic.

The first link will be by a complete intersection of type $(6,8)$, where the curve of degree 8 is the union of a general quartic containing $Y_{0,1}$ and a general quartic containing $Y_{0,2}$. Let $F$ be the sextic generator. $Z$ is a set of $32 = 16 + 16$ smooth points of $F$. The base locus of the linear system on $F$ of quartics containing $Y_{0,1}$ is just $Y_{0,1}$, so the residual to $Y_{0,1}$ in a general element of this linear system is a set of reduced points $Y_{1,1}$ on $F$. Similarly we get a set of reduced points $Y_{1,2}$ on $F$, and $Y_{1,1} \cap Y_{1,2} = \emptyset$ by generality. Since in both cases the quartic is a minimal generator of $I_{Y_{0,i}}$, one checks that both $Y_{1,1}$ and $Y_{1,2}$ are complete intersections of type $(2,4)$.  Setting $Y_1 = Y_{1,1} \cup Y_{1,2}$, then $Y_1$ is reduced and is linked to $Y_0 = Z$. In summary,
\[
Y_{0,1} \cup Y_{0,2} = Y_0 \sim Y_1 = Y_{1,1} \cup Y_{1,2} \hbox{ \ where \ } Y_{0,1} \sim Y_{1,1} \hbox{ and } Y_{0,2} \sim Y_{1,2}.
\]
A calculation gives that $Y_1$ has $h$-vector $(1,2,3,4,4,2)$ and free resolution (after splitting two  terms in the mapping cone coming from minimal generators used in the link)
\[
0 \rightarrow
\begin{array}{c}
S(-6)^{1+\epsilon} \\
\oplus \\
S(-7)^2
\end{array}
\rightarrow
\begin{array}{c}
S(-4) \\
\oplus \\
S(-5)^2 \\
\oplus \\
S(-6)^{1+\epsilon}
\end{array}
\rightarrow I_{Y_1} \rightarrow 0.
\]
Note that if $I_{Y_1}$ had two minimal generators of degree 6 then the ideal $\bar I$ generated by the quartic and two quintics would have a common factor of degree 2. By Davis's theorem, the subscheme of $Y_1$ not lying on this conic has $h$-vector $(1,2,2)$, and so $Y_{1,1}$ and $Y_{1,2}$ behave differently: one of them contributes 2 points to this residual and the other contributes 3 points. This violates symmetry, so we conclude that $\epsilon =0$.

The second link will use curves in $I_{Y_1}$ of degrees $4,5$, where the quartic is the union of the conic containing $Y_{1,1}$ and the conic containing $Y_{1,2}$, and the residual, $Y_2$, has degree $20 - 16 = 4$ and is similarly the union of two complete intersections  of type $(1,2)$, so
\[
Y_{1,1} \cup Y_{1,2} = Y_1 \sim Y_2 = Y_{2,1} \cup Y_{2,2} \hbox{ \ where \ } Y_{1,1} \sim Y_{2,1} \hbox{ and } Y_{1,2} \sim Y_{2,2}.
\]
We first justify the existence of this link. We have seen that $I_{Y_1}$ has one minimal generator, say $G$, of degree 4 (which is the union of a conic containing $Y_{1,1}$ and a conic containing $Y_{1,2}$) and two minimal generators of degree 5. Suppose that a general choice, $H$, of quintic has a component in common with $G$. This can only happen if $H$ shares a line with each of the two conics, by symmetry (since neither $Y_{1,1}$ nor $Y_{1,2}$ can play a different role from the other). So $G$ is actually the union of four lines, with four points of $Y_1$ on each.

Now we revisit the first link. We have an irreducible curve $F$ of degree 6 containing both $Y_{0,1}$ and $Y_{0,2}$ and we have two parallel links. Consider the pencils $|4H_F - Y_{0,1}|$ and $|4H_F - Y_{1,1}|$ of divisors on $F$, where $H_F$ is the divisor on $F$ cut out by a hyperplane (a line). $Y_1$ is the union of a general element of the first pencil and a general element of the second pencil, and as we have seen, these elements are both complete intersections of type $(2,4)$. By Bertini, it is impossible for every element of both of these pencils to lie on a reducible conic. Thus the second link also exists.

What can we say about $Y_2$? It has degree 4 and is the union of a complete intersection $Y_{2,1}$ of type $(1,2)$ linked to $Y_{1,1}$ and a complete intersection $Y_{2,2}$ of type $(1,2)$ linked to $Y_{1,2}$ as above. Its $h$-vector is $(1,2,1)$. Furthermore, there is enough choice in the links so that $Y_2$ is reduced.

Next one computes that the minimal free resolution of $Y_2$ is
\[
0 \rightarrow
\begin{array}{c}
S(-4) \\
\oplus \\
S(-3)
\end{array}
\rightarrow
\begin{array}{c}
S(-3) \\
\oplus \\
S(-2)^2
\end{array}
\rightarrow I_{Y_2} \rightarrow 0.
\]
The key observation is that there can be no splitting of the $S(-3)$ in both free modules. Indeed, this resolution comes from the {\it minimal} free resolution for $I_{Y_1}$ given above, which has a redundant $S(-6)$, and the link is done with a complete intersection, say $X$, of type $(4,5)$. The generators for $I_{Y_1}$ have degrees 4, 5 and 6. Then the mapping cone relating the resolutions of $I_{Y_1}$, $I_X$ and $I_{Y_2}$ allows only splitting of the summands $S(-4)$ and $S(-5)$ from the resolution for $I_{Y_1}$. In particular, the summands $S(-6)$ occurring (redundantly) in the minimal free resolution of $I_{Y_1}$ survive to the resolution for $I_{Y_2}$, but with the twisting they both become $S(-3)$ and neither is split off.

But this minimal free resolution (with the repeated $S(-3)$) defines the union of a point and three  additional collinear points. The former point is distinguished in this set of four points. But it must arise either from the series of links $Y_{0,1}  \sim Y_{1,1} \sim Y_{2,1}$ or the series of links $Y_{0,2}  \sim Y_{1,2} \sim Y_{2,2}$ . By the symmetry of this construction, such a special, distinguished, point is impossible.
\end{proof}

Since our main result is not quite to the point of proving WLP, we also give a complete proof for the case $d=5$ even though there are no new ideas beyond the $d=4$ case.

 \begin{corollary} \label{d=5}
 Let $A = R/I = R/\langle  F_1,F_2,F_3,F_4 \rangle$ where $I$ is a complete intersection and
   $\deg F_i = 5$ for all $i$.  Then $A$ has the WLP.
 \end{corollary}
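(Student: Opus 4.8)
The plan is to mimic the structure of the proof of Theorem \ref{d=4}, relying on Theorem \ref{d1=d2=d3=d4} to pin down the first half of the $h$-vector and then using a sequence of symmetric links to rule out the failure cases. First I would record the Hilbert function of $A = R/I$ (socle degree $4d-4 = 16$, peak in degree $2d-2 = 8$) and use Theorem \ref{d1=d2=d3=d4}, which gives injectivity for $t < \lfloor \frac{3d+1}{2} \rfloor = 8$. Thus $[K]_t = 0$ for $t \le 7$, and by Corollary \ref{a-(K)} together with Lemma \ref{seesaw} the $h$-vector of $Z = Z_1 \cup Z_2$ (where $Z_1, Z_2$ are complete intersections of type $(5,5)$) is forced to be
\[
(1,2,3,4,5,6,7,8,9-c,5,c,0,\dots)
\]
in degrees $0,1,\dots$, with the value $9-c$ in degree $8$, the value $d = 5$ in degree $2d-1 = 9$, and the value $c$ in degree $10$. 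The WLP for $A$ is equivalent to $c = 0$ by Lemma \ref{exp hf}, so I must rule out every positive value of $c$ permitted by the previous results.

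Next I would invoke the Socle Lemma in the form of Remark \ref{trans socle lemma} together with Proposition \ref{decreasing type}, Lemma \ref{rule out}, and the requirement that $\Delta h_Z$ be an $O$-sequence, in order to cut the list of numerically possible $h$-vectors down to a small finite set (the analogue of ruling out $(1,2,3,4,5,6,5,4,2)$ and forcing attention onto the cases where $I_Z$ acquires a minimal generator in degree $2d-2 = 8$). For each surviving value of $c$ I would read off, as in \eqref{mfrZ}, the constraints the socle of $\bar B$ places on the last free module in the minimal free resolution of $I_Z$, using Remark \ref{soc of h1} and the socle result referenced as Lemma \ref{socleB}. As in the $d=4$ argument, I expect the irreducibility of the lowest-degree generator and the absence of singular points of $Z$ on it to follow from the UPP of $Z_1$ and $Z_2$ and the Symmetry Principle.

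The heart of the argument is then a parallel sequence of balanced links
\[
Z = Y_0 \sim Y_1 \sim \cdots \sim Y_r, \qquad Y_i = Y_{i,1} \cup Y_{i,2}, \quad Y_{i,1} \sim Y_{i+1,1}, \ Y_{i,2} \sim Y_{i+1,2},
\]
chosen so that at every stage $Y_{i,1}$ and $Y_{i,2}$ remain indistinguishable geometrically and numerically, exactly as in the Symmetry Principle \eqref{symm princ}. Because $d=5$ is larger than $4$, the forced free resolution has more terms and the $h$-vectors are longer, so I anticipate needing one or two additional links before a redundant summand in the minimal free resolution of some $Y_i$ descends through the mapping cone and survives (un-split) to the resolution of the final $Y_r$, exhibiting a distinguished point or distinguished small subset in a scheme of four or fewer points. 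Such a distinguished object must come from either the $Y_{\bullet,1}$ chain or the $Y_{\bullet,2}$ chain alone, contradicting symmetry and forcing $c=0$. The main obstacle I foresee is purely bookkeeping rather than conceptual: there are more candidate values of $c$ and more candidate resolution shapes (tracked by parameters like the $\epsilon$ of the $d=4$ proof), so the tracking of Betti numbers through each link, and the verification at each stage that the two parallel pieces are genuinely complete intersections of matching type so that symmetry applies, is lengthier. Since the authors explicitly note that ``there are no new ideas beyond the $d=4$ case,'' I would organize the write-up to reuse the $d=4$ lemmas verbatim and present only the case-by-case numerics that differ.
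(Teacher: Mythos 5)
Your plan is correct and follows essentially the same route as the paper's proof: Theorem \ref{d1=d2=d3=d4} reduces the problem to injectivity in degree $8$, the $h$-vector of $Z$ is forced to be $(1,2,3,4,5,6,7,8,9-c,5,c)$ with $0\le c\le 3$ by decreasing type, and each case $c=1,2,3$ is eliminated by a chain of balanced parallel links whose final residual exhibits a distinguished subscheme contradicting the Symmetry Principle. The bookkeeping you defer (socle dimensions, the $\epsilon$-tracking through the mapping cones, and the existence of each link) is exactly what the paper's case analysis supplies, with the $c=1$ case requiring three links down to a degree-$4$ scheme and the $c=2,3$ cases terminating after two.
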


 \begin{proof}
 The argument is very similar to that of Theorem \ref{d=4}, with a few minor differences. Now the midpoint of the Hilbert function of $R/I$ is in degree 8, and we expect injectivity for $\times L : [A]_{t-1} \rightarrow [A]_t$ for all $t \leq 8$. In this case Theorem \ref{d1=d2=d3=d4} gives it to us for $t \leq 7$, so we only have to prove it for $t=8$. Now we get that $\Delta h_Z$ must have the form
  \[
   \begin{array}{c|ccccccccccccccccccccccccccccccccc}
     \text{degree}  & 0 	& 1 	& 2 	& 3 	& 4 & 5 & 6 & 7 & 8 & 9 & 10 & 11 \\ \hline
     \Delta h_Z 	& 1 	& 2 	& 3 	& 4 	& 5 & 6 & 7 & 8 & 9-c & 5 & c & 0
   \end{array}
   \]
 and we must have $0 \leq c \leq 3$ to preserve decreasing type.  Again we are trying to show that $c=0$, and to show this we are supposing $c >0$ in order to obtain a contradiction. And this means that there is an unexpected generator in degree 8, which we will use and which will lead to the contradiction.

Note first that for such $c$ there is a  $c$-dimensional vector space of forms of degree 8 containing $Z$ (which is the smallest such degree).
 As in the proof of Theorem~\ref{d=4}, the general such element is reduced and irreducible and smooth at the points of $Z$, since (a) both $Z_1$ and $Z_2$ have UPP and lie on no curve of degree $\leq 4$, and (b) by symmetry neither can behave in a way different from the other.

Since we will again have to follow a sequence of  parallel links, we maintain the notation that $Z = Y_0 = Y_{0,1} \cup Y_{0,2}$, a disjoint union of complete intersections of type $(5,5)$. We link using a general element of degree 8 (which is irreducible) and the union of two forms of degree 5, being general elements of $[I_{Y_{0,1}}]_5$ and $[I_{Y_{0,2}}]_5$.

 \vspace{.1in}

 \noindent \underline{Case 1}: $c=3$.

 In this case the $h$-vector of $Y_0$ is
 \[
  \begin{array}{c|ccccccccccccccccccccccccccccccccc}
    \text{degree}  & 0 	& 1 	& 2 	& 3 	& 4 & 5 & 6 & 7 & 8 & 9 & 10 & 11 \\ \hline
    \Delta h_{Y_0} 	& 1 	& 2 	& 3 	& 4 	& 5 & 6 & 7 & 8 & 6 & 5 & 3
  \end{array}
  \]
  and the $h$-vector of the residual $Y_1$ is $(1,2,3,4,5,6,4,3,2)$. Because of irreducibility and the abundance of choices for the link, we can assume that $Y_1$ lies on an irreducible sextic. We note that $Y_1$ is the union of two complete intersections of type $(3,5)$. Now we link $Y_1$ using two sextics, one of which is irreducible and the other is the union of the cubic containing $Y_{1,1}$ and the cubic containing $Y_{1,2}$. The residual has $h$-vector $(1,2,1,1,1)$, meaning that there are exactly five points on a line and one point not on the line. But this violates symmetry since the distinguished point must either come originally from $Y_{0,1}$ or from $Y_{0,2}$. Contradiction.

   \vspace{.1in}

 \noindent \underline{Case 2}: $c=2$.

Now the $h$-vector of $Y_0$ is
 \[
  \begin{array}{c|ccccccccccccccccccccccccccccccccc}
    \text{degree}  & 0 	& 1 	& 2 	& 3 	& 4 & 5 & 6 & 7 & 8 & 9 & 10 & 11 \\ \hline
    \Delta h_{Y_0} 	& 1 	& 2 	& 3 	& 4 	& 5 & 6 & 7 & 8 & 7 & 5 & 2
  \end{array}
  \]
  and the $h$-vector of the residual $Y_1$ is $(1,2,3,4,5,6,5,3,1)$. But note that $Y_1$ is still the union of two complete intersections of type $(3,5)$. Linking again using two sextics, one of which is irreducible and the other is the union of the cubic containing $Y_{1,1}$ and the cubic containing $Y_{1,2}$, we obtain a residual that is the union of two complete intersections of type $(1,3)$ and has $h$-vector $(1,2,2,1)$.

Thanks to Lemma \ref{socle lemma}, $\bar B$ has socle in degree $\leq 8$. It cannot be in degree $\leq 6$. If there were socle in degree 7 then the minimal free resolution would have a summand $S(-9)$ in the second free module, meaning that the two generators of degree 8 have a linear syzygy and hence a common factor. But we know they form a regular sequence, so this is impossible. Hence there is a summand $S(-10)^{1+\alpha}$ in the second free module in the resolution for $I_{Y_0}$.

Up to twist, the artinian reduction (whose Hilbert function is the $h$-vector of $Y_0$) has dual given by its canonical module, and the socle elements of the artinian reduction correspond to generators for the canonical module. Thus there is  a 2-dimensional socle in degree 10 (the initial degree of the canonical module, up to twist). Since the canonical module is a module over a polynomial ring in two variables, the two generators in minimal degree span at most a 4-dimensional component in the second degree. Thus we deduce from the equality $\Delta h_{Y_0}=5$  that there is at least a 1-dimensional socle in degree 9. Turning to minimal generators, in addition to the ones mentioned above there may be generators in degrees 10 and 11. So the minimal free resolution has the shape
\[
0 \rightarrow
\begin{array}{c}
S(-10)^{1+\alpha} \\
\oplus \\
S(-11)^{1 + \beta} \\
\oplus \\
S(-12)^2
\end{array}
\rightarrow
\begin{array}{c}
S(-8)^2 \\
\oplus \\
S(-9) \\
\oplus \\
S(-10)^\gamma \\
\oplus \\
S(-11)^\delta
\end{array}
\rightarrow I_{Y_0} \rightarrow 0.
\]
Since the twists have to add to the same thing in both free modules, we obtain the equation
\[
20 = 10(\gamma-\alpha) + 11(\delta-\beta).
\]
We conclude that $\beta = \delta$ and $\gamma = 2+ \alpha $.
Our first link uses a generator of degree 8 and one of degree 10 to get a residual $Y_1$ with $h$-vector $(1,2,3,4,5,6,5,3,1)$.  Computing the mapping cone and splitting the summands $S(-8)$ and $S(-10)$ we obtain for the residual $Y_1$ the minimal free resolution
\[
0 \rightarrow
\begin{array}{c}
S(-7)^\beta \\
\oplus \\
S(-8)^{1+\alpha} \\
\oplus \\
S(-9) \\
\oplus \\
S(-10)
\end{array}
\rightarrow
\begin{array}{c}
S(-6)^2 \\
\oplus \\
S(-7)^{1+\beta} \\
\oplus \\
S(-8)^{1+\alpha}
\end{array}
\rightarrow I_{Y_1} \rightarrow 0.
\]
(In particular there is a redundant $S(-8)$ that does not split off.) As in the case $c=3$, $Y_1 = Y_{1,1} \cup Y_{1,2}$, the union of two complete intersections of type $(3,5)$.

Note that to obtain $Y_{1,1}$ and $Y_{1,2}$ we found the residual to $Y_{0,1}, Y_{0,2}$ respectively on the irreducible curve of degree 8 cut out by the two pencils of quintics, each of which has as base locus the sets $Y_{0,1}, Y_{0,2}$. So $Y_{1,1}$ and $Y_{1,2}$ have UPP and in particular $Y_1$ lies in a complete intersection of type $(6,6)$. The residual, $Y_2$, has $h$-vector $(1,2,2,1)$ and is the union of two complete intersections of type $(1,3)$. Its minimal free resolution has the form
\[
0 \rightarrow
\begin{array}{c}
S(-4)^{1+\alpha}  \\
\oplus \\
S(-5)^{1+\beta} \\
\end{array}
\rightarrow
\begin{array}{c}
S(-2) \\
\oplus \\
S(-3) \\
\oplus \\
S(-4)^{1+\alpha} \\
\oplus \\
S(-5)^{\beta}
\end{array}
\rightarrow I_{Y_2} \rightarrow 0.
\]
From this we see immediately that $\alpha = \beta = 0$ since the $h$-vector is $(1,2,2,1)$. But the fact that there is a generator in degree 4 forces four points on a line, which is impossible since if two points from each of $Y_{2,1}$ and $Y_{2,2}$ lie on this line then all of $Y_2$ lies on the line, and otherwise we have a violation of the symmetry principle.

This concludes the case $c=2$.

 \vspace{.1in}

 \noindent \underline{Case 3}: $c=1$.

 We now show that $c=1$ is impossible. Now the $h$-vector $h_Z$ is
 \[
 (1,2,3,4,5,6,7,8,8,5,1).
 \]
 There is at least one socle element in degree 8 (and none earlier), at least three in degree 9 (considering the canonical module of $\bar B$ as in the computation of the socle for the artinian reduction of $S/I_{Y_0}$ in Case 2), and exactly one in degree 10. In terms of minimal generators for $I_Z$, we have  exactly one in degree 8 and exactly three in degree 9. There may be some in degree 10, but as before (using UPP of the two subsets) we can rule out any of degree 11. So far, to build the minimal free resolution, we have
  \[
 0 \rightarrow
 \begin{array}{c}
 S(-10)^{1+\epsilon_1} \\
 \oplus \\
 S(-11)^{3+\epsilon_2} \\
 \oplus \\
 S(-12)
 \end{array}
 \rightarrow
 \begin{array}{c}
 S(-8) \\
 \oplus \\
 S(-9)^3 \\
 \oplus \\
 S(-10)^{\epsilon_3}
 \end{array}
 \rightarrow  I_Z \rightarrow 0.
 \]
 Since the sum of the twists of the first free module must equal the sum of the twists of the second one, we get
 \[
 55 + 10 \epsilon_1 + 11 \epsilon_2 = 35 + 10 \epsilon_3, \ \hbox{ i.e. } \ 20 = 10(\epsilon_3 - \epsilon_1) - 11 \epsilon_2.
 \]
 This forces $\epsilon_2 = 0$ and $\epsilon_3 - \epsilon_1 = 2$ (so in particular $\epsilon_3 \geq 2$), i.e. setting $\epsilon := \epsilon_1$ we have the minimal free resolution is
 \[
 0 \rightarrow
 \begin{array}{c}
 S(-10)^{1+\epsilon} \\
 \oplus \\
 S(-11)^3 \\
 \oplus \\
 S(-12)
 \end{array}
 \rightarrow
 \begin{array}{c}
 S(-8) \\
 \oplus \\
 S(-9)^3 \\
 \oplus \\
 S(-10)^{2+\epsilon}
 \end{array}
 \rightarrow  I_Z \rightarrow 0.
 \]
 (To prove that $\epsilon_2=0$ we could also have invoked the argument at the beginning of section \ref{strategy}, as we did in Theorem \ref{d=4}, but it seemed simpler to use the numerical argument above.)

 Now we play the same game as above.  We have $Z = Y_0 = Y_{0,1} \cup Y_{0,2}$ and we link $Y_0 \sim Y_1$ where the octic is irreducible and the 10-ic is the union of a general element of degree 5 in $[I_{Y_{0,1}}]_5$ and a general element of degree 5 in $[I_{Y_{0,2}}]_5$.
This  links $Y_0$ to a residual $Y_1$ that is the union of two complete intersections, $Y_{1,1}$ and $Y_{1,2}$, of type $(3,5)$ and has $h$-vector $ (1,2,3,4,5,6,6,3)$ and minimal free resolution
  \[
 0 \rightarrow
 \begin{array}{c}
 S(-9)^3 \\
 \oplus \\
 S(-8)^{1+\epsilon} \\
 \end{array}
 \rightarrow
 \begin{array}{c}
 S(-8)^{1+\epsilon} \\
 \oplus \\
 S(-7)^3 \\
 \oplus \\
 S(-6)
 \end{array}
 \rightarrow  I_{Y_1} \rightarrow 0
 \]
 where the redundant $S(-8)$ does not split. We also know that  the generator of degree 6 is the product of the cubics in $I_{Y_{1,1}}$ and $I_{Y_{1,2}}$.

 Now we link $Y_1$ to $Y_2$ using the above-mentioned sextic and a general element of degree 7 in $I_{Y_1}$ (which will be irreducible). The residual, $Y_2$, has $h$-vector $(1,2,3,4,2)$ and is the union of two complete intersections of type $(2,3)$. Its minimal free resolution is
   \[
 0 \rightarrow
 \begin{array}{c}
 S(-6)^2 \\
 \oplus \\
 S(-5)^{1+\epsilon} \\
 \end{array}
 \rightarrow
 \begin{array}{c}
 S(-5)^{1+\epsilon} \\
 \oplus \\
 S(-4)^3
 \end{array}
 \rightarrow  I_{Y_2} \rightarrow 0
 \]
We note in passing that we must have $\epsilon=0$. Indeed, from the Hilbert function we see $\epsilon \leq 1$. If $\epsilon=1$ then there are two minimal generators in degree 5. Setting $J$ to be the ideal generated by the three quartics in the artinian reduction, the Hilbert function of $\bar R/J$ is $(1,2,3,4,2,2,\dots)$ which forces the quartics to have a common factor of degree 2. By Davis's theorem, there are three points of $Y_2$ not on this conic, which violates symmetry.

We would like to link $Y_2$ using two quartics, and we see now that this can only fail to be possible if the component in degree 4 has a common factor of degree 1. We again set $J$ to be the ideal generated by the three quartics in the artinian reduction. Then $\bar R/J$ has Hilbert function $(1,2,3,4,2,1,1, \dots)$. Since $\bar B$ has Hilbert function $(1,2,3,4,2)$, we conclude that $Z$ has  five points on the linear common factor. Again this is impossible by symmetry.

We conclude that we can link  using two quartics, one of which is the obvious union of two conics, to get a residual $Y_3$ that is the union of two complete intersections of type $(1,2)$ and has $h$-vector $(1,2,1)$ and minimal free resolution
 \[
 0 \rightarrow
 \begin{array}{c}
 S(-4) \\
 \oplus \\
 S(-3) \\
 \end{array}
 \rightarrow
 \begin{array}{c}
 S(-3) \\
 \oplus \\
 S(-2)^2 \\
 \end{array}
 \rightarrow  I_{Y_3} \rightarrow 0
 \]
This forces $Y_3$ to have a subscheme of degree 3 lying on a line, and again we have a violation of the symmetry principle.
 \end{proof}


\section{A strategy for degree 6 and higher} \label{strategy}

The results of sections \ref{measuring failure} and \ref{curve section} give strong restrictions on the possible Hilbert function of the general hyperplane section of $C$ and consequently on the possible behavior of $R/I$ from the point of view of the WLP. However, we also saw in section \ref{small d} that as $d$ increases, the cases that one has to check to prove WLP become overwhelming. Even for $d=4$ and 5 it was complicated, and we did not push beyond that point.

Nevertheless, there is strong indication that this approach will work for arbitrary $d$. In this section we give a detailed outline of how a proof should proceed, refining the approach that we used for $d=4,5$.
Unfortunately, we are forced to leave open two points on which the whole proof will ultimately rest, which we will label as formal conjectures. The first is a reduction step, and the second is a technical point that we have not yet resolved.

The first conjecture is that it is enough to check the case where WLP fails by one.

\begin{conjecture} \label{force exactly one}

If there is a complete intersection $J \in  CI(d,d,d,d)$ (see Definition \ref{def CI(d,d,d,d)}) that fails WLP by more than one then there also exists a complete intersection $I \in CI(d,d,d,d)$ that fails WLP by exactly one.

More precisely, we conjecture that the locus $X \subset CI(d,d,d,d)$ of complete intersections that fail by more than one is contained in the closure of the locus $Y$ of complete intersections that fail by exactly one.

\end{conjecture}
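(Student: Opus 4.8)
The plan is to treat the conjecture as a semicontinuity-plus-deformation statement about the degeneracy locus of the multiplication map, and to reduce it to a single transversality (equivalently, tangent-space) assertion that is the genuine difficulty.

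First I would set up the relevant upper semicontinuous invariant. For $I \in CI(d,d,d,d)$ and $L \in (\mathbb P^3)^*$ write $c(I,L) = \dim \operatorname{coker}(\times L \colon [A]_{2d-2}\to[A]_{2d-1})$, and let $c(I)=\min_L c(I,L)$ be the generic value, which is the quantity called $m$ in the lemma defining $U_I$. Because every $I\in CI(d,d,d,d)$ has the same Hilbert function, the source and target of $\times L$ have constant dimension over the incidence space $\mathcal X = CI(d,d,d,d)\times(\mathbb P^3)^*$, so the multiplication maps assemble into a morphism of trivial bundles on $\mathcal X$ and $c(\cdot,\cdot)$ is upper semicontinuous there. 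Now $\{I : c(I)\ge k\}$ is exactly the set of $I$ whose whole fibre $\pi^{-1}(I)$ lies in the closed set $\{c(\cdot,\cdot)\ge k\}$, for the projection $\pi\colon\mathcal X\to CI(d,d,d,d)$. Since $\pi$ is an open morphism, the open set $\mathcal X\setminus\{c(\cdot,\cdot)\ge k\}$ has open image, so its complement $\{I : c(I)\ge k\}$ is closed; that is, $c$ is upper semicontinuous on $CI(d,d,d,d)$. Writing $W=\{c\ge 1\}$ (the full failure locus), $X=\{c\ge 2\}$ and $Y=\{c=1\}=W\setminus X$, the desired statement $X\subseteq\overline Y$ is therefore equivalent to the assertion that $Y$ is dense in $W$, i.e. that no irreducible component of the closed set $W$ lies entirely in $X$, equivalently that the generic point of every component of $W$ fails WLP by exactly one.

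Second, I would model $W$ and $X$ determinantally. After trivializing, recording the matrix of $\times L$ for general $L$ defines, on the locus where a maximal minor is nonzero, a classifying morphism $\phi\colon CI(d,d,d,d)\dashrightarrow \operatorname{Mat}_{s_2\times s_1}$, where $s_1=\dim[A]_{2d-2}$ and $s_2=\dim[A]_{2d-1}$. If $V_k\subset\operatorname{Mat}_{s_2\times s_1}$ denotes the generic determinantal locus of matrices of corank $\ge k$, then $W=\phi^{-1}(V_1)$, $X=\phi^{-1}(V_2)$ and $Y=\phi^{-1}(V_1\setminus V_2)$. In the ambient matrix space the inclusion we want is automatic: each $V_k$ is irreducible with $V_{k+1}=\operatorname{Sing}(V_k)$, so the open stratum is dense and $V_2\subseteq\overline{V_1\setminus V_2}$. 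Thus the conjecture follows once one knows that $\phi$ meets the stratification $V_2\subset V_1$ transversally enough that pullback commutes with this closure, i.e. that every irreducible component of $\phi^{-1}(V_1)$ dominates a component of $V_1$ meeting the open stratum $V_1\setminus V_2$.

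Third -- and this is where I expect the real obstacle -- I would try to prove that density by an explicit deformation. Given $J\in X$, the linear-algebra content is easy: a generic rank-one perturbation of the matrix $\phi(J)$ drops its corank by exactly one, producing a nearby matrix in $V_1\setminus V_2$. The difficulty is realizing such a perturbation as an actual tangent direction to $CI(d,d,d,d)$ at $J$, i.e. by moving the generators $F_1,\dots,F_4$ within degree $d$ while keeping them a regular sequence and keeping $L$ general. Concretely, one must compute the differential $d\phi_J$ and show that its image meets the corank-lowering directions transversally to $V_2$; equivalently, that the tangent cone to $W$ at $J$ is not forced, by the Koszul syzygies of $A$, to lie inside the more degenerate locus $X$. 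I would attack this by writing the perturbation of $\times L$ induced by $F_i\mapsto F_i+\varepsilon G_i$ in terms of multiplication by the $G_i$ on $A$, and then trying to show that as the $G_i$ range over $[R]_d$ the resulting maps span a subspace of $\operatorname{Hom}([A]_{2d-2},[A]_{2d-1})$ large enough to contain a direction that lowers the corank by exactly one. Controlling this image uniformly in $d$ is precisely the step I cannot complete: the symmetry and self-duality of $A$ might conceivably force the reachable perturbations to drop the corank only in larger jumps, which is exactly the scenario the conjecture asserts cannot happen. A parallel, and equally blocked, route is the liaison/hyperplane-section approach of this paper -- deform $C=C_1\cup C_2$ so as to change the $h$-vector of $Z$ by the smallest increment permitted by Lemma~\ref{seesaw}, Proposition~\ref{decreasing type} and Lemma~\ref{hf for fail by one} -- but here too one must show that such a one-step change is realizable by hyperplane sections of unions of two complete intersections, which reduces to the same tangent-space computation.
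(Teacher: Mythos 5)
This statement is one of the two conjectures the paper explicitly leaves open (the authors say so in Section~\ref{strategy}), so there is no proof in the paper to compare yours against; the authors only offer a one-paragraph remark locating the difficulty. Your proposal is likewise not a proof, and to your credit you say so: the third step is an acknowledged gap, so what you have written is a correct reduction of the conjecture to an open problem, not a solution of it.

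That said, the reduction itself is sound and worth assessing. Your semicontinuity argument for $c(I)=\min_L c(I,L)$ is correct (the complement of $\{c\ge k\}$ is the image of an open set under the open projection $\pi$), and it formalizes what the paper only gestures at with the openness of $U_I$; your observation that $X\subseteq\overline Y$ is equivalent to no irreducible component of $W=\{c\ge1\}$ lying entirely inside $X$ is also right. Two caveats. First, your determinantal model is stated loosely: there is no single ``general $L$'' valid for all $I$, so the classifying map $\phi$ is really a family $\phi_L$ (or a map on the incidence variety $\mathcal X$), and the pullback $\phi_L^{-1}(V_1\setminus V_2)$ for fixed $L$ need not equal $Y$ --- it can contain points $I$ where $c(I,L)=1$ but $c(I)=0$ because $L$ fails to be general for $I$. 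Second, and relatedly, the gap you identify is not quite the one the authors identify. They assert that the rank-one perturbation step you say you cannot complete is ``not too hard'': starting from $J$ and $L$ general for $J$, one can deform to an $I$ with $\dim\operatorname{coker}(\times L)=1$. Their obstruction is precisely the discrepancy just described --- one cannot show that $L$ remains general for the deformed $I$, i.e.\ that the constructed $I$ lands in $Y$ rather than in the WLP locus. So your tangent-space computation, even if completed, would still leave the authors' obstruction in place unless you carry it out on the incidence variety $\mathcal X$ and control the fibre over $I$ of the locus $\{c(\cdot,\cdot)\ge1\}$, not just a single point of it. In short: reasonable strategy, genuinely incomplete, and the residual difficulty is (at least) the one the paper already names.
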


 Of course the idea is to show that neither $J$ nor $I$ actually exist, i.e. that $X$ and $Y$ are empty.
Here is the difficulty to proving our conjecture. We start with the complete intersection $J$ and choose a linear form $L$ that is general for $J$. It is not too hard to show that then there is a complete intersection $I$ for which $\times L$ fails WLP by one. But it is not clear why $L$ is also general for $I$.

\begin{quotation}
{\it
Assuming Conjecture \ref{force exactly one}, we can assume now without loss of generality that $R/I$ fails the WLP by exactly one, and seek a contradiction.}
\end{quotation}

\noindent It is important to note that the cases $d=4,5$ did not make such an assumption; those proofs are complete.

As before we denote by $Z $ the general hyperplane section of the union $C$ of two complete intersection curves $C_1, C_2$ obtained by taking two general 2-dimensional subspaces of the 4-dimensional vector space spanned by $F_1, F_2, F_3, F_4$. Of course $Z$ is the union of two complete intersection sets of points in $\mathbb P^2$.

By Lemma \ref{hf for fail by one}, the stated failure of injectivity translates to a Hilbert function $\Delta h_Z$ as follows.
 \[
  \begin{array}{c|ccccccccccccccccccccccccccccccccc}
    \text{degree}  & 0 	& 1 	& 2 	& 3 	& \dots & 2d-3 & 2d-2 & 2d-1& 2d & 2d+1 \\ \hline
    \Delta h_Z	& 1 	& 2 	& 3 	& 4 	& \dots & 2d-2 & 2d-2 & d & 1 & 0.
  \end{array}
  \]
As in Corollary \ref{d=5}, we first describe the minimal free resolution of $I_Z$, and we start by considering the socle of the artinian reduction of $B = S/I_Z$.  It is clear that there is a one-dimensional socle in degree $2d$. In degree $2d-1$ there is either a $(d-2)$-dimensional socle or a $(d-1)$-dimensional socle.

\begin{lemma}\label{socleB}
The artinian reduction of $B = S/I_Z$ has a  $(d-2)$-dimensional socle.
\end{lemma}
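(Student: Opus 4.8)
The plan is to transfer the question about $\Soc(\bar B)$ to the cokernel $D = A/(L)$ of multiplication by $L$ on $A$, and then to exclude the larger, $(d-1)$-dimensional possibility by a base-point-free argument on a line in $\mathbb P^3$.

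First I would pin down the socle dimension in degree $2d-1$. For $t \ge 2d-2$ we have $H^2(\mathcal I_C(t-1)) = 0$, so (\ref{std exact}) gives $H^1(\mathcal I_{Z|H}(t)) \cong D_t$; hence $H^1_\ast(\mathcal I_{Z|H})$ and $D$ agree as $S$-modules in all degrees $\ge 2d-2$. Combining this with Remark \ref{soc of h1} yields
\[
\dim_k [\Soc(\bar B)]_{2d-1} = \dim_k [\Soc(D)]_{2d-2}.
\]
A direct computation from Lemma \ref{hf for fail by one} (together with the self-duality of $A$) gives $\dim_k D_{2d-2} = d+1$ and $\dim_k D_{2d-1} = 1$, so $[\Soc(D)]_{2d-2}$ is the kernel of the map $D_{2d-2} \to \operatorname{Hom}_k(S_1, D_{2d-1})$ into a $3$-dimensional space. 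Its dimension is $d-2$ exactly when this map is surjective, and $d-1$ exactly when some nonzero linear form $m \in S_1$ satisfies $m \cdot D_{2d-2} = 0$. Thus it suffices to exclude such an $m$. (This is the same as excluding a minimal generator of $I_Z$ in degree $2d+1$; note that the purely homological constraint $\sum a_i = \sum b_j$ on the Hilbert--Burch resolution of $I_Z$ is satisfied identically in both cases, so a genuinely geometric input is needed here.)

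Suppose, for contradiction, that $m \cdot D_{2d-2} = 0$. Lift $m$ to a linear form on $\mathbb P^3$ independent of $L$, and consider $A/(L,m) = D/(mD)$. Since $\times m \colon D_{2d-2} \to D_{2d-1}$ is the zero map while $D_{2d-1} \ne 0$, we get $[A/(L,m)]_{2d-1} = D_{2d-1} \ne 0$. Let $\ell_0 = \{L = m = 0\} \subset \mathbb P^3$ be the associated line, so that $R/(L,m) \cong k[u,v]$ is the coordinate ring of $\ell_0 \cong \mathbb P^1$ and $A/(L,m) = k[u,v]/\langle \bar F_1, \dots, \bar F_4 \rangle$, where $\bar F_i$ is the restriction of $F_i$ to $\ell_0$.

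The contradiction now comes from $V(I) = \varnothing$. Because $I$ is artinian, no point of $\mathbb P^3$---in particular no point of $\ell_0$---is a common zero of $F_1, \dots, F_4$, so $\bar F_1, \dots, \bar F_4$ span a base-point-free system (necessarily of dimension $\ge 2$, since a binary form of positive degree has a zero) of binary forms of degree $d$ on $\ell_0$. Such a system contains a coprime pair $\bar G_1, \bar G_2$: otherwise every member would share a zero with a fixed member $\bar G_1$, hence vanish at one of the finitely many points of $V(\bar G_1)$, forcing the whole system into a single hyperplane $\{g : g(p) = 0\}$ and making $p$ a base point. Then $(\bar G_1, \bar G_2)$ is a regular sequence in $k[u,v]$ cutting out a complete intersection of type $(d,d)$, whose socle degree is $2d-2$; hence $[k[u,v]/(\bar G_1, \bar G_2)]_{2d-1} = 0$, and since $A/(L,m)$ is a quotient of this ring we obtain $[A/(L,m)]_{2d-1} = 0$, a contradiction. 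Therefore no such $m$ exists and the socle in degree $2d-1$ is $(d-2)$-dimensional.

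The step I expect to be the crux is the passage from a spurious socle element to the vanishing $m \cdot D_{2d-2} = 0$ and its geometric consequence: the homological bookkeeping is by itself inconclusive, and the essential point is that $V(I) = \varnothing$ forces the restricted generators on $\ell_0$ to be base-point-free, hence to contain a regular sequence whose complete intersection already vanishes in degree $2d-1$.
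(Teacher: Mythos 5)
Your proof is correct, but it follows a genuinely different route from the paper's. The paper stays inside the plane: it assumes the socle in degree $2d-1$ is $(d-1)$-dimensional, quotients $\bar B$ by that socle to get an algebra whose Hilbert function equals $1$ in degrees $2d-1$ and $2d$, extracts a linear common factor, and then invokes Davis's theorem to place $2d+1$ points of $Z$ on a line, contradicting the uniform position property of $Z_1$ and $Z_2$. You instead transfer the question to the cokernel $D=A/LA$ via the identification $D_t\cong H^1(\mathcal I_{Z|H}(t))$ for $t\ge 2d-2$ together with Remark \ref{soc of h1}, reduce the $(d-1)$-dimensional alternative to the existence of a linear form $m$ with $m\cdot D_{2d-2}=0$, and kill that possibility by restricting $I$ to the line $\{L=m=0\}$: artinian-ness makes the restricted degree-$d$ system base-point-free on $\mathbb P^1$, hence it contains a coprime pair whose complete intersection already vanishes in degree $2d-1$, contradicting $D_{2d-1}\ne 0$. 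All the supporting steps check out ($\dim D_{2d-2}=d+1$ and $\dim D_{2d-1}=1$ follow from Lemma \ref{hf for fail by one}; the vanishing $H^2(\mathcal I_C(t-1))=0$ for $t\ge 2d-2$ makes the module identification legitimate; a base-point-free system over an infinite field cannot be a finite union of proper hyperplanes). Your approach buys something real: it uses only that $I$ is artinian---no Davis, no UPP---and in fact proves the stronger statement that $m\cdot D_{2d-2}=D_{2d-1}$ for \emph{every} nonzero $m\in S_1$; the paper's argument, by contrast, is of a piece with the liaison/UPP machinery used throughout Section \ref{strategy}. One cosmetic point: your phrase ``$d-1$ exactly when some nonzero $m$ satisfies $m\cdot D_{2d-2}=0$'' conflates the cases $\dim\operatorname{im}\phi=2$ and $\dim\operatorname{im}\phi=1$, but since you exclude all such $m$ this does not affect the argument.
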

\begin{proof}
 Suppose the dimension of the socle in degree $2d-1$ were $(d-1)$-dimensional. Call $\bar B$ the artinian reduction of $B$, whose Hilbert function is $\Delta h_Z$. Quotienting out its $(d-1)$-dimensional socle in degree $2d-1$, we get an algebra $k[x,y]/J$ whose Hilbert function in degree $2d-1$ and $2d$ is equal to 1. Thus $[J]_{2d}$ has a linear gcd, which lifts to a gcd of $I_Z$. A theorem of Davis (\cite{davis}, Theorem 4.1; see also \cite{BGM}, Theorem 2.4) then gives that $Z$ contains $2d+1$ points on a  line, which is absurd since $Z$ is the union of two sets of points with the uniform position property. This proves that the dimension of the socle in degree $2d-1$ is $d-2$, as claimed.
\end{proof}

Finally, as before, there has to be non-zero socle in degree $2d-2$ since there is a curve of degree $2d-2$ containing $Z$ that does not lift to $C$. It is not clear yet what the dimension of the socle in this degree is.

 For generators, we have exactly one in degree $2d-2$ and $d-2$ in degree $2d-1$. There is none in degree $2d+1$. We have some in degree $2d$ that we have to compute. After a computation with the twists as we did in Theorem \ref{d=4}, we get a minimal free resolution of the form
  \begin{equation} \label{Z-d}
  0 \rightarrow
  \begin{array}{c}
S(-2d-2) \\
  \oplus \\
S(-2d-1)^{d-2} \\
  \oplus \\
  S(-2d)^{1+\epsilon}
  \end{array}
  \rightarrow
  \begin{array}{c}
  S(-2d+2) \\
  \oplus \\
  S(-2d+1)^{d-2} \\
  \oplus \\
S(-2d)^{2+\epsilon}
  \end{array}
  \rightarrow  I_{Z} \rightarrow 0
  \end{equation}
where $\epsilon \geq 0$.
The idea of our approach will rest on the fact that the redundancy in the minimal free resolution (in this case $S(-2d)$) is preserved in the sequence of linked sets of points that we will produce. More precisely,
the strategy will be to study a series of $d-2$ specific links, and obtain a contradiction after the last link (or earlier).

We start with the set $Z$ which is the disjoint union of two complete intersections, $Z_1$ and $Z_2$ of type $(d,d)$. The result of each  link will again be a scheme-theoretic union of two complete intersections of the same type (but we do not claim a priori that they are reduced or disjoint in general).

Setting the notation, let $Z = Y_0 = Y_{0,1} \cup Y_{0,2}$; then for $i = 1, \dots, d-2$, the $i$-th link will send $Y_{i-1} = Y_{i-1,1} \cup Y_{i-1,2}$ to $Y_i = Y_{i,1} \cup Y_{i,2}$, where $Y_{i,1}$ and $Y_{i,2}$ are  complete intersections of the same type.
More precisely, the $i$-th link will start with $Y_{i-1} = Y_{i-1,1} \cup Y_{i-1,2}$  and will consist of a regular sequence $(F_i,G_i)$ where $F_i$ is a form of some degree, say $a_i$, in $I_{Y_{i-1}}$ and $G_i = G_{i,1} G_{i,2}$ is  the product of two forms of the same degree, say $b_i$, one in $I_{Y_{i-1,1}}$ and one in $I_{Y_{i-1,2}}$. To represent such a link numerically, we will use the notation $(a_i, b_i+b_i)$.

We consider such a sequence of links both as linking $Y_{i-1}$ to $Y_i$, and  the same time as representing two ``parallel" sequences of links, where we view  $(F_i,G_{i,1})$ as a link from $Y_{i-1,1}$ to $Y_{i,1}$  and $(F_i,G_{i,2})$ as a link from $Y_{i-1,2}$ to $Y_{i,2}$ separately. Thus we have
\[
Y_{i-1} = Y_{i-1,1} \cup Y_{i-1,2} \ \overset{(a_i, b_i+b_i)}{\scalebox{3.5}[1]{$\sim$}} \ Y_i = Y_{i,1} \cup Y_{i,2}.
\]
It is crucial to note that we start with $Z_1$ and $Z_2$ that are indistinguishable both geometrically and numerically, and at each step the choices of the links $(F_i, G_{i,1}G_{i,2})$  do not ``favor" either $Y_{i,1}$ over $Y_{i,2}$ or vice versa. Thus we have the

\vspace{.1in}

\begin{equation} \label{symm princ}
\parbox{5.7in}  {\noindent \bf {Symmetry Principle} \it{For each choice of $i$, there can be no geometric or numerical property distinguishing $Y_{i,1}$ from $Y_{i,2}$.}
}
\end{equation}

\vspace{.1in}

For the sake of clarity, our argument will proceed as follows:

\begin{itemize}

\item[(i)] describe numerically the sequence of links that we will use,

\item[(ii)] look at the $h$-vectors, assuming such links exist,

\item[(iii)]  look at the resolutions, again assuming that the links exist,

\item[(iv)] justify the existence of the links, and finally

\item[(v)] put it all together for the conclusion.

\end{itemize}

\noindent The goal is to obtain a contradiction. We stress that for  steps (i), (ii) and (iii) we will focus on the numerical information of the desired links and residuals, and will discuss the existence of these links only in  step (iv). Once the existence of these links is established, the fact that the residuals are unions of complete intersections of certain types is a routine calculation obtained by looking at the ``parallel" links separately (e.g.  for the first link, a complete intersection of type $(d,d)$ is linked by a complete intersection of type $(d,2d-2)$ to a complete intersection of type $(d,d-2)$).

\vspace{.1in}

\noindent \underline{Step (i)}:  For each link the following describes the starting set $Y_{i-1} = Y_{i-1,1} \cup Y_{i-1,2}$ as a union of complete intersections, then gives the degrees of the generators of the complete intersection giving the link, and finally describes the residual as a union of complete intersections. There is a total of $d-2$ links.

\begin{center}

\scriptsize{
\begin{tabular}{c|c|c|ccccccccccccc}
link & starting points & link & residual points \\
$\#$ & $Y_{i-1}$ in $\mathbb P^2$  & type  & $Y_i$ in $\mathbb P^2$ \\ \hline
1 & $Y_0 = CI(d,d) \cup CI(d,d)$ & $(d+d, 2d-2)$ & $Y_1 = CI(d,d-2) \cup CI(d,d-2)$ \\
2 & $Y_1 = CI(d,d-2) \cup CI(d,d-2)$ & $((d-2) + (d-2), 2d-3)$ & $Y_2 =  CI(d-3,d-2) \cup CI(d-3,d-2)$ \\
3 & $Y_2 = CI(d-3,d-2) \cup CI(d-3,d-2)$ & $((d-3) + (d-3),2d-6)$ & $Y_3 = CI(d-4,d-3) \cup CI(d-4,d-3)$ \\
4 &  $Y_3 = CI(d-4,d-3) \cup CI(d-4,d-3)$ & $((d-4) + (d-4), 2d-8)$ & $Y_4 = CI(d-5,d-4) \cup CI(d-5,d-4)$   \\
$ \vdots$ & $\vdots$ & $\vdots$ & $\vdots$ \\
$d-2$ & $Y_{d-3} = CI(2,3) \cup CI(2,3)$ & $(2+2,4)$ & $Y_{d-2} = CI(1,2) \cup CI(1,2)$.

\end{tabular}
}
\end{center}
It is certainly true, since we can view the parallel links separately, that the residuals are scheme-theoretic unions of complete intersections. Unfortunately after the first couple of links we do not know that these complete intersections are reduced or disjoint, although we believe this to be true. In any case, the last link results in a zero-dimensional scheme of degree 4, together with the promised redundancy of the minimal free resolutions, and this will allow us to conclude just as in Theorem \ref{d=4}.

\vspace{.1in}

\noindent \underline{Step (ii)}: Next we look at the $h$-vectors. The $h$-vector for the starting set $Z$ is $\Delta h_Z:
 (0, 1, 2, 3,4 , \dots  , 2d-3, 2d-2, 2d-2 , d ,1) $
and  we can compute the sequence of $h$-vectors of the residuals. The following lists the sequence of $h$-vectors of the residuals.
{\scriptsize
\[
\begin{array}{c|cccccccccccccccccccc}
 & \hbox{deg:} \\
 & 0 & 1 & 2 & 3 & 4  & \dots  & 2d-10 & 2d-9 & 2d-8 & 2d-7 & 2d-6 & 2d-5 & 2d-4 & 2d-3 & 2d-2& 2d-1 & 2d \\ \hline
Y_0 & 1 & 2 & 3  & 4 & 5 & \dots & 2d-9 & 2d-8 & 2d-7 & 2d-6 & 2d-5 & 2d-4 & 2d-3 & 2d-2 & 2d-2 & d & 1 \\
Y_1 & 1 & 2 & 3 & 4 & 5 & \dots & 2d-9  & 2d-8 & 2d-7 & 2d-6 & 2d-5 & 2d-4 & 2d-4 & d-2 \\
Y_2 & 1 & 2 & 3 & 4 & 5  & \dots & 2d-9 & 2d-8 & 2d-7 & 2d-6 & d-3 \\
Y_3 & 1 & 2 & 3 & 4 & 5 & \dots & 2d-9 & 2d-8 & d-4 \\
Y_4 & 1 & 2 & 3 & 4 & 5 & \dots & d-5 \\
&&& \vdots \\
Y_{d-3} & 1 & 2 & 3 & 4 & 2 \\
Y_{d-2} & 1 & 2 & 1
\end{array}
\]
}

\vspace{.1in}

\noindent \underline{Step (iii)}: Next we derive the minimal free resolutions, assuming the existence of the links.
The main observation is that in each step, even if all conceivable summands split in the mapping cones,  there remains a redundant term in the free resolution;  if fewer splittings occur, there could only be more redundancy. The desired contradiction at the end is based on the existence of this redundancy.

The first complete intersection in the first  table links $Z = Y_0$ to a residual curve $Y_1 = Y_{1,1} \cup Y_{1,2}$, where $Y_{1,1}$ and $Y_{1,2}$ are complete intersections as described in the top of the fourth column in the table in Step (i). A mapping cone starting with (\ref{Z-d})  (and splitting the summand corresponding to the minimal generator of degree $2d-2$ used in the link)  gives a minimal free resolution
\[
  0 \rightarrow
  \begin{array}{c}
S(-2d+2)^{1+\epsilon} \\
  \oplus \\
S(-2d+1)^{d-2} \\
   \end{array}
  \rightarrow
  \begin{array}{c}
S(-2d+4) \\
  \oplus \\
S(-2d+3)^{d-2} \\
  \oplus \\
S(-2d+2)^{1+\epsilon} \\
   \end{array}
  \rightarrow  I_{Y_1} \rightarrow 0.
\]
(It is less obvious that the generator of degree $2d$ in the complete intersection is a minimal generator of $I_Z$, so we note that $\epsilon$ may have grown by one here, but by slight abuse of notation we continue to write $1+\epsilon $ with $\epsilon \geq 0$. This is the only time we will have this ambiguity.)

The next link, using a complete intersection of type $((d-2) + (d-2),2d-3)$, uses two minimal generators and the residual, $Y_2$, has minimal free resolution
\[
  0 \rightarrow
  \begin{array}{c}
S(-2d+5)^{1+\epsilon} \\
  \oplus \\
S(-2d+4)^{d-3} \\
   \end{array}
  \rightarrow
  \begin{array}{c}
S(-2d+6)^{d-2} \\
  \oplus \\
S(-2d+5)^{1+\epsilon} \\
   \end{array}
  \rightarrow  I_{Y_2} \rightarrow 0.
\]
Notice the redundant $S(-2d+5)$ and the fact that the next link will involve two forms of degree $2d-6$, hence we cannot split off any redundant terms from this resolution in the  next mapping cone. One checks that this redundancy persists in the same way until the end.

\vspace{.1in}

\noindent \underline{Step (iv)}: Notice that in the desired sequence of links described in Step (i) the first two are a bit different from the rest, in that the third and subsequent links are by forms of the same degree (one reducible) whereas the first two are not of this form. Now we justify the existence and important properties of the first two links.

\begin{proposition} \label{link 1}
The first link exists. That is, $Y_0$ is linked by a complete intersection $(G_{0,1}G_{0,2}, F_0)$ of type $(2d, 2d-2)$ to $Y_1$, where $G_{0,1}$ is a general element of $[I_{Y_{0,1}}]_d$ and $G_{0,2}$ is a general element of $[I_{Y_{0,2}}]_d$; both are smooth. The residual, $Y_1 = Y_{1,1} \cup Y_{1,2}$ is reduced. In particular $Y_{1,1}$ and $Y_{1,2}$ have no points in common, and both are complete intersections of type $(d,d-2)$. The unique form $F_0$ of degree $2d-2$ in $I_{Y_0}$ is irreducible.
\end{proposition}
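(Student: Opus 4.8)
\emph{The plan is to construct the complete intersection that cuts out the link, verify it is a genuine regular sequence by first establishing that the canonical form $F_0$ is irreducible, and then identify the residual by working separately on each of the smooth curves $V(G_{0,i})$, as the parallel point of view permits.}

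First I would produce the linking forms. For each $i$ the component $[I_{Y_{0,i}}]_d$ is the two-dimensional pencil spanned by the defining forms of the complete intersection $Y_{0,i}=Z_i$ of type $(d,d)$, and its base locus is exactly $Z_i$. A general member $G_{0,i}$ is smooth away from $Z_i$ by Bertini, and it is smooth at the base points as well: since $Z_i$ is a reduced complete intersection, the Jacobian of any two generators has rank two at each of its points, which forces every nonzero member of the pencil to have nonvanishing gradient there. Thus $G_{0,i}$ is a smooth (hence irreducible) plane curve of degree $d$ through $Z_i$, and the product $G:=G_{0,1}G_{0,2}$ lies in $[I_{Y_0}]_{2d}$. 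Recall from the $h$-vector of Lemma~\ref{hf for fail by one} that $F_0$ is the unique (up to scalar) element of $[I_{Y_0}]_{2d-2}$ and that $[I_{Y_0}]_t=0$ for $t<2d-2$.

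The crux of the argument, and the step I expect to be hardest, is the irreducibility of $F_0$. Suppose instead that $F_0=\prod_j R_j$ has at least two irreducible components. Because $F_0$ is canonically attached to $Z$, and because $Z_1$ and $Z_2$ are general hyperplane sections of the \emph{irreducible} curves $C_1,C_2$ (so each carries the full uniform position, i.e. monodromy, structure), the Symmetry Principle~(\ref{symm princ}) forbids any component $R_j$ from meeting $Z_1$ in a proper nonempty subset, since such a subset would be a distinguished subconfiguration violating uniform position; the same applies to $Z_2$. Hence some component contains all of $Z_1$ and some component contains all of $Z_2$. A component containing $Z_1$ has degree at least $d$, the initial degree of $I_{Z_1}$, and likewise for $Z_2$. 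If the \emph{same} component carried both $Z_1$ and $Z_2$, it would be a form of degree $<2d-2$ vanishing on all of $Z$, contradicting $[I_Z]_{<2d-2}=0$; if two \emph{distinct} components carry $Z_1$ and $Z_2$, then $\deg F_0\ge d+d=2d>2d-2$, again a contradiction. Therefore $F_0$ is irreducible. Making the monodromy input fully rigorous, so as to exclude a component splitting the points of $Z_i$, is the delicate point, exactly as in the reducibility discussion for the case $d=4$.

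With $F_0$ irreducible of degree $2d-2$ and $G=G_{0,1}G_{0,2}$ a product of two degree-$d$ curves, $F_0$ and $G$ share no component, so $(G,F_0)$ is a regular sequence cutting out a complete intersection $X$ of type $(2d,2d-2)$ containing $Y_0$; this yields the link $Y_0\sim Y_1$ with $I_{Y_1}=(G,F_0):I_{Y_0}$, and a degree count gives $\deg Y_1=(2d)(2d-2)-2d^2=2d(d-2)$. To identify $Y_1$ and establish reducedness I would argue on each smooth curve $\Gamma_i:=V(G_{0,i})$ separately. On $\Gamma_i$ the divisor $Z_i$ is cut by a degree-$d$ form, while $F_0$ cuts a divisor in $|\mathcal O_{\Gamma_i}(2d-2)|$ containing $Z_i$; hence the residual $Y_{1,i}$ lies in $|\mathcal O_{\Gamma_i}(d-2)|$, and since $H^0(\mathbb P^2,\mathcal O(d-2))\to H^0(\Gamma_i,\mathcal O_{\Gamma_i}(d-2))$ is surjective it is in fact cut out on $\Gamma_i$ by a form of degree $d-2$. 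Thus each $Y_{1,i}$ is a complete intersection of type $(d,d-2)$, and $Y_1=Y_{1,1}\cup Y_{1,2}$. Finally, generic transversality (Bertini applied to the pencils, whose base loci $Z_i$ are discarded in passing to the residual) shows that $F_0$ meets the general $\Gamma_i$ transversally off $Z_i$, so each $Y_{1,i}$ is reduced; and since $Y_{1,1}\subset\Gamma_1$ and $Y_{1,2}\subset\Gamma_2$, while $\Gamma_1\cap\Gamma_2$ is a fixed finite set avoided by the residuals for general $G_{0,i}$, the two are disjoint. Hence $Y_1$ is reduced, which completes the proposition.
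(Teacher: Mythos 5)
Your overall route is the same as the paper's: smooth pencil members through each $Z_i$, irreducibility of $F_0$ via the monodromy/uniform position of $Z_1$ and $Z_2$ together with a degree count, and identification of the residual by working on each smooth curve $\Gamma_i=V(G_{0,i})$ separately. Two of your steps are in fact slightly cleaner than the paper's: you verify smoothness of the pencil members \emph{at} the base points via the rank-two Jacobian, and you finish the irreducibility argument with an explicit degree count ($d+d>2d-2$, resp.\ $<2d-2$), where the paper only writes out the monodromy loop that forbids a component from splitting $Z_1$ (this is exactly your flagged ``delicate point,'' and the paper does supply it, citing Harris's theorem that the monodromy group of $C_i$ is the full symmetric group). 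Your identification of $Y_{1,i}$ as a complete intersection of type $(d,d-2)$ via $\left|\mathcal O_{\Gamma_i}(d-2)\right|$ and the surjectivity of $H^0(\mathcal O_{\mathbb P^2}(d-2))\to H^0(\mathcal O_{\Gamma_i}(d-2))$ is a correct substitute for the paper's appeal to the standard liaison computation.

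There is, however, one genuine gap: you never show that $F_0$ is \emph{nonsingular at the points of $Y_0$}, and your reducedness argument silently uses this. You prove transversality of $F_0$ and $\Gamma_i$ only ``off $Z_i$,'' and then assert that ``the base loci $Z_i$ are discarded in passing to the residual.'' But the residual divisor on $\Gamma_i$ is $F_0\cdot\Gamma_i - Z_i$, so a point $p\in Z_i$ is discarded only if the local intersection multiplicity of $F_0$ and $\Gamma_i$ at $p$ equals $1$; if $F_0$ were singular at $p$ that multiplicity is automatically $\geq 2$ no matter how general $\Gamma_i$ is, so $Y_{1,i}$ would meet $Z_i$ and could be non-reduced (multiplicity $\geq 3$), and irreducibility of $F_0$ alone does not exclude this (for $d\geq 6$ the bound $\binom{2d-3}{2}\geq d^2$ even permits an irreducible curve of degree $2d-2$ singular at every point of $Z_1$). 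The paper closes this with a short argument you should add: by uniform position the points of each $Z_i$ are interchangeable, and by the Symmetry Principle $Z_1$ and $Z_2$ behave alike, so either $F_0$ is smooth at all $2d^2$ points of $Y_0$ or singular at all of them; in the latter case B\'ezout applied to $F_0$ and $G_0=G_{0,1}G_{0,2}$ (which share no component) gives $2\cdot 2d^2 = 4d^2 \leq (2d-2)(2d) = 4d^2-4d$, a contradiction. With $F_0$ smooth at each $p\in Z_i$ and the tangent directions of the pencil members varying at $p$, the general $\Gamma_i$ meets $F_0$ transversally at $Z_i$ as well, and your conclusion then goes through.
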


\begin{proof}

We have that $Z = Y_0 = Y_{0,1} \cup Y_{0,2}$ is the general hyperplane section of the curve $C = C_1 \cup C_2$ in $\mathbb P^3$, and each of the latter two curves is a smooth complete intersection curve in $\mathbb P^3$ of type $(d,d)$. Hence both $Y_{0,1} $ and $Y_{0,2} $ are reduced complete intersection sets of points with UPP. Furthermore, there are  pencils $\mathcal P_{0,1}$ and $\mathcal P_{0,2}$ of forms of degree $d$ defining $Y_{0,1}$ and $Y_{0,2} $, respectively, so in particular  the base locus of each of these pencils is the corresponding complete intersection set of points. Hence  a general element in $\mathcal P_{0,1}$ and a general element in $\mathcal P_{0,2}$ will each be smooth.

We know from the $h$-vector that there is a unique form $F_0$ of degree $2d-2$ containing $Y_0$. If this form were not reduced then removing a factor would yield a curve of lower degree containing all of the points, which is ruled out by the $h$-vector. Hence $F_0$ is reduced.

We now claim that $F_0$ is smooth at the points of $Y_0$. By the uniform position principle (see \cite{EH}, page 85), the points of $Y_0$ are indistinguishable, so the alternative is that $F_0$ must be singular at each of the $d^2$ points of $Y_{0,1}$ and each of the $d^2$ points of $Y_{0,2}$. Let $G_{0,1}$ be a general element of $\mathcal P_{0,1}$ and let $G_{0,2}$ be a general element of $\mathcal P_{0,2}$. Since $\mathcal P_{0,1}$ and $\mathcal P_{0,2}$ each define a finite set of points, the product $G_0 = G_{0,1} G_{0,2}$ does not have any component in common with $F_0$. But then the complete intersection of $F_0$ and $G_0$ has degree at least $2 \cdot 2d^2 = 4d^2$, while the product of degrees is only $(2d-2)(2d) < 4d^2$. This eliminates the possibility that $F_0$ is singular at each of the points, and we are done with the claim.

It follows from what we have said that the first link exists. Furthermore, by choosing a general element of each pencil, $G_0$ will meet $F_0$ transversally at every point of intersection. Consequently for the linked set we have

\begin{center}
{\it $Y_{1,1}$ and $Y_{1,2}$ are reduced and disjoint from each other. }
\end{center}

\noindent The fact that $Y_{1,1}$ is a complete intersection of type $(d,d-2)$ follows immediately from the fact that $Y_{0,1}$ is a complete intersection of type $(d,d)$ and it is linked to $Y_{1,1}$ by a complete intersection of type $(d,2d-2)$; similarly $Y_{1,2}$ is a complete intersection of type $(d,d-2)$.

\vspace{.1in}

\noindent \underline{Claim}: {\it The unique form $F_0$ of degree $2d-2$ in $I_{Y_0}$ is irreducible}.

\vspace{.1in}

Suppose $F_0 = H_1\cdots H_r$ are the irreducible components of $G$. The $d^2$ points of $Y_{0,1}$ have UPP (since $C_1$ is a smooth complete intersection curve) and the same is true for $Y_{0,2}$.   Let $W_1$ be the subset of $Y_{0,1}$ lying on $H_1$ and let $W_2$ be the subset of $Y_{0,2}$ lying on $H_1$. Let $P \in W_1$ and choose $Q \in Y_{0,1}$ such that $Q \notin W_1$. In \cite{EH},  Harris showed that the monodromy groups for $C_1$ and for $C_2$ are both the full symmetric group, so there is some loop $\gamma$ in a suitable open set of $(\mathbb P^3)^*$ so that moving the hyperplane along $\gamma$ interchanges $P$ and $Q$ but leaves all other points of $Y_{0,1}$ fixed. We can further arrange it so that $\gamma$ does not move any points of $Y_{0,2}$ (it is enough that $\gamma$ avoid planes that are tangent to $C_2$).
But $W_1 \cup W_2$ determines $G_1$, so by monodromy $(W_1 \backslash \{P\}) \cup \{Q \} \cup W_2$ determines $H_1$. (If it did not, $F_0$ would not be unique.) This is a contradiction since $Q$ does not lie on $G_1$. This proves the Claim.
\end{proof}

\begin{proposition} \label{link 2}
The second link exists. That is, $Y_1$ is linked by a complete intersection $(G_{1,1}G_{1,2},F_1)$ of type $(2d-4, 2d-3)$ to $Y_2$, where $G_{1,1} \in [I_{Y_{1,1}}]_{d-2}$ and $G_{1,2} \in [I_{Y_{1,2}}]_{d-2}$. The residual, $Y_2 = Y_{2,1} \cup Y_{2,2}$ is reduced, and in particular $Y_{2,1}$ and  $Y_{2,2}$ have no points in common. Note that $G_{1,1} G_{1,2}$ is the unique element of $I_{Y_1}$ of degree $2d-4$.
\end{proposition}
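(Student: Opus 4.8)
The plan is to follow the proof of Proposition~\ref{link 1}, but with the two members of the linking complete intersection exchanging roles: the reducible form $G_{1,1}G_{1,2}$ is now the \emph{rigid} member (it is forced to be the unique form of degree $2d-4$), while the \emph{movable} member is a general $F_1\in[I_{Y_1}]_{2d-3}$. From the minimal free resolution of $I_{Y_1}$ produced in Step~(iii) one reads off $\dim[I_{Y_1}]_{2d-4}=1$ and $\dim[I_{Y_1}]_{2d-3}=d+1$. Since each $Y_{1,i}$ is a complete intersection of type $(d,d-2)$ with $d-2<d$, its ideal has a single generator $G_{1,i}$ in its initial degree $d-2$, so $\dim[I_{Y_{1,i}}]_{d-2}=1$ and $G_{1,1}G_{1,2}$ is a nonzero element of the one-dimensional space $[I_{Y_1}]_{2d-4}$; hence it spans, giving the final assertion, and the multiples $G_{1,1}G_{1,2}\cdot[S]_1$ form a $3$-dimensional subspace of $[I_{Y_1}]_{2d-3}$. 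I would also record two uniformity inputs. As in the $d=5$ case of Corollary~\ref{d=5}, the sets $Y_{1,1},Y_{1,2}$, being residuals of $Y_{0,1},Y_{0,2}$ on the irreducible curve $F_0$ of Proposition~\ref{link 1}, inherit the uniform position property; together with $d\ge 6$ this forces each $G_{1,i}$ to be irreducible, since a factorization of degree type $(\alpha,d-2-\alpha)$ would place $\alpha d$ points of $Y_{1,i}$ on a curve of degree $\alpha<d-2$, and uniform position then forces that curve to contain all of $Y_{1,i}$, contradicting its initial degree $d-2$. Second, because the two parallel first links are built from \emph{independent} general choices $G_{0,1}\in\mathcal P_{0,1}$ and $G_{0,2}\in\mathcal P_{0,2}$, the curve $G_{1,1}$ (determined by $Y_{1,1}$) can be kept off the disjoint finite set $Y_{1,2}$, and symmetrically, so $G_{1,1}\cap Y_{1,2}=\emptyset=G_{1,2}\cap Y_{1,1}$.

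With these in hand the regular sequence is immediate. If a general $F_1$ shared a component with $G_{1,1}G_{1,2}$, then by irreducibility it would be divisible by $G_{1,1}$ (or $G_{1,2}$). But $G_{1,1}\mid F_1$ together with $F_1\in I_{Y_{1,2}}$ and $G_{1,1}\cap Y_{1,2}=\emptyset$ forces $F_1/G_{1,1}\in[I_{Y_{1,2}}]_{d-1}=G_{1,2}\cdot[S]_1$, i.e.\ $F_1\in G_{1,1}G_{1,2}\cdot[S]_1$, a $3$-dimensional space; this is impossible since $\dim[I_{Y_1}]_{2d-3}=d+1>3$. The Symmetry Principle~(\ref{symm princ}) handles $G_{1,2}$ identically. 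Hence a general $F_1$ shares no component with $G_{1,1}G_{1,2}$, and the link of type $(2d-4,2d-3)$ exists.

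For reducedness and disjointness, note that $G_{1,1}G_{1,2}$ is reduced (a nonreduced factor would yield a lower-degree curve through $Y_1$, against its initial degree), so its singular locus is the finite set $\mathrm{Sing}(G_{1,1})\cup\mathrm{Sing}(G_{1,2})\cup(G_{1,1}\cap G_{1,2})$, which by the previous paragraph is disjoint from $Y_1$. A general $F_1$ avoids these finitely many points, as they lie off the base locus $Y_1$, and by Bertini (Remark~\ref{bertini}) it meets $G_{1,1}G_{1,2}$ transversally at its smooth points away from $Y_1$; hence the residual $Y_2$ is reduced. Since $Y_{2,i}$ is the residual of $Y_{1,i}$ on $G_{1,i}$, it lies on $V(G_{1,i})$, so $Y_{2,1}\cap Y_{2,2}\subseteq G_{1,1}\cap G_{1,2}$, a set $Y_2$ has just been shown to avoid; thus $Y_{2,1}$ and $Y_{2,2}$ are disjoint.

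The hard part is that, unlike in Proposition~\ref{link 1} where the reducible member $G_{0,1}G_{0,2}$ was a general element of base-point-free pencils on $F_0$ to which Bertini applied directly, here the reducible member $G_{1,1}G_{1,2}$ is rigid. The essential content is therefore concentrated in two places: guaranteeing that the general movable form $F_1$ does not absorb a component of the fixed reducible curve, which is exactly where the resolution-based inequality $d+1>3$ and the irreducibility of the $G_{1,i}$ coming from uniform position are used; and verifying the genericity that keeps $G_{1,1}\cap G_{1,2}$ (and the self-singularities of each $G_{1,i}$) off $Y_1$, equivalently that keeps $G_{1,i}$ off the opposite part's points. This last point is the analogue of---but more delicate than---the smoothness verification carried out in Proposition~\ref{link 1}, and it is where I expect the main technical effort to lie.
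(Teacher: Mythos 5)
Your argument takes a genuinely different route from the paper's, and it rests on two inputs that the paper deliberately avoids. First, you assume that $Y_{1,1}$ and $Y_{1,2}$ have the uniform position property and deduce from it that $G_{1,1}$ and $G_{1,2}$ are irreducible; everything downstream (the divisibility analysis of a general $F_1$, the identification of the bad locus with $G_{1,1}G_{1,2}\cdot[S]_1$, the reducedness discussion) depends on this. But the paper's proof of this proposition explicitly allows for the opposite: it says ``$G_{1,1}$ and $G_{1,2}$ are not necessarily irreducible, but they have to act the same way, by the Symmetry Principle,'' and then handles a possibly reducible common factor head-on. The reason for this caution is that $Y_{1,i}$ is the residual of $Y_{0,i}$ in a general member of a \emph{pencil} on $F_0$, not a general hyperplane section of an irreducible curve, so Harris's theorem does not directly give UPP; the paper asserts UPP for such residuals only in the worked $d=5$ case and avoids needing it here. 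Second, even granting UPP, your mechanism for irreducibility is incorrect as stated: a factor of degree $\alpha$ of $G_{1,i}$ carries $\alpha d$ points of $Y_{1,i}$, while the UPP/Davis bound for the number of points of a set with the $h$-vector of a $CI(d-2,d)$ that can lie on a degree-$\alpha$ curve without the curve containing the whole set is $\sum_j\min(h_j,\alpha)=\alpha(2d-2-\alpha)$, and $\alpha d\le\alpha(2d-2-\alpha)$ precisely when $\alpha\le d-2$. So for every relevant $\alpha$ the count does \emph{not} force containment, and the claimed contradiction with the initial degree does not arise by this route. (A contradiction with UPP can still be extracted from the fact that \emph{some} $\alpha d$-subset of $Y_{1,i}$ fails to lie on a degree-$\alpha$ curve, but that is a different argument and still presupposes UPP.)

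For comparison, the paper's proof assumes the link fails, i.e.\ that all minimal generators of $I_{Y_1}$ of degree $\le 2d-3$ share a common factor $H$. Using only the Symmetry Principle it shows $H=P_1P_2$ with $P_i\mid G_{1,i}$ and $\deg P_1=\deg P_2=\ell$, so that exactly $2\ell d$ points of $Y_1$ lie on $H$; it then computes, from the exact sequence $0\to S/(I_{Y_1}:H)(-2\ell)\to S/I_{Y_1}\to S/(I_{Y_1},H)\to 0$ and the known $h$-vector of $Y_1$, that the number of points of $Y_1$ on $H$ equals $2\ell(2d-2\ell-1)+\binom{2\ell}{2}$, forcing $d=(2d-2\ell-1)+\frac{2\ell-1}{2}$, which is not an integer. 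This parity contradiction needs neither UPP for $Y_{1,i}$ nor irreducibility of $G_{1,i}$. Your observation that $G_{1,1}G_{1,2}$ spans the one-dimensional space $[I_{Y_1}]_{2d-4}$ and that $\dim[I_{Y_1}]_{2d-3}=d+1$ is correct and matches the paper, but the core existence argument as you have written it has a genuine gap.
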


\begin{proof}
Now $Y_1 = Y_{1,1} \cup Y_{1,2}$ is  the residual in the first link; note again that without loss of generality,  $Y_{1,1}$ and $Y_{1,2}$ are reduced and disjoint. We have seen that $Y_{0,1}$ is linked in a complete intersection of type $(d,2d-2)$ to the complete intersection $Y_{1,1}$ of type $(d,d-2)$, and similarly  $Y_{0,2}$ is linked to $Y_{1,2}$ of the same type. But above we computed the minimal free resolution of $Y_1$ and saw that there is a unique curve of degree $2d-4$ containing $Y_1$. Hence this curve is the union of the unique curve $G_{1,1}$ of degree $d-2$ containing $Y_{1,1}$, and the unique curve $G_{1,2}$ of degree $d-2$ containing $Y_{1,2}$;  both $G_{1,1}$ and $G_{1,2}$ are reduced since $Y_1$ is.

From the resolution  we know not only that $I_{Y_1}$ has exactly one minimal generator of degree $2d-4$ (namely $G_{1,1} G_{1,2}$), but also  exactly $d-2$ minimal generators of degree $2d-3$ and at least one minimal generator of degree $2d-2$. For the second link, we want to show that there is a regular sequence of type $(2d-4,2d-3)$ in $I_{Y_1}$. The alternative is that all the minimal generators of $I_{Y_1}$ of degree $\leq 2d-3$ have a common factor. For the sake of contradiction, suppose this were the case. Let $H$ be the common factor. Notice that $1 \leq \deg(H) < d-2$ since $H$ would have to be a factor of $G_{1,1}$ (and also of  $G_{1,2}$).
By slight abuse of notation, we will use $H$ to denote both the form and the corresponding curve.

$G_{1,1}$ and $G_{1,2}$ are not necessarily irreducible, but they have to act the same way, by the Symmetry Principle. By assumption, all forms of degree $\leq 2d-3$ in $I_{Y_1}$ have $H$ as a factor so in particular the product $G_{1,1} G_{1,2}$ has $H$ as a  factor. If any polynomial divides both  $G_{1,1}$ and $G_{1,2}$ then in fact we can divide $G_{1,1} G_{1,2}$ by this polynomial and get a polynomial of degree $< 2d-4$ containing $Y_1$, contradicting the Hilbert function of $Y_1$. Thus $H$ must be reducible, with one (not necessarily irreducible) factor dividing $G_{1,1}$ and the  other dividing $G_{1,2}$. By symmetry these factors must have the same degree, so $\deg( H)$ must be even, say $\deg(H) = 2 \ell$. Then there are reduced forms $P_1, P_2, M_1, M_2$  such that

\begin{itemize}

\item $\deg(P_1) = \deg(P_2) = \ell$,

\item $H = P_1 P_2$,

\item $G_{1,1} = M_1 P_1$,

\item  $G_{1,2} = M_2 P_2$,

\item  $G_{1,1}, G_{1,2}$ have no common factor, and

\item all forms in $I_{Y_1}$ of degree $\leq 2d-3$ have $P_1 P_2$ as a factor.

\end{itemize}

\noindent Now, $Y_{1,1}$ is the reduced complete intersection of $M_1 P_1$ and a form of degree $d$, so it contains exactly $d \ell$ points of $H$. Similarly, $Y_{1,2}$ contains $d \ell$ points of $H$. Then all together, we conclude for the common factor $H$ that
\begin{equation} \label{kd pts}
\hbox{{\it $H$ contains $2 \ell d $ points of $Y_1$, an even number.}}
\end{equation}

Using methods introduced by Davis \cite{davis}, consider the exact sequence
\begin{equation} \label{ses}
0 \rightarrow S/(I_{Y_1} : H) (-2\ell) \stackrel{\times H}{\longrightarrow} S/I_{Y_1} \rightarrow S/(I_{Y_1},H) \rightarrow 0.
\end{equation}
The ideal $I_{Y_1} : H$ is the  saturated ideal of the set of points of $Y_1$ not on the curve defined by $H$. The saturation of the ideal $(I_{Y_1,}H)$ defines the set of points of $Y_1$ on $H$. Let us denote by $\bar A$ the points of $Y_1$ on $H$ and by $A'$ the points not on $H$, so $Y_1 = \bar A \cup A'$.

Notice that
\begin{equation} \label{describe A}
[(I_{Y_1},H)]_t = [(H)]_t
\end{equation}
for all $t \leq 2d-3$. Thus from (\ref{ses}) we also get  for the $h$-vectors
\[
\Delta h_{S/I_{A'}}(t-2\ell) = \Delta h_{S/I_{Y_1}}(t) - 2\ell
\]
for all $2 \ell \leq t \leq 2d-3$.

We know that the $h$-vector of $S/I_{Y_1}$ is

{\scriptsize
\begin{equation} \label{hvtrY}
\begin{array}{c|cccccccccccccccccccc}
\hbox{deg:} & 0 & 1 & 2 & 3 & 4  & \dots  & 2d-10 & 2d-9 & 2d-8 & 2d-7 & 2d-6 & 2d-5 & 2d-4 & 2d-3 & 2d-2  \\ \hline
& 1 & 2 & 3 & 4 & 5 & \dots & 2d-9  & 2d-8 & 2d-7 & 2d-6 & 2d-5 & 2d-4 & 2d-4 & d-2 & 0\\
\end{array}
\end{equation}  }
 In particular, it is zero in degree $2d-2$.
From (\ref{ses}) we thus get that $\Delta h_{S/I_{A'}}(2d-2-2\ell) = 0$.
These facts imply that $h^1(\mathcal I_{Y_1}(2d-3)) = 0$ and $h^1(\mathcal I_{A'}(2d-3-2\ell)) = 0$.
Now from the exact sequence
\[
0 \rightarrow (I_{Y_1} : H)(-2\ell) \stackrel{\times H}{\longrightarrow} I_{Y_1} \rightarrow (I_{Y_1},H) \rightarrow 0,
\]
sheafifying and taking cohomology in degree $2d-3$, we get that $(I_{Y_1},H)$ is saturated in degrees $\geq 2d-3$, i.e. it agrees with $I_{\bar A}$ in  degrees $\geq 2d-3$. By (\ref{describe A}), we see that in degree $2d-3$ we have $(I_{Y_1},H) = (H)$, which we now know agrees with $I_{\bar A}$ in that degree. Then we can complete (\ref{describe A}) as follows:
\[
[(I_{Y_1},H)]_t = [(H)]_t = [(I_{Y_1},H)^{sat}]_t =  [I_{\bar A}]_t
\]
for all $t \leq 2d-3$.

It follows that $S/I_{\bar A}$ has $h$-vector
{\scriptsize
\[
\begin{array}{c|cccccccccccccccccccc}
\hbox{deg:} & 0 & 1 & 2 & 3 &  \dots & 2\ell-2 &  2\ell-1 & 2\ell & 2\ell+1 & \dots   & 2d-6 & 2d-5 & 2d-4 & 2d-3 & 2d-2   \\ \hline
& 1 & 2 & 3 & 4 & \dots & 2\ell-1 &  2\ell &2\ell &2\ell & \dots & 2\ell & 2\ell & 2\ell &2\ell & 0 \\
\end{array}
\]
 }

\noindent In particular, $Y_1$ has exactly
\[
2\ell ( 2d-2\ell-1) + \binom{2\ell}{2} = 2\ell \left [  (2d-2\ell-1) + \frac{2\ell-1}{2} \right ]
\]
points on $H$. But in (\ref{kd pts}) we computed that there are $2\ell d$ points of $Y_1$ on $H$, so
\[
d = (2d-2\ell-1) + \frac{2\ell-1}{2} .
\]
But $2\ell$ is even, so the righthand side is not even an integer, and we have a contradiction.
Hence there is no common factor, and the second link also exists: $Y_1$ is linked to $Y_2$ by a complete intersection of type $(2d-4, 2d-3)$.
It is clear from the uniqueness of the  form of initial degree $2d-4$ that both forms participating in this link are minimal generators of $I_{Y_1}$.
\end{proof}

\vspace{.1in}

The first two links were special (numerically), but now all the rest of the links follow the same pattern.
For the $i$-th links, $i \geq 3$, we want to link $Y_{i-1} = Y_{i-1,1} \cup Y_{i-1,2}$ $(i \geq 3)$ to $Y_i = Y_{i,1} \cup Y_{i,2}$ using a complete intersection of type $((d-i) + (d-i), 2d-2i)$. The ideal of $Y_{i-1}$ has $d-i+1$ minimal generators in degree $2d-2i$ (the initial degree), so certainly if the regular sequence exists, the link is done with two minimal generators (and so the residual has one fewer minimal generator).  One of these minimal generators is the union of a curve $N_1$ of degree $d-i$ containing $Y_{i-1,1}$ and a curve $N_2$ of degree $d-i$ containing $Y_{i-1,2}$.

Suppose first that all the desired links from Step (i) exist. We end with a scheme $Y_{d-2}$ of degree 4, whose minimal free resolution has a redundant term. This can only be a copy of $S(-3)$ in both free modules, i.e. the minimal free resolution of $Y_{d-2}$ is
\[
0 \rightarrow
\begin{array}{c}
S(-3) \\
\oplus \\
S(-4)
\end{array}
\rightarrow
\begin{array}{c}
S(-2)^2 \\
\oplus \\
S(-3)
\end{array}
\rightarrow I_{Y_{d-2}} \rightarrow 0.
\]
Even if all the links exist, we do not claim that the residuals continue to be reduced, even though we verified this in the first two links (Propositions \ref{link 1} and \ref{link 2}). In particular, $Y_{d-2}$ may be non-reduced. However, it must have degree 4, and it must contain a scheme $Y_{d-2,1}$ of degree 2 and a scheme $Y_{d-2,2}$ of degree 2, obtained via the parallel links.

\vspace{.1in}

\noindent { \underline{Claim}:} {\it $Y_{d-2}$ must have a subscheme of degree 3 lying on a line, which gives a contradiction}.

Indeed, we have seen that $I_{Y_{d-2}}$ has two minimal generators of degree 2 and one of degree 3, and that it has degree 4. If there were a regular sequence of two forms of degree 2, $Y_{d-2}$ would be a complete intersection, contradicting what we know to be the minimal free resolution. Thus the forms of degree 2 have a degree 1 common divisor. By Davis's theorem $Y_{d-2}$ has a subscheme of degree 3 lying on this line. But then either $Y_{d-2,1}$ or $Y_{d-2,2}$, but not both, must lie on this line. This violates the Symmetry Principle, giving the desired contradiction. This would conclude not only the proof of the Claim, but in fact the proof that $R/I$ has the WLP (always assuming Conjecture \ref{force exactly one}).

\vspace{.1in}

The last issue is to deal with the existence of the remaining links. For any $i$, with $3 \leq i \leq d-2$, if the $i$-th link does not exist then certainly there is a common factor in the initial degree of the ideal of $Y_{i-1}$.

\begin{conjecture} \label{get contra}
For any $i$, $3 \leq i \leq d-2$, if the $i$-th link does not exist, i.e., there is a common factor in the initial degree of the ideal of $Y_{i-1}$,   then this common factor defines  a curve that contains
enough points of $Y_{i-1}$ so that one obtains a contradiction using the Symmetry Principle in the same way as was done in the proof of Proposition \ref{link 2}.
\end{conjecture}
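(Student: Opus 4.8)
The plan is to argue by contradiction in exact parallel with Proposition \ref{link 2}. Fix $i$ with $3 \le i \le d-2$ and suppose the $i$-th link fails, so that every form of the initial degree $2d-2i$ in $I_{Y_{i-1}}$ carries a common factor $H$. Write $Y_{i-1} = Y_{i-1,1} \cup Y_{i-1,2}$, where $Y_{i-1,j}$ is a complete intersection of type $(d-i,d-i+1)$ with distinguished minimal generators $N_j$ (of degree $d-i$) and $M_j$ (of degree $d-i+1$). Since $N_1 N_2 \in [I_{Y_{i-1}}]_{2d-2i}$, we have $H \mid N_1 N_2$. The first task is to establish that $N_1$ and $N_2$ are coprime; granting this, $H$ splits as $H = P_1 P_2$ with $P_1 \mid N_1$ and $P_2 \mid N_2$, and the Symmetry Principle (\ref{symm princ}) forces $\deg P_1 = \deg P_2 =: \ell$, so that $\deg H = 2\ell$ with $1 \le \ell \le d-i$.

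The second task is the point count, the analogue of (\ref{kd pts}). Because $P_1 \mid N_1$, the scheme $Y_{i-1,1} \cap V(P_1)$ is the complete intersection of $P_1$ and $M_1$, of degree $\ell(d-i+1)$, and symmetrically for $Y_{i-1,2} \cap V(P_2)$. Since the $h$-vector forces $\deg Y_{i-1} = 2(d-i)(d-i+1)$, which is the sum of the degrees of the two components, $Y_{i-1,1}$ and $Y_{i-1,2}$ are scheme-theoretically disjoint; granting in addition that $Y_{i-1,1}$ avoids $V(P_2)$ and $Y_{i-1,2}$ avoids $V(P_1)$, one obtains $\deg \bar A = 2\ell(d-i+1)$ for the subscheme $\bar A$ of $Y_{i-1}$ lying on $H$.

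I would then run the Davis exact sequence (\ref{ses}) with this $H$, reproducing the cohomological bookkeeping of Proposition \ref{link 2}. The known $h$-vector of $Y_{i-1}$ vanishes beyond degree $2d-2i$, so the requisite $h^1$-vanishings for $Y_{i-1}$ and for the residual $A'$ off $H$ follow from their $h$-vectors exactly as there; moreover the common factor gives $[(I_{Y_{i-1}},H)]_t = [(H)]_t$ for all $t \le 2d-2i$. The saturation argument then yields $[I_{\bar A}]_{2d-2i} = [(H)]_{2d-2i}$, whence
\[
h_{\bar A}(2d-2i) = \binom{2d-2i+2}{2} - \binom{2d-2i-2\ell+2}{2} = \ell(4d-4i-2\ell+3).
\]
Since $\ell \le d-i$, this value exceeds $\deg \bar A = 2\ell(d-i+1)$ by $\ell(2d-2i-2\ell+1) \ge \ell > 0$, which is absurd: the Hilbert function of the zero-dimensional scheme $\bar A$ can never exceed its length. (Equivalently, equating the two counts forces $\ell = d-i+\tfrac12$, not an integer, exactly as in Proposition \ref{link 2}.) This rules out the common factor and establishes the $i$-th link.

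The hard part is the geometric input underlying the first two tasks. After the first two links (Propositions \ref{link 1} and \ref{link 2}) we can no longer assume that $Y_{i-1}$, or its components $Y_{i-1,j}$, are reduced, so the tools used there — uniform position and Harris's monodromy — are unavailable. The point count $\deg \bar A = 2\ell(d-i+1)$ should survive scheme-theoretically once one knows the two components are disjoint (which the $h$-vector guarantees) and that $Y_{i-1,1}$ avoids the factor $P_2$ and conversely; the genuinely delicate step is the coprimality of $N_1$ and $N_2$ — that the two distinguished degree-$(d-i)$ generators share no common factor — in a manner compatible with symmetry. It is precisely this structural control, which came for free from reducedness in the base cases, that we have not been able to establish uniformly in $i$.
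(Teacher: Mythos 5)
First, note that the paper contains no proof of this statement: Conjecture \ref{get contra} is one of the two points the authors explicitly leave open, so there is nothing to compare your argument against except the template of Proposition \ref{link 2} that the conjecture itself invokes. Your proposal is a faithful and essentially correct adaptation of that template. The numerical skeleton checks out: $Y_{i-1,j}$ is of type $(d-i,d-i+1)$, the initial degree of $I_{Y_{i-1}}$ is $2d-2i$ with $h$-vector value $d-i$ there and $0$ beyond, the Davis sequence (\ref{ses}) together with the $h^1$-vanishings gives $[I_{\bar A}]_{2d-2i}=[(H)]_{2d-2i}$, and the resulting count $\ell(4d-4i-2\ell+3)$ does exceed $2\ell(d-i+1)$ by $\ell(2d-2i-2\ell+1)>0$ for $1\le\ell\le d-i$ (equivalently, equality would force $\ell=d-i+\tfrac12$). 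This is exactly the arithmetic contradiction the authors have in mind, and your strict-inequality formulation is if anything cleaner than the parity argument in Proposition \ref{link 2}.

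However, the proposal is not a proof, and you have correctly diagnosed where it fails — which is precisely why the statement remains a conjecture. Everything downstream of the point count is conditional on geometric inputs that Proposition \ref{link 2} extracted from reducedness, UPP and monodromy for $Y_0$ and $Y_1$, none of which is available for $Y_{i-1}$ with $i\ge 3$: (a) that $N_1$ and $N_2$ share no common factor, so that $H$ splits as $P_1P_2$ with $P_j\mid N_j$; (b) that the Symmetry Principle (\ref{symm princ}) can legitimately be applied to force $\deg P_1=\deg P_2$ when the components may be non-reduced and the "factors" are only defined up to scheme structure; and (c) that the scheme-theoretic length of $Y_{i-1}\cap V(H)$ really is $2\ell(d-i+1)$, which requires $Y_{i-1,1}$ to avoid $V(P_2)$ and vice versa — a genuine issue once disjointness and reducedness are no longer guaranteed after the second link. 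Note also that even the starting data (the $h$-vector of $Y_{i-1}$ and the claim that $Y_{i-1,j}$ is a complete intersection of the stated type) depends on the earlier links existing and behaving as in Step (i), which is itself only established for $i\le 2$. So what you have produced is a correct reduction of Conjecture \ref{get contra} to the missing structural control on the non-reduced residuals, not a resolution of it; the authors' own position is the same.
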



\section{Some additional results and consequences}

\subsection{Jacobian ideals} \label{jacobian ideals}

We now give a consequence of our results to Jacobian ideals. We recall that if $X: f=0$ is a smooth hypersurface of degree $d+1$ in $\mathbb P^n$ defined by a homogeneous polynomial $f$ then the Jacobian ideal $J(X)$ (or $J(f)$) is the homogeneous ideal generated by the  partial derivatives $\frac{\partial f}{\partial x_i}$, $0 \leq i \leq n$, and it is a complete intersection generated by forms of degree $d$ since $X$ is smooth.

In \cite{ilardi} G. Ilardi posed the following question:

\begin{quotation}
{\it Does the Jacobian ideal of a smooth hypersurface have the Weak Lefschetz Property?}
\end{quotation}

Ilardi proves the following partial answer. We will slightly change her notation to agree with ours. For a graded algebra $A$, {\it having WLP in degree $t$} will mean that the multiplication map $\times L : [A]_{t} \rightarrow [A]_{t+1}$ has maximal rank.

\begin{proposition}[Ilardi \cite{ilardi}]
Let $X : f = 0$ be a hypersurface in $\mathbb P^n$ of degree $d+1 > 2$, such that its singular locus $X_s$ has dimension at most $n -3$. Then the ideal $J(X)$ has the WLP in degree $d -1$.
\end{proposition}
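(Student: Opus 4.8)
The plan is to reduce the statement to the linear independence of the restricted partials and then to a classical fact about polar hypersurfaces. Write $A = R/J(f)$ with $R = k[x_0,\dots,x_n]$, and let $P = \langle \partial_0 f,\dots,\partial_n f\rangle \subseteq [R]_d$ be the space of partials, so that $J(f)$ is generated in degree $d$ and $[A]_{d-1} = [R]_{d-1}$. A dimension count from the Euler relation shows $\dim_k[A]_d \ge \dim_k [A]_{d-1}$ in the relevant range, so ``WLP in degree $d-1$'' is exactly injectivity of $\times L\colon [A]_{d-1}\to[A]_d$. By upper semicontinuity of the rank it then suffices to produce a single $L$ for which this map is injective, i.e. for which $L\cdot[R]_{d-1}\cap P = 0$ inside $[R]_d$.

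First I would translate failure of injectivity into geometry. A nonzero $g\in[R]_{d-1}$ with $Lg\in P$ means $Lg = \sum_i c_i\,\partial_i f = D_v f$ for $v=(c_0,\dots,c_n)\ne 0$; that is, the polar form $D_v f$ is divisible by $L$, equivalently the polar hypersurface $V(D_v f)$ contains the hyperplane $V(L)$. Setting up the incidence $\Sigma = \{(v,L) : V(L)\subseteq V(D_v f)\}$ and projecting to the $v$-factor, the fibers are finite (a degree-$d$ form has at most $d$ linear factors), so $\dim\Sigma\le\dim\mathbb{P}(P)$. If the partials are dependent (i.e. $f$ is a cone) then $\dim\mathbb{P}(P)\le n-1$ and the image of $\Sigma$ in $(\mathbb{P}^n)^*$ is already a proper subvariety, producing a good $L$. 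The essential case is therefore that the $n+1$ partials are independent, where the reduction becomes: \emph{show that the general polar $D_v f$ has no linear factor}, equivalently that the general polar hypersurface $V(D_v f)$ has no hyperplane component.

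For the crux I would prove the stronger statement that the \emph{general polar is smooth}. Since the singular locus of $V(D_v f)$ is cut out by $\operatorname{Hess}(f)\cdot v$, consider the incidence $I = \{(v,p)\in\mathbb{P}^n\times\mathbb{P}^n : \operatorname{Hess}(f)(p)\,v = 0\}$. Projecting to the point $p$, the fiber is $\mathbb{P}(\ker\operatorname{Hess}(f)(p))$, which is empty off the Hessian hypersurface; stratifying by the corank $c$ of the symmetric matrix $\operatorname{Hess}(f)(p)$ and using that the corank-$c$ stratum has the expected codimension $\binom{c+1}{2}$, one checks each stratum contributes dimension at most $n-1$ to $I$, whence $\dim I\le n-1$. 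Projecting instead to $v$, the image is the locus of $v$ for which $V(D_v f)$ is singular, so it has dimension $\le n-1 < n$: the general polar is smooth, hence irreducible, hence has no hyperplane component. This contradicts the bad case and finishes the argument. Here the hypothesis $\dim X_s\le n-3$ does the work: it forces $f$ not to be a cone, so $\det\operatorname{Hess} f\not\equiv 0$ (invoking Gordan--Noether in low dimension), and it bounds the higher-corank loci of the Hessian so that no stratum of $I$ acquires excess dimension.

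\textbf{The main obstacle} is precisely this last point: justifying the ``expected codimension'' input, i.e. controlling the dimension of the corank strata of $\operatorname{Hess}(f)$ in terms of $\dim X_s$ (equivalently, ruling out that the rational polar map $\Gamma\colon\mathbb{P}^n\to(\mathbb{P}^n)^*$, $p\mapsto[\nabla f(p)]$, degenerates on every hyperplane). Two degenerate sub-cases must be excluded by hand: a \emph{common} linear factor of all polars would give $V(L)\subseteq X_s$, contradicting $\codim X_s\ge 3$ at once; and a \emph{moving} linear factor can be treated by a monodromy argument in the spirit of Proposition~\ref{link 1}, using that the factors of $D_v f$ are permuted by the monodromy of the family as $v$ varies. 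Making the dimension count unconditional is where the singularity hypothesis must be used most carefully, and it is the step I expect to require the most work.
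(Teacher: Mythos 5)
First, note that the paper does not prove this statement at all: it is quoted verbatim from Ilardi's paper \cite{ilardi} and used as a black box, so there is no in-paper argument to compare yours against. Judged on its own terms, your reduction is sound: maximal rank here is injectivity, failure for every $L$ sets up the incidence $\Sigma=\{(v,L): L\mid D_vf\}$, the finite-fiber count over $\mathbb P(P)$ handles the case of dependent partials, and in the independent case everything comes down to showing that the general polar $D_vf$ has no linear factor. But the step you yourself flag as the crux is a genuine gap, and the route you choose for it does not close. The assertion that the corank-$c$ stratum of $\operatorname{Hess}(f)$ has the expected codimension $\binom{c+1}{2}$ is a genericity statement about $f$ that you are not entitled to: the Hessian of a \emph{particular} $f$ can have wildly excess degeneracy loci, and $\det\operatorname{Hess}(f)\equiv 0$ can even occur for non-cones once $n\ge 4$ (Gordan--Noether covers only $n\le 3$, as you hint). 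Nothing in your sketch converts the hypothesis $\dim X_s\le n-3$ into the needed bound on those strata, so $\dim I\le n-1$ is not established and the conclusion ``the general polar is smooth'' is unproved. (It is also stronger than needed, and in fact can fail pointwise along $X_s$.)

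The gap can be closed much more cheaply, using the hypothesis exactly where it belongs. The polars $D_vf$ form the linear system spanned by $\partial_0f,\dots,\partial_nf$, whose base locus is precisely $V(\partial_0f,\dots,\partial_nf)=X_s$ (set-theoretically, by Euler's relation in characteristic zero). By Bertini, the general member $V(D_vf)$ is smooth away from the base locus, so $\dim \operatorname{Sing} V(D_vf)\le \dim X_s\le n-3$. A hypersurface in $\mathbb P^n$ whose singular locus has codimension at least $3$ is defined by an irreducible form: a repeated factor would contribute a singular divisor, and two distinct factors would meet in codimension $2$ inside the singular locus. Since $\deg D_vf=d\ge 2$, the general polar therefore has no linear factor, and your incidence count finishes the proof. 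This replaces the entire Hessian-stratification discussion (and the monodromy patch for a ``moving'' linear factor, which becomes unnecessary) with one application of Bertini plus the codimension-$3$ irreducibility criterion; I would rewrite the second half of your argument along these lines.
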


In \cite{AR}, A. Alzati and R. Re proved this same injectivity for any complete intersection generated by forms of degree $d$, not only for Jacobian ideals.

We  give some  answers to Ilardi's question arising from our work. First we note that our results show that in some cases $R/J(X)$ has the full WLP rather than WLP in a certain degree.

\begin{corollary}
Let $F$ be a smooth hypersurface in $\mathbb P^3$ of degree 3, 4, 5 or 6. Then the Jacobian ideal $J = \langle \frac{\partial F}{\partial w}, \frac{\partial F}{\partial x}, \frac{\partial F}{\partial y}, \frac{\partial F}{\partial z}\rangle$ has the WLP.
\end{corollary}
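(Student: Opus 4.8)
The plan is to reduce immediately to the complete intersection results established earlier, so that essentially no new argument is required. First I would observe that because $F$ is smooth, the four partial derivatives $\frac{\partial F}{\partial w}, \frac{\partial F}{\partial x}, \frac{\partial F}{\partial y}, \frac{\partial F}{\partial z}$ have no common zero in $\mathbb P^3$: for a hypersurface the singular locus is precisely the common vanishing locus of its partials, and smoothness says this locus is empty. Four forms in the four variables of $R = k[w,x,y,z]$ that cut out the empty scheme in $\mathbb P^3$ must form a regular sequence, so $J$ is an artinian complete intersection, exactly as noted in the paragraph introducing Jacobian ideals.

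Next I would carry out the degree bookkeeping. Each partial derivative has degree $\deg F - 1$, so for $\deg F \in \{3,4,5,6\}$ the Jacobian ideal $J$ is a complete intersection of type $(d,d,d,d)$ with $d = \deg F - 1 \in \{2,3,4,5\}$. This places $R/J$ exactly within the range covered by the theorems proved above.

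Finally I would invoke the relevant result in each case: Example \ref{d=2} (from \cite{MN-quadrics}) when $d=2$, Proposition \ref{d=3} when $d=3$, Theorem \ref{d=4} when $d=4$, and Corollary \ref{d=5} when $d=5$. The essential point that makes this work is that each of these statements asserts the WLP for \emph{every} complete intersection of the stated type, not merely for a general one; this is what permits them to apply to the specific complete intersection $R/J$ determined by $F$, which is in no way generic.

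I do not anticipate any genuine obstacle. All of the difficulty has already been absorbed into proving the WLP for arbitrary (rather than generic) equigenerated height four complete intersections with $d \le 5$; here one only has to check the elementary facts that smoothness forces the partials to cut out the empty scheme in $\mathbb P^3$ and that their common degree lands in the required range. If anything subtle were to arise it would be in confirming that the Jacobian ideal of the given \emph{specific} surface is genuinely a complete intersection of the expected type, but this is immediate from smoothness together with Euler's identity, and requires no further work.
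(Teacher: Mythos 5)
Your proposal is correct and follows exactly the route the paper intends: smoothness makes $J$ an artinian complete intersection of type $(d,d,d,d)$ with $d=\deg F-1\in\{2,3,4,5\}$, and the corollary then follows from Example \ref{d=2}, Proposition \ref{d=3}, Theorem \ref{d=4} and Corollary \ref{d=5}, each of which applies to \emph{every} such complete intersection rather than a general one. Nothing further is needed.
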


Using Theorem \ref{d1=d2=d3=d4} we also get an improvement of Ilardi's result in the case of four variables:

\begin{corollary} \label{jacobian result}
Let $X : f = 0$ be a smooth hypersurface in $\mathbb P^3$ of degree $d+1 > 2$. Then the ideal $J(X)$ has the WLP in all degrees $\leq
  \lfloor\frac{3d+1}{2}\rfloor -2$, i.e. $\times \ell : [R/J(X)]_t \rightarrow [R/J(X)]_{t+1}$ is injective for all $t \leq \lfloor \frac{3d+1}{2} \rfloor - 2$.
\end{corollary}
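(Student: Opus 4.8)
The plan is to deduce Corollary~\ref{jacobian result} as an essentially immediate consequence of Theorem~\ref{d1=d2=d3=d4}, after accounting for two bookkeeping points: the passage from a Jacobian ideal to the complete intersection setting of the theorem, and the index shift between the two statements. First I would recall that if $X : f = 0$ is a smooth hypersurface in $\mathbb P^3$ of degree $d+1 > 2$, then, as noted at the start of Subsection~\ref{jacobian ideals}, the partial derivatives $\frac{\partial f}{\partial w}, \frac{\partial f}{\partial x}, \frac{\partial f}{\partial y}, \frac{\partial f}{\partial z}$ all have degree $d$ and, because $X$ is smooth, form a regular sequence. Hence $J(X)$ is a homogeneous complete intersection in $R = k[w,x,y,z]$ generated by four forms of the same degree $d$, so $R/J(X)$ is exactly an algebra of the type studied throughout the paper.

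With this identification, the hypotheses of Theorem~\ref{d1=d2=d3=d4} are met with $A = R/J(X)$ and $I = J(X)$. The theorem then gives that, for a general linear form $\ell$, the multiplication maps $\times \ell \colon [A]_{t-1} \rightarrow [A]_t$ are injective for all $t < \lfloor \frac{3d+1}{2} \rfloor$. The only remaining task is to reconcile the indexing: the theorem is phrased in terms of injectivity of the map \emph{into} degree $t$, whereas the corollary, following Ilardi's convention ``having WLP in degree $t$'' for the map $\times \ell \colon [A]_t \rightarrow [A]_{t+1}$, is phrased in terms of the map \emph{out of} degree $t$. Thus the map out of degree $t$ is the same as the map into degree $t+1$, so injectivity holds precisely when $t + 1 < \lfloor \frac{3d+1}{2} \rfloor$, that is, for all $t \leq \lfloor \frac{3d+1}{2} \rfloor - 2$. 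This is exactly the range claimed.

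There is no genuine obstacle here; the content of the result is entirely contained in Theorem~\ref{d1=d2=d3=d4}, and the corollary is its specialization to Jacobian ideals together with a one-step reindexing. The only point requiring a word of care is the smoothness hypothesis: I would make explicit that smoothness of $X$ is precisely what guarantees that the four partials form a regular sequence (equivalently, that $R/J(X)$ is artinian), which is the standing assumption under which Theorem~\ref{d1=d2=d3=d4} operates. I would close by noting that this improves Ilardi's and Alzati--Re's bound, which guarantees injectivity only up to degree $d-1$, since $\lfloor \frac{3d+1}{2} \rfloor - 2 \geq d - 1$ for all $d \geq 2$, with the gain growing linearly in $d$.
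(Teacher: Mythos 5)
Your proposal is correct and matches the paper's (implicit) argument exactly: the paper likewise obtains this corollary by observing that smoothness makes $J(X)$ an equigenerated complete intersection of type $(d,d,d,d)$ and then applying Theorem~\ref{d1=d2=d3=d4}, with the same one-step reindexing from the map into degree $t$ to the map out of degree $t$.
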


We remark that WLP is equivalent to injectivity for all $t \leq 2d-3$, so Corollary \ref{jacobian result} covers approximately half the range left open by the Ilardi and Alzati-Re result.


\subsection{A result for non-equigenerated complete intersections } \label{not equigenerated subsec}

Even if we prove Conjectures \ref{force exactly one} and \ref{get contra}, it would remain to prove that {\it all} codimension four  complete intersections  (with generators of possibly different degree) have the WLP.
In the last proposition of this paper we deal with complete
intersections of arbitrary degree $d_1,d_2,d_3,d_4$ and from the
stability of the associated syzygy bundle we will deduce the
injectivity in a range unfortunately not as good as the one that we
get when $d_1 = \dots = d_4$ (see Theorem \ref{d1=d2=d3=d4}). However,  it does not assume that the degrees are equal, and  it introduces a different approach.

For the sake of completeness we recall the following result on vector bundles that will be crucial in the proof of Proposition \ref{d1>=d2>=d3>=d4}.

\begin{proposition}[\cite{EHV} Theorem 3.4 (due to Schneider), and \cite{EHV} Theorem 6.1] \label{EHV results}

Let $\mathcal E$ be a normalized (i.e. $-2\le c_1(\mathcal E) \le 0$) rank 3 stable vector bundle on $\mathbb P^3$. The restriction of $\mathcal E$ to a general plane is stable unless one of the following holds:

\begin{enumerate}
\item[(i)] $c_1(\mathcal E) = -2$ and $\mathcal E = T_{\PP^3}(-2) $, where $T_{\PP^3}$ is the tangent bundle on $\PP^3$;

\item[(ii)] $ c_1(\mathcal E) = -1$ and $\mathcal E = \Omega ^1(1)$, where $\Omega = \Omega_{\PP^3}^1$ is the sheaf of K\"ahler differentials;

\item [(iii)] $c_1 (\mathcal E) = 0$ and $c_2(\mathcal E) \le 3$;

\item[(iv)] $c_1(\mathcal E) = 0$ and $\mathcal E = S^2(\mathcal N)$ where $\mathcal N$ is the null correlation bundle and $S^2(\mathcal N)$ is the second symmetric power;

\item[(v)] $c_1(\mathcal E) = 0$ and $\mathcal E$ fits in the exact sequence
\[
0 \rightarrow \Omega (1) \rightarrow \mathcal E  \rightarrow O_{H_0}(-c_2(\mathcal E)+1) \rightarrow 0
\]
for some plane $H_0$ in $\PP^3$.

\end{enumerate}

\end{proposition}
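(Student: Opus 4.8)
The plan is to follow the restriction-theorem machinery of Grauert--M\"ulich, Schneider and Flenner, specialized to the numerically borderline case $(r,n,d) = (3,3,1)$. Since restriction to a plane $H \subset \mathbb{P}^3$ preserves the normalized first Chern class, we have $\mu(\mathcal E|_H) = \mu(\mathcal E)$; hence if $\mathcal E|_H$ fails to be stable for a general $H$ there is a saturated subsheaf $\mathcal F_H \subset \mathcal E|_H$ of rank $1$ or $2$ with $\mu(\mathcal F_H) \geq \mu(\mathcal E)$. The whole argument turns on deciding whether this plane-by-plane destabilization is the shadow of a global subsheaf of $\mathcal E$.

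I would first organize the family $\{\mathcal F_H\}$ on the incidence variety $\mathbb{F} = \{(x,H) : x \in H\} \subset \mathbb{P}^3 \times (\mathbb{P}^3)^*$, with projections $p \colon \mathbb{F} \to \mathbb{P}^3$ and $q \colon \mathbb{F} \to (\mathbb{P}^3)^*$. The destabilizing subsheaves form a bounded family (fixed $\mathcal E$, fixed slope), so their Harder--Narasimhan type is constant over a dense open $U \subset (\mathbb{P}^3)^*$ by semicontinuity of the Shatz stratification, and they glue to a subsheaf $\mathcal G \subset (p^*\mathcal E)|_{q^{-1}(U)}$. The dichotomy is whether $\mathcal G$ descends along $p$: since $p$ is a $\mathbb{P}^2$-bundle and $p^*\mathcal E$ is constant along its fibres, descent holds exactly when the fibre subspace $\mathcal F_H|_x \subset \mathcal E_x$ is independent of the plane $H$ through $x$. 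In that case one recovers a subsheaf $\mathcal F \subset \mathcal E$ with $\mu(\mathcal F) = \mu(\mathcal F_H) \geq \mu(\mathcal E)$, contradicting stability of $\mathcal E$. The exceptional bundles are therefore precisely those for which descent fails.

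To pin down the failures I would restrict one step further to a general line $\ell$. The Grauert--M\"ulich theorem forces the generic splitting type $\mathcal E|_\ell$ to be balanced, while the jump contributed by $\mathcal F_H$ produces a cohomological positivity constraint. Combining these yields a Flenner-type inequality bounding $c_2(\mathcal E)$ in terms of $c_1(\mathcal E)$; the key point is that for $(r,n,d) = (3,3,1)$ the quantity $(\binom{d+n-1}{n-1}-1)/d = 2$ equals the threshold $(r^2-1)/4 = 2$, so Flenner's criterion is satisfied only non-strictly and the destabilizing subsheaves necessarily have slope exactly $\mu(\mathcal E)$. Consequently only a short list of numerical types survives: for $c_1 = -2$ and $c_1 = -1$ a single value of $c_2$ each, and for $c_1 = 0$ the range $c_2 \leq 3$ together with a couple of borderline families. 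Extracting this effective bound cleanly from the Grauert--M\"ulich comparison and eliminating every numerical type not on the list is the step I expect to be the main obstacle.

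Finally, for each surviving numerical type I would identify the bundle by a cohomological characterization. When $c_1 = -2$ the destabilizing datum forces the Euler sequence, giving $\mathcal E = T_{\PP^3}(-2)$; when $c_1 = -1$ the analogous analysis of the cotangent sequence yields $\mathcal E = \Omega^1(1)$; and when $c_1 = 0$ one separates the remaining cases into the generic small-$c_2$ stratum (iii), the symmetric square $S^2(\mathcal N)$ of the null-correlation bundle (iv), and the bundles fitting into the explicit extension $0 \rightarrow \Omega(1) \rightarrow \mathcal E \rightarrow O_{H_0}(-c_2(\mathcal E)+1) \rightarrow 0$ of (v), each recognized from its restriction behaviour and the dimensions of its intermediate cohomology. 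Collecting these identifications yields the stated list (i)--(v) and completes the classification.
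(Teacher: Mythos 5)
This statement is not proved in the paper at all: it is imported verbatim from Ein--Hartshorne--Vogelaar (\cite{EHV}, Theorem 3.4 and Theorem 6.1) and used as a black box in the proof of Proposition \ref{d1>=d2>=d3>=d4}. So the benchmark for a blind proof is the full classification in \cite{EHV}, and your proposal does not reach it. Your opening moves are the correct frame --- organizing the destabilizing subsheaves over the incidence variety, the relative Harder--Narasimhan filtration over a Shatz stratum, the descent dichotomy, and the observation that $(r,n,d)=(3,3,1)$ sits exactly on the borderline where $(r^2-1)/4 = 2$ equals the numerical threshold, so any destabilization of $\mathcal E|_H$ must occur at slope exactly $\mu(\mathcal E)$ --- but everything that makes the theorem a theorem is deferred. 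You say yourself that ``extracting this effective bound \dots and eliminating every numerical type not on the list is the step I expect to be the main obstacle'': that step \emph{is} the content of \cite{EHV}. Concretely, nothing in your sketch produces the bound $c_2(\mathcal E)\le 3$ when $c_1 = 0$, nor the exact sequence $0 \to \Omega(1) \to \mathcal E \to \mathcal O_{H_0}(-c_2+1) \to 0$ of case (v), which in \cite{EHV} is \emph{constructed} from a family of destabilizing line subsheaves concentrated on jumping planes, not read off from a splitting-type inequality. Likewise ``the destabilizing datum forces the Euler sequence'' and the identification of $S^2(\mathcal N)$ are assertions, not arguments; in \cite{EHV} they require a separate analysis of rank $1$ versus rank $2$ destabilizing subsheaves, vanishing results, and a cohomological characterization of the exceptional bundles.

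Two further technical problems would surface even if you pursued the outline. First, the appeal to Flenner's restriction theorem is both anachronistic (it postdates \cite{EHV}) and ineffective here: Flenner's criterion requires a strict inequality, and at the borderline case it yields no conclusion at all, so the ``Flenner-type inequality bounding $c_2$'' cannot be extracted from it --- one must do the case analysis by hand, which is exactly what Ein--Hartshorne--Vogelaar do. Second, your descent criterion (``descent holds exactly when $\mathcal F_H|_x \subset \mathcal E_x$ is independent of $H$ through $x$'') is only correct on the locus where $\mathcal F_H$ is a subbundle; the destabilizing subsheaves are merely saturated, and handling their singular loci across the family is a genuine issue in Mehta--Ramanathan-type arguments, not a formality. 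As it stands, the proposal is a plausible roadmap for reading \cite{EHV}, not a proof; since the paper's authors deliberately cite the result rather than reprove it, the honest options are to do the same or to supply the missing classification in full.
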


\begin{proposition} \label{d1>=d2>=d3>=d4} Let $A = R/I = R/\langle
  F_1,F_2,F_3,F_4 \rangle$ where $I$ is a complete intersection and
  $\deg F_i = d_i$. Set $d_1+d_2+d_3+d_4=3\lambda +r$, $0\le
  r\le2$. Let $L$ be a general linear form. Then the multiplication
  maps $\times L \colon [A]_{t-1}\rightarrow[A]_t$ are injective
  for $t< \lambda$.
\end{proposition}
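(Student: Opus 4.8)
The plan is to realize $A$ as the cohomology of a syzygy bundle on $\PP^3$ and to read off the injectivity from the (semi)stability of its restriction to a general plane. Since $R/I$ is artinian, the forms $F_1,\dots,F_4$ have no common zero in $\PP^3$, so the map $(F_1,\dots,F_4)\colon\bigoplus_{i=1}^4\mathcal O_{\PP^3}(-d_i)\to\mathcal O_{\PP^3}$ is surjective and its kernel is a rank $3$ vector bundle $E$ with $c_1(E)=-(d_1+d_2+d_3+d_4)=-(3\lambda+r)$. Twisting the defining sequence by $\mathcal O(t)$ and using $H^1(\mathcal O_{\PP^3}(j))=0$, one gets $H^1(E(t))\cong [R]_t/[I]_t=[A]_t$, and under this identification the multiplication $\times L$ on $A$ is induced by $\times L\colon E(t-1)\to E(t)$. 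I would normalize by setting $\mathcal E=E(\lambda)$, so that $c_1(\mathcal E)=-r$ with $0\le r\le 2$, i.e. $-2\le c_1(\mathcal E)\le 0$, which is exactly the normalization required by Proposition \ref{EHV results}.

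Next I would convert injectivity into a vanishing statement. Writing $H=V(L)$ and $E_H=E|_H$, the restriction sequence $0\to E(t-1)\xrightarrow{\times L}E(t)\to E_H(t)\to 0$ gives in cohomology
\[
H^0(E(t))\to H^0(E_H(t))\xrightarrow{\ \delta\ }H^1(E(t-1))\xrightarrow{\times L}H^1(E(t)),
\]
so the kernel of $\times L\colon[A]_{t-1}\to[A]_t$ is identified with $\operatorname{coker}\big(H^0(E(t))\to H^0(E_H(t))\big)$; in particular $H^0(E_H(t))=0$ already forces injectivity. Now $E_H$ is a rank $3$ bundle on $H\cong\PP^2$ with slope $\mu(E_H)=-(3\lambda+r)/3=-\lambda-r/3$. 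A nonzero section of $E_H(t)$ produces a sub-line-bundle of $E_H$ of slope at least $-t$, so if $E_H$ is semistable then $-t\le\mu(E_H)$, i.e. $t\ge\lambda+r/3$. Hence for every $t<\lambda$ one obtains $H^0(E_H(t))=0$ and therefore the desired injectivity. Thus the whole statement reduces to the semistability of the restriction $E_H$ to a general plane.

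To secure that semistability I would first check that the normalized syzygy bundle $\mathcal E$ is stable on $\PP^3$, and then invoke the restriction theorem: by Proposition \ref{EHV results}, the restriction of a normalized stable rank $3$ bundle to a general plane is again stable (hence semistable) unless $\mathcal E$ is one of the exceptional bundles listed there. So two components remain. First, stability of $\mathcal E$, which amounts to bounding the slopes of rank $1$ and rank $2$ subsheaves: the rank $1$ case reduces to analyzing the lowest-degree syzygies among $F_1,\dots,F_4$ (the minimal ones being Koszul, of degree $d_i+d_j$), and the rank $2$ case is handled by passing to $E^\vee$, which sits in $0\to\mathcal O\to\bigoplus_i\mathcal O(d_i)\to E^\vee\to 0$. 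Second, exclusion of the list (i)--(v), which is essentially a Chern-class computation: from $c(E)=\prod_{i=1}^4(1-d_i h)$ one reads off $c_2(\mathcal E)$ and the Hilbert polynomial, which distinguish $\mathcal E$ from $T_{\PP^3}(-2)$, $\Omega^1(1)$, $S^2(\mathcal N)$ and from the case $c_2(\mathcal E)\le 3$ in all but a few small-degree configurations, which can then be inspected by hand (and the very smallest, such as all $d_i=1$, are trivial because $A$ collapses to an algebra in few variables).

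I expect the crux to be precisely this stability-plus-exceptional-case analysis. The clean slope bound of the second paragraph closes the argument the instant $E_H$ is known to be semistable, so all the real work lives in guaranteeing that the syzygy bundle restricts well; this is also where the balance of the degrees enters most delicately, since unbalanced Koszul syzygies are exactly what threatens stability. Once that is in hand, one obtains injectivity of $\times L\colon[A]_{t-1}\to[A]_t$ for all $t<\lambda$, a range that makes no use of the equigeneration hypothesis and still improves on the Alzati--Re result of \cite{AR}.
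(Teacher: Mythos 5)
Your overall strategy is the same as the paper's: realize $A$ as $H^1_*$ of the rank $3$ syzygy bundle, reduce injectivity of $\times L$ to the vanishing $H^0(E_{|H}(t))=0$ for $t<\lambda$, obtain that vanishing from semistability of $E_{|H}$, and get the latter from stability of $E$ on $\PP^3$ plus the restriction theorem of Proposition \ref{EHV results}, ruling out the exceptional bundles by Chern class and cohomology computations. The slope argument in your second paragraph is correct, and your plan for excluding the exceptions, while vaguer than the paper's explicit computations, is in the right spirit.

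There is, however, a genuine gap: the syzygy bundle is \emph{not} always stable, and your proposal has no fallback when it is not. If $d_4=\max_i d_i$ satisfies $d_4\ge \tfrac{1}{3}\sum d_i=\mu(E^\vee)$, then the composite $\mathcal O(d_4)\hookrightarrow\bigoplus_i\mathcal O(d_i)\to E^\vee$ is injective and destabilizes $E^\vee$ (equivalently, the Koszul syzygies among $F_1,F_2,F_3$ alone give a destabilizing rank $2$ quotient of $E$); for instance $(d_1,d_2,d_3,d_4)=(2,2,2,10)$. You flag ``unbalanced Koszul syzygies'' as a threat at the end, but you present stability as something to be verified rather than something that can actually fail, so your program cannot be completed as stated. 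The paper resolves this with a case split: if $\tfrac{1}{3}\sum d_i\le d_4$ then $\lambda\le d_4$, and for $t<\lambda$ one has $[I]_t=[\langle F_1,F_2,F_3\rangle]_t$, so $[A]_{t-1}\to[A]_t$ is the multiplication map on the positive-dimensional Cohen--Macaulay ring $R/\langle F_1,F_2,F_3\rangle$, where a general linear form is a nonzerodivisor and injectivity is automatic; only in the complementary (balanced) case $\tfrac{1}{3}\sum d_i>d_4$ does it run the bundle argument, and there the hypothesis $\mu(E^\vee)>\max_i d_i$ is exactly what makes the Bohnhorst--Spindler criterion apply to give stability of $E^\vee$, hence of $E$. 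You should add this dichotomy; once you do, the rest of your outline matches the paper's proof.
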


\begin{proof}
  We will assume that $d_1\le d_2\le d_3\le d_4$.  We distinguish two
  cases.
  \begin{enumerate}
  \item If $\frac{d_1+d_2+d_3+d_4}{3}\le d_4$, then $d_1 + d_2+d_3+d_4
    = 3 \lambda + r \leq 3d_4$, so $\lambda \leq d_4$. If $t <
    \lambda$ then $[A]_{t-1}$ and $[A]_t$ coincide with the corresponding components of the coordinate
    ring of a complete intersection of positive Krull dimension, so
    the result is obvious.
  \item Assume that $\frac{d_1+d_2+d_3+d_4}{3}> d_4$. Consider the
    syzygy bundle
    \[
    {\mathcal E}:= \ker \left (\bigoplus _{i=1}^4{\mathcal O}_{\PP^3}(-d_i)
    \stackrel{(F_1,F_2,F_3,F_4)}{\longrightarrow} {\mathcal
      O}_{\PP^3} \right )
    \]
    associated to $(F_1,F_2,F_3,F_4) $.  ${\mathcal E}$ is a rank 3
    vector bundle on $\PP^3$ with $c_1( {\mathcal E})=-(
    d_1+d_2+d_3+d_4)$. Note that $H^1_*(\mathcal E) = \bigoplus_{t \in \mathbb Z} H^1(\mathcal E(t)) \cong A$ (cf.\ \cite{BK} Proposition 2.1, although this was already used implicitly in \cite{HMNW} Theorem 2.3).  Let us check that ${\mathcal E}$ is $\mu
    $-stable. To this end, we consider the exact sequence
    \[
    0 \longrightarrow {\mathcal O}_{\PP^3} \longrightarrow \bigoplus
    _{i=1}^4{\mathcal O}_{\PP^3}(d_i) \longrightarrow {\mathcal
      E}^\ast\longrightarrow 0.
    \]
    By hypothesis we have
    \[
    \mu({\mathcal E}^\ast):=\frac{c_1({\mathcal E}^\ast)}{rk({\mathcal
        E}^\ast)}=\frac{\sum_{i=1}^4d_i}{3}> \max \{d_i \}=d_4.
    \]
    So we can apply \cite{bs} Corollary 2.7, and conclude that
    ${\mathcal E}^\ast$ is $\mu $-stable. Since, $\mu$-stability is
    preserved under dualizing we also have that ${\mathcal E}$ is $\mu
    $-stable.

    Now we want to consider the restriction to a general plane $H$. We claim that by Proposition \ref{EHV results}, the restriction
    ${\mathcal E}_{|H}$ of ${\mathcal E}$ to  $H\subset
    \PP^3$ is also $\mu$-stable.
This is because  our rank 3 vector  bundle  is not one of the few exceptions listed in that result. Indeed,  recall that  $H^1_*(\mathcal E)$ is isomorphic to our artinian algebra $R/I$. The non-zero summands of $R/I$ go from the homogeneous part of degree zero until the homogeneous part of degree $d_1+d_2+d_3+d_4-4$ (assuming $d_i\ge 2$; if one is smaller than 2 we  are dealing with a complete intersection in 3 variables and the result is known). Moreover we know exactly the dimension of the homogeneous part of degree $i$, for $0\le i\le d_1+..+d_4-4$. They are $(1,4,h_2,\dots,h_2,4,1)$.

We claim that our vector bundle $\mathcal E$ is not one of the exceptions listed in Proposition \ref{EHV results}. Exception (i) has $H^1_*(\mathcal E)=0$, while (ii) has $H^1_*(\mathcal E)$ concentrated in only {\it one} degree. So in neither case do we have $H^1_*(\mathcal E) \cong R/I$ (up to twist).

For the remaining three exceptions, (v) corresponds  again to a bundle $\mathcal E$ (or $\mathcal E^*$) with $H^1_*(\mathcal E)$ concentrated in only {\it one} degree and all the others are 0,  which is not our case  -- indeed, only $H^1 \mathcal E(-1)\ne 0$.

Exception  (iv) corresponds to the second symmetric power of the null correlation bundle $\mathcal N$. We claim that again is not our case because the cohomology satisfies  $\dim H^1(S^2(\mathcal N)(-2))=1$, $\dim H^1(S^2(\mathcal N)(-1))= 4$ and $\dim H^1(S^2(\mathcal N))=5$, and this cannot be the start of the Hilbert function of a complete intersection. Indeed, the null correlation bundle is a rank 2 vector bundle $\mathcal N$ on $\mathbb P^3$ with $c_1(\mathcal N) = 0$. Therefore we have an exact sequence
\[
0 \rightarrow \bigwedge^2 \mathcal N \rightarrow \mathcal N \otimes \mathcal N \rightarrow S^2 (\mathcal N) \rightarrow 0,
\]
and $\bigwedge^2 \mathcal N \cong \mathcal O_{\mathbb P^3}(c_1 (\mathcal N)) \cong \mathcal O_{\mathbb P^3} $. We deduce that $H^1 ((\mathcal N \otimes \mathcal N) (t)) \cong H^1 (S^2 (\mathcal N)(t))$ for all $t \in \mathbb Z$, from which the result follows from a calculation.

Finally, we consider exception (iii).  This corresponds to $c_1=0$ and $c_2=3$ and again we claim this is not our case. Indeed, assume without loss of generality  that $2 \le d_1 \le d_2 \le d_3 \le d_4$. We know that $c_1 (\mathcal E) = -(d_1+d_2+d_3+d_4)$ and 
\begin{equation} \label{c2}
c_2(\mathcal E) = d_1d_2+d_1d_3+...+d_3d_4.
\end{equation}

We have $c_1(\mathcal E_{norm}) = 0$ if and only if $c_1(\mathcal E) \equiv 0$ (mod 3). Thus we can write
\[
-(d_1 + d_2 + d_3 + d_4) = -3p
\]
and in particular  $-(d_1+d_2+d_3+d_4) \equiv 0 \ (~\hbox{mod } 3)$.
At the beginning of this proof we divided into two cases, and the  current case is $\frac{d_1+d_2+d_3+d_4}{3}> d_4$. Thus $\frac{d_1+d_2+d_3+d_4}{3} \geq d_4+1$, i.e. 
\begin{equation} \label{other one}
d_1 + d_2 + d_3 \ge 2d_4 + 3.
\end{equation}

Since $p = (d_1+d_2+d_3+d_4)/3 $ we have
$c_1(\mathcal E_{norm}) = c_1 (\mathcal E(p)) = 0$   and
\[
c_2(\mathcal E_{norm}) = c_2 (\mathcal E(p)) = c_2(\mathcal E) - 3p^2 = c_2(\mathcal E) - \frac{1}{3} \left (d_1^2 + d_2^2 + d_3^2 + d_4^2 + 2 \cdot \sum_{1 \leq i<j \leq 4} d_i d_j \right ).
\]
We will use  (\ref{c2}) and (\ref{other one}), as well as the following inequalities:
\[
d_1 d_2 \geq d_1^2, \ d_2 d_3 \geq d_2^2, \ d_4^2 \geq d_3^2.
\]
Thus
\[
\begin{array}{lcl}
c_2(\mathcal E_{norm}) & = & \displaystyle \frac{1}{3} \left [ \left (\sum_{1 \leq i < j \leq 4} d_i d_j \right ) - d_1^2 - d_2^2 - d_3^2 - d_4^2 \right ] \\
& = & \displaystyle  \frac{1}{3} \left [ d_4 (d_1 + d_2 + d_3) + \left ( \sum_{1 \leq i < j \leq 3} d_i d_j \right )  - d_1^2 - d_2^2 - d_3^2 - d_4^2 \right ]  \\ \vspace{.05in}
& \geq &  \frac{1}{3} \left [ d_4 (2d_4+3)  + d_1d_2 + d_1 d_3 + d_2 d_3 - d_1^2 - d_2^2 -d_3^2 - d_4^2    \right ] \\ \vspace{.05in}
& \geq &  \frac{1}{3} (3d_4 + d_1 d_3) \\
& \geq &  \frac{1}{3} (3 \cdot 2 + 2 \cdot 2) > 3.
\end{array}
\]
Thus our vector bundle $\mathcal E$ does not fall into any of the exceptions listed in Proposition \ref{EHV results}, and $\mathcal E_{|H}$ is stable for a general plane $H$.
In particular, we have $H^0(H, {\mathcal E}_{|H}(t ))=0$ for all $t<\lambda$. Looking at the long exact sequence in cohomology of the exact sequence
\[
0 \rightarrow \mathcal E(t-1) \stackrel{\times L}{\longrightarrow} \mathcal E(t) \rightarrow \mathcal E_{|H}(t) \rightarrow 0
\]
we see that if $L$ is a linear form defining $H$ then $\times  L : [A]_{t-1} \rightarrow [A]_t$ is injective for all $t<\lambda$.
\end{enumerate}
\end{proof}

\section{Final comments and questions}

\begin{enumerate}

\item The most obvious open question is whether the WLP holds for arbitrary complete intersections in arbitrarily many variables. This is the Holy Grail of this line of investigation. So far it is known in two or three variables (\cite{HMNW}), the complete intersection of at most four quadrics (\cite{MN-quadrics}) and now we have results for complete intersection of forms of the same degree $d\le 5$ in four variables. It might be profitable to apply the methods of this paper to complete intersections generated by six forms of the same degree $d$ in six variables. That means studying the union of two complete intersection surfaces in $\mathbb P^5$, whose general hyperplane section is the union of two complete intersection curves in $\mathbb P^4$.  Of course such a union in $\mathbb P^4$ is not ACM, as was the case for the hyperplane sections in this paper.

\vspace{.1in}

\item Once we show that all complete intersections have the WLP, it remains to show that they all have the SLP. This is open even in codimension 3, but it is known in codimension 2 \cite{HMNW}. It is worth noting that results of Dimca, Gondim and Ilardi \cite{DGI} give SLP for the Jacobian ring of a smooth cubic surface in $\mathbb P^3$ and of a smooth quartic curve in $\mathbb P^2$.
Also,  a result of Bricalli, Favale and Pirola \cite{BFP} gives SLP for the complete intersection of five forms of degree 2 in $k[x_0,\dots,x_4]$; in particular, the Jacobian ring of a smooth cubic 3-fold in $\mathbb P^4$ has the SLP. 
\end{enumerate}


\end{document}